\definecolor{battleshipgrey}{rgb}{0.52, 0.52, 0.51} %see http://latexcolor.com
\theoremstyle{plain}
\newtheorem{theorem}{Theorem}[section]
\newtheorem{lemma}[theorem]{Lemma}
\newtheorem{definition}[theorem]{Definition}
\theoremstyle{remark}
\newtheorem{remark}{Remark}[section]
\newtheorem{example}{Example}[section]
\newtheorem*{notation}{Notation}
\newtheorem*{acknowledgment}{Acknowledgment}
\numberwithin{equation}{section}
\newcommand{\bA}{\mathbb{A}}
\newcommand{\K}{\mathbb{K}}
\newcommand{\R}{\mathbb{R}}
\newcommand{\N}{\mathbb{N}}
\newcommand{\cP}{{\mathcal P}}
\newcommand{\sss}{\mathbf{s}}
\newcommand{\ttt}{\mathbf{t}}
\newcommand{\vvv}{\mathbf{v}}
\newcommand{\www}{\mathbf{w}}
\newcommand{\eff}{\mathbf{f}}
\newcommand{\bff}{\mathbf{f}}
\newcommand{\bfg}{\mathbf{g}}
\newcommand{\bfU}{\mathbf{U}}
\newcommand{\sfQ}{\mathsf{Q}} % quotient 
\newcommand{\sfP}{\mathsf{P}} % (Platzhalter)
\newcommand{\sfG}{\mathsf{G}} % groupoid 
\newcommand{\SfG}{{\mathsf{Gfi}}} % finite part groupoid (???)
\newcommand{\sfGfi}{{\mathsf{Gfi}}} % finite part groupoid 
\newcommand{\sfGsy}{\mathsf{Gsy}} % symmetric groupoid
\newcommand{\sfGsi}{\mathsf{Gsi}} % simplicial groupoid  
\newcommand{\sfT}{\mathsf{T}}
\newcommand{\pG}{\mathsf{PG}}
\newcommand{\sfA}{\mathsf{A}} % action
\newcommand{\sfn}{\mathsf{n}}
\newcommand{\sfk}{\mathsf{k}}
\newcommand{\sfone}{\mathsf{1}}
\newcommand{\sftwo}{\mathsf{2}}
\newcommand{\sfthree}{\mathsf{3}}
\newcommand{\sffour}{\mathsf{4}}
\newcommand{\sym}{\mathrm{sym}}
\newcommand{\id}{\mathrm{id}}
\newcommand{\sett}[1]{{\{ #1 \}}}
\newcommand{\settt}[1]{{\overline{\{ #1 \}}}}
\newcommand{\Bigsetof}[2]{\begin{Bmatrix} #1 \,\Big|\, #2 \end{Bmatrix}}
\newcommand{\msk}{\medskip}
\newcommand{\ssk}{\smallskip}
\newcommand{\nin}{\noindent}
\newcommand{\ul}{\underline}
\newcommand{\ol}{\overline}
\begin{document}

\title[Conceptual Differential Calculus. II]{Conceptual Differential Calculus
\\
Part II: Cubic higher order calculus}
%Part II: Higher order local linear algebra}
%Part II: The structure of higher order calculus}

\author{Wolfgang Bertram}

\address{Institut \'{E}lie Cartan de Lorraine \\
Universit\'{e} de Lorraine at Nancy, CNRS, INRIA \\
Boulevard des Aiguillettes, B.P. 239 \\
F-54506 Vand\oe{}uvre-l\`{e}s-Nancy, France}

\email{\url{wolfgang.bertram@univ-lorraine.fr}}

\subjclass[2010]{14L10       %	Group varieties
14A20  	%Algebraic Geometry : Generalizations (algebraic spaces, stacks)
16L99  	%Local Ringes : None of the above, but in this section [no Weil algebra !]
18F15  	%Abstract manifolds and fiber bundles [See also 55Rxx, 57Pxx]
22E65  	%Infinite-dimensional Lie groups and their Lie algebras: general properties [See also 17B65, 58B25, 58H05]
51K10  	%Synthetic differential geometry (in : distance geometry ...)
58A03  	%Topos-theoretic approach to differentiable manifolds
%58A32  	%Natural bundles
}

\keywords{differential calculus, double and
n-fold groupoids and categories, cat rule, cubic calculus, scaleoid,
synthetic differential geometry,
}

\begin{abstract} 
Following the programme set out in 
\href{http://arxiv.org/abs/1503.04623}{Part I} of this work, we develop a {\em conceptual higher order differential calculus}.
The ``local linear algebra'' defined in Part I is generalized by ``higher order local linear algebra''.
The underlying combinatorial object of such higher algebra is
 the {\em natural $n$-dimensional hypercube}, and so we qualify this calculus as ``cubic''. More precisely, we define two
 versions of conceptual cubic calculus: ``full'' and ``symmetric cubic''.   
The theory thus initiated  
 sheds new light on several foundational issues.
  \end{abstract}

\maketitle

\setcounter{tocdepth}{1}

\tableofcontents

\section{Introduction}

\subsection{``Die Gesetze des Endlichen''}
In preceding work  (\cite{Be15, Be08}), I quoted the following phrase by G.W.\ Leibniz:
``Die Gesetze des Endlichen gelten im Unendlichen weiter''
(``the rules of the finite continue to hold in the infinitesimal''), but I should admit that I really started to understand its meaning while
working on the results to be presented here. When
teaching differential calculus, we usually hasten to take the limit --  ``the infinitesimal'' --  and don't ask ourselves what
``the rules of the finite''  are.
Implicitly, we take for granted that these rules are kind of trivial and known by everybody.

\ssk
In the end, it may well appear that these ``rules of the finite'' look trivial; but they are certainly not known by everybody. 
They are trivial in the sense that they ``just'' deal with iterated directed products  --
but  to say that, we need the subtle concept of $n$-fold groupoid. I guess that such  structures
underly huge parts of mathematics. 
Some of the following is still tentative, and {\em notation} is an important issue: 
I am not yet sure to have found the best one. For these and other reasons,
 the present version of this work will not be submitted for publication in a mathematical journal. 

\ssk
I will start by introducing three different kinds of higher order calculus, then explain  what their intrinsic, or conceptual, version is,
and finally say some words  about the relation between ``the finite'' and ``the infinite''.
Let me stress once again  that this is far from being a complete theory, but rather a first picture of the tip of an iceberg 
whose shape is still unknown, and the present work would have reached its goal if the reader felt that something interesting and new
is going on here. 
% I have the impression that we will get a new and most natural foundation of what one might called
% Charles Ehresmann's vision of geometry and algebra. 

\subsection{The threefold way of higher order calculus}\label{sec:threefold}
The {\em  first order conceptual calculus} developed in \cite{Be15} (quoted as Part I) is based on a groupoid interpretation of the {\em first order difference
quotient map}, or {\em slope}, of a map $f:U \to W$,
\begin{equation} \label{eqn:01}
f^{[1]} (x,v,t) = \frac{f(x+tv) - f(x)}{t}.
\end{equation}
Invertible scalars $t$ represent ``das Endliche'' and non-invertible scalars $t$ represent ``das Unendliche'' (with $t=0$
as most singular value). 
Recall that, in a topological context (cf.\ Part I),
 $f$ is called {\em of class $C^1$} if  the slope $f^{[1]}$ extends to a {\em continuous map} (still denoted by $f^{[1]}$), for the value $t=0$ included.
At higher order, there are three different ways to iterate this procedure (Section \ref{sec:threefold}): 

\begin{itemize}
\item
 {\em symmetric cubic calculus} (closest to usual calculus),
\item
 {\em simplicial calculus} (closer to the spirit of algebraic geometry),
% (but now $r_k:= s_k - s_{k-1}$, $s_k = s_0 +\ldots + s_k$)
\item
{\em full cubic calculus} (which includes both preceding approaches).
\end{itemize}

\ssk \nin
Briefly, {\em full cubic calculus}, as developed in \cite{BGN04}, consists of iterating the procedure in the ``brutal way'', that is, by
looking at higher order slopes $f^{[n]}$ defined by $f^{[2]} := (f^{[1]})^{[1]}$, and so on. These maps
are very hard to understand, so let's
 propose 
a ``tamed''  version of the iteration procedure: fix a scalar $t$ and consider the map 
\begin{equation}
f^{[1]}_t: (x,v) \mapsto f^{[1]} (x,v,t) ,
\end{equation}
Iterating this, we  define $f^{[2]}_{t_1,t_2} := (f_{t_1}^{[1]})_{t_2}^{[1]}$, and so on: 
at each step, we fix a scalar $t_k$, take  the slope with this fixed value of scalar, and then look what happens when  the
multi-scalar $(t_1,\ldots, t_n)$ tends to zero (Section \ref{subsec:symmetric}).  In fact, 
this amounts to consider  a sub-structure of the ``full'' $f^{[n]}$, which turns out to
contain already ``most'' of the information of the full map, and  having the advantage that it is much easier understandable
(Theorem \ref{th:symmetricformula}).
In particular, there is a natural action of the symmetric group $S_n$ preserving the structure (``Schwarz' theorem''), 
whence the term {\em symmetric} cubic calculus. 
However,  for proving the general Taylor formula from \cite{BGN04}, 
the symmetric cubic setting is not sufficient, 
 and for this reason we have developed in \cite{Be13} a
{\em simplicial differential calculus}.
% Via some rather subtle  ``diagonal imbedding'', this calculus can be imbedded into the full structure.
  It should be possible and important (in particular, in order to relate the present work to approaches 
used in algebraic geometry) to develop a {\em conceptual and categorical
formulation of simplicial calculus} -- however,  to keep the present work in reasonable bounds,  this is left for subsequent work.

\subsection{Conceptual version: the three symbols $\sfG^\sfn$, $\sfGsy^\sfn$, $\sfGsi^\sfn$}
I have chosen the sans serif letter $\sfG$, reminding the term {\em groupoid},  combined with 
$\sfn$ standing for the set $\sfn = \{ 1,\ldots, n \}$, to denote  certain {\em $n$-fold groupoids},
which are the conceptual objects corresponding to the three calculi.
More precisely, 

\begin{itemize}
\item
$\sfG^\sfn U$ is the {\em full cubic $n$-fold groupoid} of the domain $U$,
\item
$\sfGsy^\sfn U$ is the {\em symmetric $n$-fold groupoid} of $U$,
% conceptual version of the symmetric slope map $(\ttt,\vvv) \mapsto f_\ttt (\vvv)$,
\item
 $\sfGsi^\sfn U$ will denote a simplicial (pre)groupoid version of the preceding. 
\end{itemize}

\nin 
Before going ahead, I should reassure the reader who is not a specialist in category theory and who never before met  $n$-fold groupoids
and categories: I have been in the same situation, and so the present text is also designed to introduce a newcomer to these gadgets.
 In this respect, I hope that the presentation given
here (Appendices \ref{sec:hypercube} and \ref{App:nfoldCats}) may be useful: 
{\em small $n$-fold categories} and {\em groupoids} are presented as ``algebraic structures'' in the usual sense, defined by sets, structure maps
and identities, so a small $n$-fold category is given by 

\begin{itemize}
\item
a familiy of sets $C_\alpha$, one for each {\em vertex} $\alpha$ of the natural hypercube 
$\cP(\sfn)$,
\item
a family of small categories, one for each {\em edge} $(\beta,\alpha)$ of the hypercube $\cP(\sfn)$,
\item
such that for each {\em face} of the hypercube these data define a small double category (as defined in Part I, 
appendix C).\footnote{ After having written this text, 
I found an essentially equivalent presentation, designed for category theorists,
 in Section 2 of  \cite{FP10}. See also \cite{GM12}, where a similar description is
given  for {\em $n$-fold vector bundles}, using a quite fancy
terminology  of ``hops'' and ``runs'',
cf.\ Th.\  \ref{th:tangentbundle}.}
\end{itemize}

\nin
Seen that way, small $n$-fold categories are computable:  for $\sfGsy^\sfn U$ formulae are
explicit and fairly simple (Theorem \ref{th:Sym2}), whereas for $\sfG^\sfn U$ they are far more complicated 
 -- we make some effort to present them in algorithmic form, that could be programmed  on a machine 
(Theorems
 \ref{th:fullcompute},  \ref{th:fullcompute1},  \ref{th:fullcompute2}).

\ssk
Having said this, I hope the general reader is willing to accept that the iteration procedure, leading from first order to higher order conceptual
calculus, is entirely ``canonical'', that is, following from general principles; once the concepts are clear, most proofs are ``by straightforward  induction'':
for $n=1$, all three groupoids coincide with the groupoid $U^\sett{1}$ defined in Part I, so,
if $\sfQ$ is one of the symbols $\sfG,\sfGsy$, then
$\sfQ^\sfone U = U^\sett{1}$. 
For $n>1$, to compute structure data of $\sfQ^\sfn U$, we just have to apply a copy of $\sfQ^\sfone$ to the data of 
$\sfQ^{\mathsf{n-1}} U$.
However, since the object set of $\sfG^\sfone U$ is a cartesian product
$U \times \K$, we need to know how $\sfQ$ behaves with respect to cartesian
products and general pullbacks. 
At this stage, the difference between the three calculi comes in:  the key notion to understand this is the one of {\em scaleoid}.

\subsection{Scaleoids, pullbacks and cartesian products}
By  ``terminal object'', 
we mean the zero-subset $\{ 0 \}$ of the zero $\K$-module $\{ 0 \}$; we just denote it by $0$. 
The $n$-fold groupoid $\sfQ^\sfn 0$ is called the 
{\it $n$-th order scaleoid}. More precisely:

\begin{itemize}
\item
in symmetric calculus, $\sfGsy^\sfn 0 \cong \K^n$, and since variables $t_k$ are kept constant at each iteration step, the groupoid structure
here is the trivial one,
\item
in full calculus, the top vertex of $\sfG^\sfn 0$ is $\K^{2^n-1}$, and the $n$-fold groupoid structure is non-trivial and rather complicated.
%\item
%in simplicial calculus, points have non-trivial structure, but we have to speak about {\em pregroupoids} and not about groupoids (subsequent work).
\end{itemize}

\nin
To relate scaleoids with cartesian products, a basic property of our symbols $\sfQ^\sfn$ is that {\em they are compatible with pullbacks}
(see Appendix \ref{app:pullback}): they satisfy the rule
\begin{equation}
\sfQ^\sfn (A \times_C B) = \sfQ^\sfn A \times_{\sfQ^\sfn C} \sfG^\sfn B.
\end{equation}
The usual cartesian product $A\times B$ is the pullback over a point $A \times_0 B$, whence
\begin{equation}
\sfQ^\sfn (A \times  B) = \sfQ^\sfn A \times_{\sfQ^\sfn 0} \sfG^\sfn B.
\end{equation}
Now, in symmetric calculus, the groupoid $\sfGsy^\sfn 0$ is $\K^n$ with trivial structure, 
so we can fix scalars,
get an $n$-fold groupoid $\sfGsy^\sfn_\ttt U$ for fixed $\ttt$
 and use the preceding relation in the form
\begin{equation}
\sfGsy_\ttt^\sfn (A \times B) =  \sfGsy_\ttt^\sfn  A \times \sfGsy_\ttt^\sfn B ,
\end{equation}
expressing that  $\sfGsy^\sfn_\ttt$ is a {\em product preserving functor}, which is a 
well-established concept in differential geometry (see \cite{KMS93}). 
In particular,  this relation implies that the philosophy of {\em scalar extension}, known
from algebraic geometry  and used in our preceding work \cite{Be08, BeS14, Be14}, wholely applies in the present context. 
In full cubic calculus, since $\sfG^n 0$ is highly non-trivial, computations cannot be untied so easily, and their description, though entirely
explicit, is a bit tedious (Theorem  \ref{th:fullcompute}).
% in simplicial calculus, the pregroupoid structure on $\sfGsi^\sfn 0$ is non-trivial {\bf ??? not yet clear !}: it encodes the 
%{\em neighbor relation} which plays a very important role in Kock's description \cite{Ko10} of synthetic differential 
% calculus (and which made a rather mysterious show-up in \cite{Be08} via the ``shift-operators'' acting on $T^k M$)
 % and it even should encode the groupoid structure of $M^\sett{1}$ !
Certainly, the abstract algebraic structure of the full scaleoid $\sfG^n 0$ deserves a more profound study in its own right.  
 
\subsection{Iterated pair groupoids, and the rules of the finite}
We consider the $n$-fold groupoid $\sfG^\sfn U$ as an {\em $n$-fold magnification of the ``space'' $U$}. It is a kind of $n$-th
order version of {\em Connes' tangent groupoid} (cf.\ Part I, Theorem 2.7), realizing an interpolation between the
$n$-th order tangent bundle and the $n$-fold pair groupoid: recall that
applying the natural iteration procedure to the pair groupoid $M \times M$ over a set $M$, one gets the
{\em $n$-fold pair groupoid} $\pG^\sfn M$  (Appendix \ref{app:pg}).
For $\ttt = (1,\ldots,1)$, 
and more generally for $\ttt = (t_1,\ldots,t_n)$ such that each $t_i$ is invertible, we have a natural isomorphism of 
$n$-fold groupoids
\begin{equation}
\sfGsy_\ttt ^\sfn U \cong \pG^\sfn U.
\end{equation}
In full calculus, the situation is similar but more subtle  (Theorem \ref{th:Full1}).
Thus one may say 
that  ``the $n$-fold pair groupoid governs the rules of the finite''.
On the ``infinitesimal side'',
for $t = (0,\ldots,0)$, we get the {\em iterated tangent bundle} 
\begin{equation}
\sfGsy_{(0,\ldots,0)}^\sfn U = \sfT^\sfn U 
\end{equation}
which is not only an $n$-fold groupoid, but an ``$n$-fold group bundle'' since all target and source maps coincide for $\ttt = 0$
(Theorem \ref{th:tangentbundle}).

\subsection{Symmetry}
The general cubic $n$-fold groupoid $\sfG^\sfn U$ is {\em not edge-symmetric} (in contrast to those considered by 
Brown and Higgins \cite{BH81}). It is a non-trivial fact that $\sfGsy^\sfn U$ {\em is} edge-symmetric (Theorem \ref{th:Sym2}),
and this symmetry can be interpreted by saying that the
 ``rule of the finite'',  stating that the $n$-fold pair groupoid is edge-symmetric, continues to hold for all parameters $\ttt$.
By the way,  this is the most natural proof of {\em
  Schwarz' theorem} on the symmetry of second  differentials, as given in topological differential calculus
  (\cite{BGN04, Be08}).  
In full calculus, the ``rules of the finite'' are more complicated:
there ought to be something replacing Schwarz' theorem -- what it is, is not yet clear to me.

\subsection{Homogeneity} 
The next  ``rule of the finite'' is
{\em homogeneity under scalars}. For $n=1$, it has been formulated conceptually  as a 
{\em structure of double category $U^\settt{1}$} (Part I).
By the general iteration procedure,  in full cubic calculus this leads to define a {\em small $2n$-fold category} that we
denote by $\sfG^{\ol \sfn} U$
(the letter $\sfG$ is used although this is not a groupoid). 
Everything said so far about $\sfG^n$ applies here, {\em mutatis mutandis}, and the structure is, of course, even more complicated.
However,
since homogeneity makes no sense for a fixed scalar $t$, this construction cannot be implemented into symmetric calculus, and
we have to add homogeneity ``by hand'' as a property that can be respected (or not) by the morphisms of $\sfGsy^\sfn U$.

\subsection{$C^n$- and $C^\infty$-laws}
{\em Laws of class $C^n$ over $\K$} are now defined in a canonical way: 
they are  morphisms of $k$-fold groupoids for $k=0,\ldots,n$, each compatible with the preceding, and likewise for $n=\infty$.
These concepts can be defined with respect to each of the three symbols $\sfG, \sfGsy, \sfGsi$.
For $\K = \R$, 
 or other topological base rings, and domains $U$ open in topological $\K$-modules $V$,
 smooth maps 
   give rise to such laws, in a unique way (Theorem \ref{th:topCn}). 
We also prove that {\em polynomial laws} (in the sense of Roby, cf.\ Part I) give rise to laws of class $C^\infty$ (Theorem \ref{th:polylaws}).
The definition of {\em $C^\infty$-manifold laws} follows the pattern given in Part I
(see Section \ref{sec:manifolds}).

\ssk
Every law has a  ``finite part''  which respects the ``rules of the finite'', since it is uniquely determined by its basic set-map
$f$ (Theorem \ref{th:finitelaws}); however, the law need not
 respect all ``rules of the infinitesimal'';  if it respects scalar actions, we call it
{\em homogeneous}; if it respects symmetry, we call it {\em symmetric}. 
According to ``Leibniz principle'', the laws induced by smooth maps in topological calculus do respect all  rules.
At a first glance, it might appear to be a drawback of our approach that such properties {\em are not automatic} for abstract
laws; however, in view of a deeper understanding, it rather opens  the very interesting question of ``classifying'' such
rules and of understanding ``how many independent ones'' of them exist, and it points the way towards calculi
having different rules than the classical ones, such as {\em supercalculus}.

\subsection{Towards geometry: the neighbor relation}
In the present work, I concentrate on developing the {\em formalism}: the $n$-fold groupoid $\sfG^\sfn M$ and $2n$-fold small cate\-gory
$\sfG^{\ol \sfn} M$ are canonical and chart-independent objects coming with any  ``smooth space'' $M$, and they contain all information 
about its ``smooth structure''. Once the formal theory is developed, we may  ask for its ``geometric interpretation''. Such geometric interpretation
will be more ``synthetic'' than the usual, rather analytic, differential geometry, since we directly work with invariant and chart-independent 
structures. Indeed, we shall be able to use, in our framework,
 the language of {\em synthetic differential geometry} (SDG, see \cite{Ko06, Ko10, MR91}) --  indeed, I believe that the
present approach yields  a new and very interesting model of SDG.  
Such topics will be treated in subsequent work. However, to understand the present text, it may help the reader to have 
 in  mind some kind of ``geometric picture'', and therefore I shall already here say some words on topics that will be taken up in
later work.
 
\ssk
Let us call ``primitive points'' the elements of the
 point set $M$ underlying our smooth space, and think of  the $n$-fold groupoids $\sfG^\sfn M$ as representing
higher ``stages'' or ``levels of definition'' of $M$,
whence the proposition of terminology to call $\sfG^\sfn M$ the {\em $n$-th magnification} of the ``space $M$''
(for a most general theory, one will have to speak of {\em local} $n$-fold groupoids, cf.\ Part I, but this does not chage much
in the following). 
  More precisely, a point $a$ has ``stage'' or ``level $k$'' if it belongs to a vertex set
$\sfG^{\alpha;\sfn} M$ for a vertex $\alpha \in \cP(\sfn)$ of order $\vert \alpha \vert =k$; so the primitive points are of stage $0$.
Every point $a \in \sfG^\sfn M$ has a {\em scale}, which is its image $\ttt = \ttt_a$ in the scaleoid $\sfG^\sfn 0$ under the
canonical morphism $ \sfG^\sfn M \to \sfG^\sfn 0$. 
A point $a$ is called {\em finite} if its scale $\ttt_a$  belongs to the finite part $\sfGfi^\sfn 0$ of the scaleoid, and {\em infinitesimal} else.

\ssk
The {\em first order neighbor relation}, which Kock in \cite{Ko10} considers ``the main actor'' of his presentation of SDG,
can be defined as follows: 
say that two points $a,b$ of same level $k$ and belonging to a common vertex set $\sfG^{\alpha;\sfn} M$ are {\em first order neighbors},
$a \sim_1 b$, if there exists an edge $(\beta,\alpha)$ such that $a$ and $b$ have same image under the source projection of the
edge category defined by this  edge:
$\pi^{\beta,\alpha;\sfn}_0 (a) = \pi^{\beta,\alpha;\sfn}_0(b)$.
This relation is symmetric and reflexive, but not transitive since there are $k$ such edge projections!
In symmetric calculus, neighbors have the same scale; in full cubic calculus, the situation is more intricate since the 
``magnifcation procedure'' is not only applied to the space, but also to the scale itself.
%, and note also that such interpretation fixes an arbitrary choice between ``source'' and ``target''; but this seems to be inevitable. 
Similarly, the {\em second neighbor relation}, $a \sim_2 b$, is defined by using {\em double source projections} coming from faces 
with top vertex $\alpha$, and so on. As in SDG, it follows immediately that $a \sim_k b$, $b \sim_\ell c$ implies
$a \sim_{k + \ell} c$. When $a,b,c$ are infinitesimal  (with scale essentially zero), then heuristic arguments show that these neighbor relations
coincide indeed with those used by Kock in \cite{Ko06, Ko10}. The advantage, compared to SDG,
is that the same terminology, and all arguments,
literally also apply for finite scale -- all ``synthetic reasoning'' applies in a perfectly rigourous way if we start on the ``finite side'',
by formulating things carefully in a language using natural groupoid strucures, and then letting scales ``tend to zero''.
To carry this out, on the finite side, we may think of ``points of level $k$'' in different ways: with regard to points of higher level, they are
 just ``points'' (objects); but with respect to points of lower level, they rather are ``arrows'' or ``segments'', namely, the ``point $a$''  is interpreted
 as the
segment or arrow from its source $x=\pi_0(a)$ to its target $y=\pi_1(a)$. If targets and sources match, such segments can be composed: this
operation is natural, and it is preseved by all smooth maps. 
If targets and sources don't match, there is no natural composition; however, there are unnatural compositions: this is 
what {\em connection theory} is about (subsequent work). 
The groupoid aspect of connection theory, going back to Charles Ehresmann (\cite{E65, KPRW07}),
 is strongly stressed in SDG (\cite{Ko10}), but I have the impression that only with the present approach
it really becomes clear why it is so natural:   the very foundations of calculus rely on it.
% and re-introducing it after having it suppressed from the foundations of calculus seems somewhat artificial. 
I believe that many other topics of local differential geometry will be amenable, in a similar way, to a ``synthetic'' approach
giving a link between ``the finite'' and ``the infinitesimal''.

\begin{notation}
We use small sans serif letters to denote the following finite subsets of $\N$:
$$
\sfone = \{ 1 \}, \quad \sftwo = \{ 1,2 \}, \quad \sfthree = \{ 1,2,3 \},\ldots ,
{\mathsf n}=\{ 1,\ldots, n \} .
$$
We let also $\sfn' = \{ 1', \ldots, n'\}$ be a formal copy of $\sfn$, disjoint from $\sfn$, and
$$
\ol \sfn := \sfn \cup \sfn' = \{ 1,1', 2,2', \ldots, n,n' \} .
$$
The power set of a set $A$ is denoted by $\cP(A)$. In particular, $\cP(\sfn)$ is the {\em natural $n$-hypercube}, 
and $\cP(\ol \sfn)$ is the natural $2n$-hypercube (Appendix \ref{sec:hypercube}). 
The natural $n$-hypercube is the index set for $n$-fold categories (Appendix \ref{App:nfoldCats}).
To simplify notation, for elements of a $\K$-module $V$, instead of $v_\emptyset, v_{ \{ 1 \} }, v_{\{ 1,2 \}},\ldots$ we throughout write
$v_0,v_1,v_{12},\ldots$, and likewise for elements of $\K$.
\end{notation}

\begin{acknowledgment}
I  gratefully acknowledge that I have a permanent position at a public university.
I also acknowledge having received the advice that  long range and
novel work is not suitable for mathematical journals, and that one should better wait
and collect material until it is presentable in monograph form --
I shall follow this advice (and
I just hope that the delay will not  be longer than the usual length of a human life). 
Anyways, comments and suggestions are welcome, and I thank the reader in advance for letting me know.
\end{acknowledgment}

%%%%%%%%%%%%%%%%%%%%%%%%% SECTION 1 
 \section{The threefold way of higher order calculus}\label{sec:threefold}
%%%%%%%%%%%%%%%%%%%%%%%%%%%

\subsection{Full cubic calculus}\label{subsec:full}
Recall from Part I the definitions of $U^{[1]}$, $(U^{[1]})^\times$, and $f^{[1]}$.
The ``full'' cubic calculus (\cite{BGN04}) is based on direct iteration of (\ref{eqn:01}).

\begin{definition}[Full cubic $C^n$] \label{def:full}
Under the assumptions of {\em topological differential calculus} (i.e., $\K$ a topological base ring with dense unit group, $V,W$ topological
$\K$-modules, $U$ open in $V$; cf.\ Part I), a map $f:U \to W$ is called {\em of class $C^n$} if it is $C^{n-1}$ and if 
$f^{[n-1]}$ is $C^1$; so  the map  $f^{[n]} := (f^{[n-1]})^{[1]}$ exists and is continuous on its domain of definition
$$
U^{[n]} = (U^{[n-1]})^{[1]}.
$$
\end{definition}

\nin 
The {\em full $n$-th order difference quotient map} $f^{[n]}$ has a ``cubic structure'': it is a  combination of values of $f$ at $2^n$
{\em evaluation points}.  We use ``cubic notation'': for $n=1$, instead of $(x,v,t)$, we write $(v_0,v_1,t_1)$, and 
 $f^{[2]} = (f^{[1]})^{[1]}$ is  given by
 
\ssk 
$ \quad f^{[2]} \bigl( ( v_0,v_1,t_1), (v_2,v_{12},t_{12}) , t_2 \bigr) $ 
\begin{align}\label{eqn:02}
\qquad \qquad  &=  \frac{1}{t_2} \Bigl(    f^{[1]} \bigl((v_0,v_1,t_1) + t_2 
(v_2,v_{12},t_{12})\bigr) - f^{[1]}(v_0,v_1,t_1)  \Bigr)
\cr
\qquad \qquad  & =
\frac{
f(v_0 + t_2 v_2 + (t_1+ t_2 t_{12}) (v_1+ t_2 v_{12})) - f(v_0 + t_2 v_2)}{t_2(t_1 + t_2 t_{12}) }
-
\frac{f(v_0+t_1 v_1) - f(v_0)}{t_2 t_1}
\end{align}
For $n >2$, it is rather hard to give a ``closed formula'' for  $f^{[n]}$. 

\subsection{Symmetric cubic calculus}\label{subsec:symmetric}
Things are considerably simplified if, at each step, one 
``derives'' only in direction of the ``space variables'' $v$, and suppresses derivation in direction of the ``time variables'' $t$:

\begin{definition}[Symmetric $C^{n,\sym}$]\label{def:sym}
Under assumptions as above, for fixed  $t\in \K$,  let 
$U_t:=\{ (v_0,v_1) \in V^2 \mid \, v_0 \in U, v_0 + t v_1 \in U \} $ and write, for a $C^1$-map $f:U \to W$,
$$
f_t^{[1]}:  U_t \to W, \quad
(v_0,v_1)\mapsto f^{[1]}(v_0,v_1,t) .
$$
By induction, for fixed $\ttt = (t_1,\ldots,t_n) \in \K^n$, we put, whenever defined,
$$
f_\ttt^{[n]}:=
f_{t_1,\ldots,t_{n}}^{[n]}: = (f_{t_1,\ldots,t_{n-1}}^{[n-1]})_{t_{n}}^{[1]} : U_\ttt^{[n]} := ( \ldots (U_{t_1})_{t_2} \ldots)_{t_n} \to W ,
$$
and we say that 
 $f$ is  {\em $C^{n,\sym}$} if it is $C^{n-1,\sym}$, and if the map
 $f^{[n-1]}_{(t_1,\ldots,t_{n-1})}$ is $C^1$ (so that, in particular, 
 $f^{[n]}_\ttt$ is defined and continuous).
\end{definition} 

\nin
At second order, this amounts to taking $t_{12}=0$ in (\ref{eqn:02}):  
\begin{align}\label{eqn:03}
f_{t_1,t_2}^{[2]} (v_0,v_1,v_2,v_{12})  =f^{[2]} \bigl( ( v_0,v_1,t_1), (v_2,v_{12},0) , t_2 \bigr) 
\qquad \qquad   \qquad \qquad \qquad \quad
\cr
\qquad \qquad   \qquad =
\frac{
f(v_0 + t_1 v_1 + t_2 v_2 + t_1 t_2 v_{12}) - f(v_0 + t_1 v_1 ) - f(v_0 + t_2 v_2 ) + f(v_0) }{t_1 t_2}
\end{align}
This formula generalizes: using notation about hypercubes from Appendix \ref{app:hypercube},
%In sharp contrast to the case of the full $f^{[n]}$,
%it is not difficult to give an explicit formula for $f^{[n]}_\ttt$:

\begin{theorem}\label{th:symmetricformula}
For invertible scalars $t_1,\ldots,t_n$ and $\vvv =  (v_\alpha)_{\alpha \in \cP(\sfn)} \in U_\ttt^{[n]}$
 with $v_\alpha \in V$, we have,
$$
f_\ttt^{[n]} (\vvv ) %= \frac{1}{\prod_{i=1}^n t_i} \sum_{\alpha \in \cP(\sfn)} (-1)^{\vert \alpha\vert} f(x_\alpha) 
=  \frac{1}{\prod_{i=1}^n t_i} \sum_{\alpha \in \cP(\sfn)} (-1)^{\ell\alpha+1}  \cdot 
f
\bigl( \sum_{\beta \in \cP( \alpha)} t_{\beta_1}  \cdots t_{\beta_\ell}   v_\beta  \bigr) .
$$
 \end{theorem}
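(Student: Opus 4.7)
The plan is to proceed by induction on $n$, using the recursive definition $f^{[n]}_\ttt = (f^{[n-1]}_{\ttt'})^{[1]}_{t_n}$, where $\ttt' = (t_1,\ldots,t_{n-1})$. The base case $n=1$ is just the definition of the symmetric slope rewritten with indices $\alpha \in \cP(\sfone) = \{\emptyset,\{1\}\}$, and matches the claimed formula directly.

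For the induction step, the key is to decompose the data along the partition $\cP(\sfn) = \cP(\mathsf{n-1}) \sqcup \{\alpha \cup \{n\} : \alpha \in \cP(\mathsf{n-1})\}$. Accordingly, I would write $\vvv = (\vvv',\vvv'')$ with $\vvv' = (v_\alpha)_{\alpha \subseteq \mathsf{n-1}}$ and $\vvv'' = (v_{\alpha \cup \{n\}})_{\alpha \subseteq \mathsf{n-1}}$, so that by the recursion
\[
f^{[n]}_\ttt(\vvv) = \frac{1}{t_n}\Bigl(f^{[n-1]}_{\ttt'}(\vvv' + t_n \vvv'') - f^{[n-1]}_{\ttt'}(\vvv')\Bigr),
\]
where the sum is understood componentwise, i.e.\ $(\vvv' + t_n\vvv'')_\alpha = v_\alpha + t_n v_{\alpha\cup\{n\}}$ for each $\alpha \subseteq \mathsf{n-1}$.

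Applying the induction hypothesis to both $f^{[n-1]}_{\ttt'}$-terms, the inner argument at index $\alpha \subseteq \mathsf{n-1}$ becomes $\sum_{\beta \subseteq \alpha} t_{\beta_1}\cdots t_{\beta_\ell}(v_\beta + t_n v_{\beta\cup\{n\}})$. Distributing and performing the substitution $\gamma = \beta \cup \{n\}$ in the $t_n$-piece absorbs the extra factor $t_n$ into the product $t_{\gamma_1}\cdots t_{\gamma_{\ell+1}}$, so the argument rewrites as $\sum_{\gamma \subseteq \alpha \cup \{n\}} t_{\gamma_1}\cdots t_{\gamma_\ell} v_\gamma$: exactly the shape required by the target formula, indexed by subsets of $\sfn$ that contain $n$. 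The subtracted term $f^{[n-1]}_{\ttt'}(\vvv')$ contributes, in the same way, precisely the subsets $\alpha \subseteq \sfn$ with $n \notin \alpha$. Combining the two pieces, every $\alpha \in \cP(\sfn)$ appears exactly once and the factor $1/t_n$ merges with $1/\prod_{i<n}t_i$ into $1/\prod_{i=1}^n t_i$.

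The main obstacle, and the only thing that needs care, is the sign bookkeeping: subsets $\alpha$ containing $n$ inherit the sign from the induction hypothesis applied to $\alpha \setminus \{n\} \subseteq \mathsf{n-1}$, while subsets not containing $n$ pick up the extra minus sign from the first-order difference quotient. I would verify by a direct parity check that these two contributions assemble to the coefficient claimed in the statement (which, tracking the $n=1,2$ cases, is naturally the alternating sign $(-1)^{n+\ell\alpha}$). Once the signs are matched, the induction is complete; everything else is mechanical rewriting and double counting of subsets.
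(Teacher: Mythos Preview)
Your argument is correct and follows exactly the paper's inductive strategy: apply the recursion $f^{[n]}_\ttt = (f^{[n-1]}_{\ttt'})^{[1]}_{t_n}$ and observe that each old evaluation point $x_\alpha$ spawns the new one $x_{\alpha\cup\{n\}}$. Your parenthetical remark on the sign is on target as well---the coefficient that actually comes out of the induction is $(-1)^{n+\ell\alpha}$, agreeing with the stated $(-1)^{\ell\alpha+1}$ only for odd $n$ (compare the $n=2$ formula (\ref{eqn:03})).
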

 
\begin{proof} By induction:
for $n=1$, $t=t_1$, $\cP(\sfone)= \{ \{ 1 \}, \emptyset \}$, we get 
$f_t^{[1]}(v_\emptyset ,v_1)= \frac{1}{t} ( f(v_\emptyset + t v_1) - f(v_\emptyset))$, as we should.
Assume the fomula holds at rank $n$. In order to compute the formula for
$f^{[n+1]}_{t_1,\ldots,t_{n+1}}$, compute $h_{t_{n+1}}^{[1]}$ for
$h = f^{[n]}_{t_1,\ldots,t_{n}}$.
Each of the $2^n$ ``old'' evaluation points
$x_\alpha := \sum_{\beta \in \cP( \alpha)} t_{\beta_1}  \cdots t_{\beta_\ell}   v_\beta $
gives rise to another ``new'' evaluation point
$x_\alpha + t_{n+1} v_{\alpha \cup \{ n+1 \} }$, which together with the  $2^n$ ``old'' evaluation points
give the $2^{n+1}$ evaluation points from the claim. 
 Mind also the sign-changes and division by $t_{n+1}$ in order to get the
formula from the claim at rank $n+1$.
\end{proof}

The formula may be qualified as {\em cubic}, and (in contrast to (\ref{eqn:02}))  has 
an obvious symmetry property with respect to the action of the permutation group $S_n$ acting on indices,
 whence the term ``symmetric''. 
Moreover, the usual higher order differentials $d^nf(v_0)$ appear when all $t_i=0$ and all $v_\alpha = 0$ whenever
$\alpha$ has more than one digit, and then the $S_n$-symmetry gives Schwarz' lemma (symmetry of higher differentials), cf.\
\cite{BGN04}. Thus the symmetric cubic calculus allows to recover easily certain notions of usual calculus.

\subsection{Simplicial calculus}
In \cite{Be13}, I  take as departure point an ``explicit formula'' which can be seen as a simplicial analog
of the explicit formula from Theorem \ref{th:symmetricformula}. % (for $n=2$, take $v_2=0$ and $t_{12}=1$ in (\ref{eqn:02})).
We shall give a conceptual interpretation  in subsequent work. 

\begin{theorem}
Let $\K = \R$ or $\mathbb C$, $V$, $W$  locally convex topological vector spaces, and $f:U \to W$ a map defined on an open set of  $V$.
Then the following are equivalent:
\begin{enumerate}
\item
$f$ is of class $C^n$ in the ``usual'' sense (Michal-Bastiani, cf. \cite{BGN04}), 
\item
$f$ is $C^{n,\sym}$ in the sense defined above,
\item
$f$ is $C^n$ in the full cubic sense defined above,
\item
$f$ is $C^{<n>}$ as defined in \cite{Be13}.
\end{enumerate}
\end{theorem}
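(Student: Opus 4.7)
The plan is to close the equivalence through the cycle $(1) \Rightarrow (3) \Rightarrow (2) \Rightarrow (1)$, and to handle $(1) \iff (4)$ separately. Two of these implications are already in the literature and should simply be cited: $(1) \iff (3)$ is the principal result of \cite{BGN04}, and $(1) \iff (4)$ is proved in \cite{Be13}. The genuinely new content to be supplied here is therefore $(3) \Rightarrow (2)$ and $(2) \Rightarrow (1)$.

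For $(3) \Rightarrow (2)$, I would induct on $n$ and establish the identity expressing $f^{[n]}_\ttt$ as the literal restriction of $f^{[n]}$ obtained by setting every ``composite-time'' scalar $t_\alpha$ with $\ell\alpha > 1$ equal to $0$. The base case $n=1$ is tautological from the definitions, and the inductive step amounts to comparing the slope of $f^{[n-1]}_\ttt$ in its spatial variables with the slope of $f^{[n-1]}$ in a lifted direction whose time-component vanishes -- a pattern already visible by inspecting (\ref{eqn:02}) against (\ref{eqn:03}). Since the restriction of a continuous map to a closed subset of its domain is continuous, $C^n$ in the full cubic sense descends to $C^{n,\sym}$.

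For $(2) \Rightarrow (1)$, I would again induct on $n$, the case $n=1$ being tautological. Under the inductive hypothesis that $C^{n-1,\sym}$ implies Michal-Bastiani $C^{n-1}$, it remains to produce the $n$-th derivative and verify its joint continuity. The key step specializes Theorem \ref{th:symmetricformula} to the diagonal $\ttt = (t,\ldots,t)$ with $v_\alpha = 0$ whenever $\ell\alpha > 1$; writing $x := v_\emptyset$ and $h_i := v_{\{i\}}$, the expression collapses, up to sign, to
\[
f^{[n]}_{(t,\ldots,t)}(\vvv) \;=\; \frac{1}{t^n} \sum_{\alpha \in \cP(\sfn)} (-1)^{\ell\alpha+1}\, f\Bigl(x + t \sum_{i \in \alpha} h_i\Bigr),
\]
i.e.\ to the normalized $n$-th forward difference of $f$ at $x$ in directions $h_1,\ldots,h_n$. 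The assumed continuous extension of $f^{[n]}_\ttt$ down to $\ttt = 0$ then simultaneously delivers the existence of the symmetric $n$-th differential $d^n f(x)(h_1,\ldots,h_n)$ as the limit and its joint continuity in $(x,h_1,\ldots,h_n)$.

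The hard part will be converting this continuous $n$-th symmetric differential into genuine Michal-Bastiani $C^n$, which by definition requires iterated \emph{directional} derivatives with appropriate continuity in all arguments. In the locally convex setting this conversion is by now classical once $C^{n-1}$ is in hand (cf.\ \cite{BGN04}): one recovers the iterated partials by a polarization-type argument together with one-variable-at-a-time differentiation, invoking the inductive hypothesis to control intermediate orders. The care required is essentially combinatorial -- untangling the order of the limit $\ttt \to 0$ from the evaluation on the cross-section $v_\alpha = 0$ for $\ell\alpha > 1$, and certifying that the \emph{joint} continuity of $(\vvv,\ttt) \mapsto f^{[n]}_\ttt(\vvv)$ near $\ttt = 0$ truly upgrades to joint continuity of the $n$-th derivative. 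Once this is handled, the cycle closes and the fourfold equivalence is established.
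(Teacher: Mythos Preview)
Your overall plan matches the paper's: cite \cite{BGN04} for $(1)\Leftrightarrow(3)$, observe $(3)\Rightarrow(2)$ by restriction, prove $(2)\Rightarrow(1)$ by induction, and handle $(4)$ via \cite{Be13}. (Minor point: the paper does not invoke a single equivalence $(1)\Leftrightarrow(4)$ from \cite{Be13} but rather $(3)\Rightarrow(4)$ from Th.~2.6 there and $(4)\Rightarrow(1)$ from Cor.~2.8 there.)

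Where you diverge is in the inductive step for $(2)\Rightarrow(1)$, and there you are making life harder than necessary. You try to extract $d^n f$ directly from the forward-difference specialization of $f^{[n]}_\ttt$ and then face the ``hard part'' of upgrading a jointly continuous $n$-th symmetric differential to Michal--Bastiani $C^n$. The paper bypasses this entirely: unwinding the definition, $f^{[n]}_{(t_1,\ldots,t_n)} = \bigl(f^{[1]}_{t_1}\bigr)^{[n-1]}_{(t_2,\ldots,t_n)}$, so freezing $t_1=0$ shows that $df = f^{[1]}_0$ is itself of class $C^{n-1,\sym}$. The induction hypothesis applied to $df$ then gives $df$ of Michal--Bastiani class $C^{n-1}$, which is exactly the definition of $f$ being Michal--Bastiani $C^n$. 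No polarization, no delicate interchange of limits---the whole ``hard part'' you flag simply does not arise. Your route would presumably work, but the cleaner induction is to descend one level on $df$ rather than jump straight to the top differential.
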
 
 
 \begin{proof}
 The equivalence of (1) and (3) is shown in \cite{BGN04} (and for the implication $(1) \Rightarrow (3)$ we need the specific assumptions
 on $\K,V$, and $W$).
 It is clear that ``full $C^n$'' is stronger than ``symmetric $C^n$'', so (3) $\Rightarrow$ (2).
 By induction, one shows that
    (2) $\Rightarrow$ (1); indeed,  (2) implies existence and continuity of $f_0^{[1]}$, which amounts to
 (1) (existence and joint continuity of all directional derivatives). 
 As shown in \cite{Be13}, Th.\ 2.6., we have $(3) \Rightarrow (4)$ for any topological ring.
 Finally, 
 $(4) \Rightarrow (1)$ since by \cite{Be13}, Cor.\ 2.8, the usual differentials are partial maps of $f^{<k>}$.
 \end{proof}

\nin
If $V$ is finite-dimensional, all of these properties are equivalent  to any of the classical definitions of the class $C^n$.
For a general topological ring we have $(3) \Rightarrow (2) \Rightarrow (1)$, but the converses fail in general.
 For the converse of $(3) \Rightarrow (4)$, we do not know.

%%%%%%%%%%%%%%%%%%%%%%%%%%%%% SECTION 2
\section{The general pattern}
%%%%%%%%%%%%%%%%%%%%%%%%%%%%%%

We define the symbols $\sfG^\sfn, \SfG^\sfn,\sfG^{\ol\sfn}$ and $\SfG^{\ol\sfn}$, and derive their basic properties.

\subsection{Zero order}
For linear sets $U \subset V$, $U' \subset V'$ and a map $f:U \to U'$, we let
$$
\sfG^\emptyset U := \sfG^{\ol \emptyset} U : = U, \qquad
\sfG^\emptyset f := \sfG^{\ol \emptyset} f : = f.
$$

\begin{comment}%%%%%%%%%%%%%
 we should descend to order $-1$ and $-2$, see Baez and Shulman.  But this really is work for later times. 
\end{comment}%%%%%%%%%%%%%%%%

\subsection{First order groupoids and double cats}\label{sec:first}
We introduce first order notation. 
Recall (Appendix \ref{App:hypercubes}) that
the {\em $1$-cube} has two vertices: $\emptyset$ and $\sfone$, corresponding to the sets of objects, resp.\ of morphisms,
and one edge:
$(\emptyset, \sfone)$, corresponding to the projections from morphisms to objects.
The {\em extended $1$-cube} has
 four vertices, and so there will be four vertex sets labelled by the four vertices
$\emptyset, \{ 1 \}, \{ 1'\}, \{ 1,1'\}$ of $\ol \sfone = \{ 1,1' \}$; there are  four pairs of projections, four injections and four 
compositions corresponding to the four
edges. 
The edges $(\emptyset, \{ 1\} )$ and $(\{ 1'\}, \{ 1',1 \})$ are {\em of the first kind}, and the edges
$(\emptyset, \{ 1'\})$ and $(\{ 1\}, \{ 1,1'\})$ are {\em of the second kind}.

\begin{definition}\label{def:G1}
For a linear set $U \subset V$, we define sets, zero section, projections
\begin{align*}
\sfG^{\sfone;\sfone}U  & = U^\sett{1} = \{ (v_0,v_1,t_1) \in V^2 \times \K \mid \, v_0 \in U, v_0 + v_1 t_1 \in U \} ,
\cr
\sfG^{\emptyset;\sfone} U  & = U \times \K
\cr
z^{\emptyset,\sfone;\sfone}, :  \sfG^{\emptyset;\sfone} U  \to  \sfG^{\sfone;\sfone} U, & \quad
(v_0,t_1) \mapsto (v_0,0,t_1)
\cr
\pi^{\emptyset,\sfone;\sfone}_0 : \sfG^{\sfone;\sfone}U \to \sfG^{\emptyset;\sfone} U
& \quad
 (v_0,v_1,t_1)  \mapsto (v_0 ,t_1), 
\cr
\pi^{\emptyset,\sfone;\sfone}_1 : \sfG^{\sfone;\sfone}U \to \sfG^{\emptyset;\sfone} U
& \quad (v_0,v_1,t_1) \mapsto  (v_0 + t_1 v_1,t_1),
\end{align*}
and composition  given on $U^\sfone \times_{U\times \K} U^\sfone$ by
 by
$$
(v_0',v_1',t_1) \ast (v_0,v_1,t)= (v_0,v_1+v_1',t).
$$
These data define a groupoid, denoted by $U^\sett{1}$ in Part I,
$$
\sfG^\sfone U := (\sfG^{\emptyset;\sfone}U,\sfG^{\sfone;\sfone}U,\pi^{\emptyset,\sfone;\sfone}_\sigma,z^{\emptyset,\sfone;\sfone}, *) .
$$
For fixed $t_1=t$, we use the notation $\sfG_t^\sfone U$ for the groupoid obtained by freezing the last variable. Its morphism set is
$$
\sfG_t^{\sfone;\sfone} U = U_t = \{ (v_0,v_1) \in V^2 \mid \, (v_0,v_1,t) \in U^\sfone \}.
$$
The description by triples $(v_0,v_1,t_1)$ will be called the {\em affine coordinates} of $\sfG^\sfone U$.
\end{definition}

\nin
We shall give explicit formulae with respect to affine coordinates also at higher order; however, note that affine coordinates are
not an intrinsic feature (in geometric terms, they depend on the flat affine connection given by a chart domain).

\begin{definition}\label{def:G2}
For a linear set $U \subset V$, we define vertex sets 
\begin{align*}
\sfG^{\ol \sfone;\ol \sfone}U  & = U^\settt{1} = \{ (v_0,v_1,s_1,t_1) \in V^2 \times \K^2 \mid \, v_0 \in U, v_0 + v_1 s_1 t_1 \in U \} ,
\cr
\sfG^{\emptyset;\ol \sfone} U  & = U \times \K ,
\cr
\sfG^{\sfone;\ol \sfone}U  & = \sfG^{\sfone;\sfone}U  =U^\sett{1}  ,
\cr
\sfG^{\{ 1' \} ;\ol \sfone} U  & = U \times \K^2 ,
\end{align*}
and edge projections $\pi^{\beta,\alpha;\ol \sfone}_\sigma : \sfG^{\alpha;\ol \sfone} U\to \sfG^{\beta; \ol \sfone}U$
for each of the four edges $(\beta,\alpha)$, as described in Part I, Section 3.2, as well as zero sections and four compositions, as defined in loc.\ cit.
These data define
a small double category $\sfG^{\ol \sfone} U$ (and denoted by $U^\settt{1}$ from Part I).
Sometimes, as in Part I,
projections and composition corresponding to the two edges of second kind will be denoted by
$\partial$ and $\bullet$. 
\end{definition}

\begin{definition}\label{def:point1}
By {\em terminal object} we mean the linear set $0 = (0,0)$ (zero $\K$-module). 
Recall from Part I that $\sfG^\sfone 0 = (\K,\K)$ has a trivial groupoid structure, and that  the double cat
$\sfG^{\ol \sfone} 0$ reduces to the left action category $(\K,\K^2)$ of $\K$; recall also that there are canonical
morphisms of groupoids
$$
0_U: \sfG^\sfone U \to \sfG^\sfone 0, \qquad \ol 0_U: \sfG^{\ol \sfone} U \to \sfG^{\ol \sfone} 0 .
$$
\end{definition}

\begin{definition}\label{def:finitepart1}
The {\em finite part} of $\sfG^\sfone U$, denoted by $\SfG^\sfone U$, is the subgroupoid lying over the invertible scalars
$\K^\times$, and denoted by $(U^\sett{1})^\times$ in Part I.
The {\em finite part} of $\sfG^{\ol \sfone} U$, denoted by $\SfG^{\ol\sfone} U$, is the sub-doublecat lying over 
$\K^\times \times \K^\times \downdownarrows \K^\times$, and denoted by $(\bfU^\settt{1})^\times$ in 
Theorem 3.6 of Part I. Recall from that theorem that the finite part is isomorphic to the direct product of the pair groupoid of $U$
with the one of $\K^\times$. 
\end{definition}

\begin{theorem}\label{th:top1}
Under the assumptions of topological differential calculus, for  a map $f:U \to W$ the following are equivalent:
\begin{enumerate}
\item
$f$ is of class $C^1_\K$,
\item
$f$ induces a continuous morphism of groupoids 
$\sfG^\sfone f : \sfG^\sfone U \to \sfG^\sfone W$
such that $0_W \circ \sfG^\sfone f = 0_U$, 
\item
$f$ induces a continuous morphism of small double cats
$\sfG^{\ol \sfone} f : \sfG^{\ol\sfone} U \to \sfG^{\ol \sfone} W$
such that $\ol 0_W \circ \sfG^{\ol\sfone} f = \ol 0_U$.
 \end{enumerate}
\end{theorem}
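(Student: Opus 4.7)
The plan is essentially to recast the first-order theorem of Part I in the present notation, so most of the work amounts to unpacking definitions and invoking what is already established; the new element is only the scaleoid compatibility.

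First, I would make the candidate morphism explicit. On objects, $\sfG^\sfone f$ should send $(v_0,t_1) \in U\times \K$ to $(f(v_0),t_1)$, and on morphisms it should send $(v_0,v_1,t_1)$ to $(f(v_0), f^{[1]}(v_0,v_1,t_1), t_1)$, where $f^{[1]}$ is understood as its continuous extension to $t_1=0$ when $f$ is $C^1$. With this tautological definition, the scaleoid compatibility $0_W \circ \sfG^\sfone f = 0_U$ is built in, since the scalar coordinate $t_1$ is preserved verbatim. Analogously, on $\sfG^{\ol\sfone} U$ I would define $\sfG^{\ol\sfone} f$ on the top vertex $(v_0,v_1,s_1,t_1)$ by replacing $v_1$ with $f^{[1]}(v_0,v_1,s_1 t_1) \cdot$ (appropriate normalization), following the Part~I convention for $U^\settt{1}$; the scalar coordinates $(s_1,t_1)$ are carried over unchanged, so again $\ol 0_W \circ \sfG^{\ol\sfone}f = \ol 0_U$ is automatic.

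For the equivalence (1)$\Leftrightarrow$(2): the direction (1)$\Rightarrow$(2) requires checking that the map just defined is a continuous groupoid morphism. Continuity is precisely the $C^1$-condition, namely the extendability of $f^{[1]}$ to $t_1=0$. The groupoid axioms (preservation of source, target, units, composition) are immediate from the formula: source/target correspond to plugging $v_1=0$ on the left or right, units to $f^{[1]}(v_0,0,t_1)=0$, and composition to the additivity identity $f^{[1]}(v_0,v_1+v_1',t_1)=f^{[1]}(v_0,v_1,t_1)+f^{[1]}(v_0+t_1 v_1,v_1',t_1)$, which is valid both for invertible $t_1$ (by direct computation) and by continuity at $t_1=0$. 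For (2)$\Rightarrow$(1), I simply read off $f^{[1]}$ from the second component of the continuous morphism $\sfG^\sfone f$: the existence of a continuous extension at $t_1=0$ is exactly the definition of $C^1_\K$.

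For (2)$\Leftrightarrow$(3): the double category $\sfG^{\ol\sfone} U$ enriches $\sfG^\sfone U$ by the scalar-action edges of second kind. The implication (3)$\Rightarrow$(2) is trivial by restricting to $s_1=1$. The implication (2)$\Rightarrow$(3) relies on the homogeneity identity $f^{[1]}(v_0,s_1 v_1,t_1) = s_1 \cdot f^{[1]}(v_0,v_1,s_1 t_1)$, which is a purely algebraic rewriting of the difference quotient when $s_1 t_1$ is invertible, and extends continuously to the degenerate locus once $f$ is $C^1$. This identity is precisely what makes $\sfG^{\ol\sfone} f$ respect the second-kind projections and the scalar-action composition $\bullet$; the other axioms and compatibilities come for free from (2).

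The bookkeeping with edges of second kind and the interchange law is what needs most care, but I would rely on the fact that in Part~I the double category $U^\settt{1}$ was constructed so that these identities hold by design; hence the only real point of substance is the passage from the algebraic identity $f^{[1]}(v_0,s_1 v_1,t_1)=s_1 f^{[1]}(v_0,v_1,s_1 t_1)$ on the finite part $\SfG^{\ol\sfone}U$ to its continuous extension across the singular locus, and this extension is guaranteed exactly by condition (1). The main conceptual obstacle is not a computational one but the clean identification of the scaleoid projection $\ol 0_U$ with the forgetful map to the scalar variables, which is what makes the morphism a genuine additional constraint only on its object-component (namely that scalars are preserved strictly, not merely up to a morphism in $\sfG^{\ol\sfone} 0$).
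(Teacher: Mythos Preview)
Your proposal is correct and follows essentially the same route as the paper's proof. The paper is terser: it observes that any morphism $\sfG^\sfone f$ compatible with $0_U$ is \emph{necessarily} of the form $(v_0,v_1,t_1)\mapsto (f(v_0),f^{[1]}(v_0,v_1,t_1),t_1)$ for some difference factorizer $f^{[1]}$, then invokes Part~I for the fact that a continuous difference factorizer is automatically additive and homogeneous --- which is precisely your composition identity and your homogeneity identity spelled out. The only slightly loose spot in your write-up is the parenthetical ``(appropriate normalization)'' when defining $\sfG^{\ol\sfone} f$ on the top vertex; you should simply write $(v_0,v_1,s_1,t_1)\mapsto (f(v_0),f^{[1]}(v_0,v_1,s_1 t_1),s_1,t_1)$, which you effectively do later via the homogeneity relation.
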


\begin{proof}
The two {\em vertex maps} of $\sfG^\sfone f$ will be denoted by
$$
\sfG^{\sfone;\sfone} f:\sfG^{\sfone;\sfone} U \to \sfG^{\sfone;\sfone}  W , \qquad 
\sfG^{\emptyset;\sfone} f:  \sfG^{\emptyset;\sfone} U\to \sfG^{\emptyset;\sfone}W ,
$$
and likewise for the four vertex maps of $\sfG^{\ol \sfone} f$.
As seen in Part I, in affine coordinates, $\sfG^{\sfone;\sfone} f$ is neccesarily given by
$$
f^\sett{1} (v_0,v_1,t_1) = (f(v_0), f^{[1]}(v_0,v_1,t_1), t_1),
$$
where $f^{[1]}$ is a {\em difference factorizer} of $f$.
If $f$ is $C^1$, 
 a {\em continuous} difference factorizer exists, and it is automatically
additive and homogeneous, hence $f$ induces morphisms of groupoids and of double cats. 
Conversely, if a continuous $\sfG^\sfone f$ exists, then a continuous $f^{[1]}$ exists, and $f$ is $C^1$.
\end{proof}

\begin{definition}\label{def:ccatC1}
Let $\K$ be an arbitary ring and $(U,V)$, $(U',V')$ be linear sets. A {\em law of class $C^1$} is a morphism
$\bff^\sfone = (\bff^{\emptyset;1},\bff^{\sfone;\sfone})$ from $\sfG^\sfone U$ to $\sfG^\sfone U'$ such that
$0_{U'} \circ \bff^\sfone = 0_U$, and a
{\em homogeneous} law is a morphism
$\bff^{\ol\sfone}$ from $\sfG^{\ol \sfone} U$ to $\sfG^{\ol \sfone} U'$
such that $\ol 0_{U'} \circ \bff^{\ol\sfone}  = \ol 0_U$.
The {\em concrete category of linear sets with $C^1$-laws} is given by linear sets
$((U,V);( \sfG^{\emptyset;\sfone} U, \sfG^{\emptyset;\sfone} V);( \sfG^{\sfone ; \sfone} U,\sfG^{\sfone;\sfone} V))$ and morphisms
$(f, \bff^{\emptyset;1},\bff^{\sfone;\sfone})$. In this ccat, {\em pullbacks} are defined by Definition \ref{def:pullback}.
\end{definition}

\begin{theorem}\label{th:polyone}
Every polynomial law induces a homogeneous law of class $C^1$.
\end{theorem}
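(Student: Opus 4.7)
The plan is to extract the difference factorizer $f^{[1]}$ from the polynomial law via scalar extension to the polynomial ring $\K[t_1]$, then define the vertex maps of $\bff^\sfone$ (and of $\bff^{\ol\sfone}$) in affine coordinates and verify the (double) groupoid axioms by a non-zero-divisor cancellation argument.

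First I would recall (from Part~I, following Roby) that a polynomial law $f: V \to V'$ provides a natural family of maps $f_A: V \otimes_\K A \to V' \otimes_\K A$ for every commutative $\K$-algebra $A$, compatible with ring homomorphisms. Taking $A = \K[t]$ and the element $v_0 + t v_1 \in V[t]$, naturality with respect to the augmentation $t \mapsto 0$ forces $f_{\K[t]}(v_0 + t v_1) - f(v_0) \in t \cdot V'[t]$. Since $V'[t] = \bigoplus_{n \geq 0} V' \cdot t^n$ as a $\K$-module, multiplication by $t$ is injective on $V'[t]$, so there is a unique element $g(v_0, v_1, t) \in V'[t]$ with $t \cdot g(v_0, v_1, t) = f_{\K[t]}(v_0 + t v_1) - f(v_0)$. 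Propagating this definition through further scalar extensions $\K[t] \to A'$ yields a polynomial law $f^{[1]}: V \times V \times \K \to V'$ characterized by the universal identity
$$
t_1 \cdot f^{[1]}(v_0, v_1, t_1) = f(v_0 + t_1 v_1) - f(v_0) .
$$

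Next I would set $\bff^{\emptyset;\sfone}(v_0, t_1) := (f(v_0), t_1)$ and $\bff^{\sfone;\sfone}(v_0, v_1, t_1) := (f(v_0), f^{[1]}(v_0, v_1, t_1), t_1)$ and verify the groupoid-morphism axioms. Compatibility with $\pi_0^{\emptyset,\sfone;\sfone}$ is by construction; compatibility with $\pi_1^{\emptyset,\sfone;\sfone}$ is precisely the defining identity $f(v_0) + t_1 f^{[1]}(v_0, v_1, t_1) = f(v_0 + t_1 v_1)$; compatibility with the zero section $z^{\emptyset,\sfone;\sfone}$ follows from $t_1 \cdot f^{[1]}(v_0, 0, t_1) = 0$ together with injectivity of multiplication by $t_1$ on $V'[t_1]$; and $0_{V'} \circ \bff^\sfone = 0_V$ is trivial since $\bff^{\emptyset;\sfone}$ is the identity on the $\K$-factor.

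The step I expect to be the main obstacle is the multiplicativity axiom
$$
f^{[1]}(v_0, v_1 + v_1', t_1) = f^{[1]}(v_0, v_1, t_1) + f^{[1]}(v_0 + t_1 v_1, v_1', t_1),
$$
because over an arbitrary ring one cannot simply divide by $t_1$ as in the classical (invertible) case. The key trick is to multiply both sides by $t_1$: each side then telescopes to $f(v_0 + t_1(v_1 + v_1')) - f(v_0)$, yielding the identity after multiplication by $t_1$; the non-zero-divisor argument on $V'[t_1]$ then cancels $t_1$ and delivers the claimed identity universally. For the homogeneous extension $\bff^{\ol\sfone}$, I would iterate the same scalar-extension argument with $A = \K[s_1, t_1]$, using that $s_1 t_1$ remains a non-zero-divisor on $V' \otimes A$; the two second-kind edge compatibilities then reduce to the naturality of scalar multiplication for a polynomial law, and $\ol 0_{V'} \circ \bff^{\ol\sfone} = \ol 0_V$ is again trivial by construction.
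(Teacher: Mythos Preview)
Your proposal is correct and is a faithful reconstruction of the argument behind the cited result: the paper's own proof of this theorem is the single line ``Part I, Theorem 4.15'', so there is no in-text argument to compare against. The scalar-extension to $\K[t_1]$ combined with the non-zero-divisor cancellation (multiplication by $t_1$ is injective on $V'[t_1]$ regardless of zero divisors in $\K$) is exactly the mechanism Part~I uses to produce the difference factorizer and to verify the additivity and homogeneity identities universally.
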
 

\begin{proof} Part I, Theorem 4.15.
\end{proof}

\subsection{Copies of $k$-th generation}\label{sscec:k-generation} We define
the symbols  $\sfG^\sett{k}$ and $\K_k$ to be formal copies of $\sfG^\sett{1}$ and $\K$, 
together with a label $k\in \N$ remembering its ``generation'':  
\begin{align}
\sfG^\sett{k} U & :=  (\sfG^{\emptyset;\{ k \}} U, \sfG^{\{ k \};\{ k\}} U, \pi_\sigma^{\emptyset,\{ k \};\{ k\}}, z, *)
\cr
& :=  (U \times \K_\K, U^\sett{k}, \pi_\sigma,z,\ast)  
 \end{align}
is a formal copy, with label $k$, of the groupoid $\sfG^\sfone U = (U\times \K,U^\sett{1})$ (def.\ \ref{def:G1}). We say that this
copy is {\em of $k$-th generation}. In the same way, for $f:U \to U'$, 
 the {\em copy of $k$-th generation of $f^\sett{1}$} is 
\begin{equation}
\sfG^\sett{k} f := (f \times \id_\K,  f^\sett{k} ) : \sfG^\sett{k} U \to \sfG^\sett{k} U' ,
\end{equation}
Similarly,  $\sfG^\settt{k}$ and $f^\settt{k}$ are  defined as a formal copy of $\sfG^{\ol \sfone}$ and $f^\settt{1}$.

\subsection{The $n$-fold groupoid $\sfG^n U$}
To define the symbol $\sfG^\sfn=\sfG^\sett{1,\ldots,n}$, 
the idea is simply to apply first $\sfG^\sett{1}$, then $\sfG^\sett{2}$, up to $\sfG^\sett{n}$,
to the initial datum $(U,V)$. At the first step this gives a pair of linear sets, forming a groupoid,
 at the second step four  linear sets, forming a double groupoid, and at the $n$-th step 
a hypercube of $2^n$ linear sets,  which form an $n$-fold groupoid (cf.\ Appendix \ref{app:nfoldcats} for definitions). 

\begin{theorem}[The full $n$-fold groupoid]\label{th:Full1}
Let $(U,V)$ be a linear set. Then, for all  $n \in \N$, there exist  uniquely  defined $n$-fold groupoids
$$
\sfG^\sfn U = \bigl( \sfG^{\alpha; \sfn} U \bigr)_{\alpha \in \cP(\sfn)} ,
$$
such that $\sfG^\sfone U$ is as in Definition \ref{def:G1}, and 
$\sfG^\sfn U$ is the $n$-fold groupoid 
obtained by applying $\sfG^\sett{n}$ to the data of the $(n-1)$-fold groupoid $\sfG^{\mathsf{n-1}} U$, that is,
$$
\sfG^\sfn U = \sfG^\sett{1,\ldots,n}U := \sfG^\sett{n} (\sfG^\sett{1,\ldots,n-1} U) = (\sfG^\sett{n} \circ \sfG^\sett{n-1} \circ \cdots \circ \sfG^\sett{1}) U .
$$
In particular, projections and zero section in the following diagram are morphisms of $n-1$-fold groupoids:
\begin{equation} \label{eqn:n-1}
\begin{matrix}
\sfG^\sfn U = \sfG^\sett{n} (\sfG^{\mathsf{n-1}}U ) \qquad { }  \cr
\pi^n_\sigma \downdownarrows \quad \uparrow z^n \cr
\sfG^{\mathsf{n-1}}U \times \K_n 
\end{matrix}
\end{equation}
Analogous statements hold for the {\em finite part} $n$-fold groupoid
$\SfG^\sfn U$, which is defined inductively by
$
\SfG^\sfn U = \SfG^\sett{n} (\SfG^\sett{1,\ldots,n-1} U)= (\SfG^\sett{n} \circ \SfG^\sett{n-1} \circ \cdots \circ \SfG^\sett{1}) U$.
As $n$-fold groupoid, it is a product of iterated pair groupois (Appendix \ref{app:pg}),
$$
\SfG^\sfn U \cong
\pG^\sfn U \times \pG^\sett{2,3,\ldots,n} \K_1^\times \times \pG^\sett{3,4,\ldots,n} \K_2^\times  \times \ldots \times  
\pG^\sett{n} \K_{n-1}^\times \times \K_n^\times .
$$
%where  $\K_i$ is a copy of $\K$ ``of $i$-the generation''.
\end{theorem}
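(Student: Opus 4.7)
The plan is to prove everything by induction on $n$, using that $\sfG^\sett{n}$ is a formal copy of the first-order functor $\sfG^\sfone$ from linear sets to small groupoids. The base case $n=1$ is literally Definition \ref{def:G1}, together with the finite-part description recalled in Definition \ref{def:finitepart1}, which gives $\SfG^\sfone U \cong \pG^\sfone U \times \K^\times$. For the inductive step, assume that $\sfG^{\mathsf{n-1}}U$ has already been constructed as a small $(n-1)$-fold groupoid: a family of linear sets $\sfG^{\alpha;\mathsf{n-1}}U$ indexed by $\alpha \in \cP(\mathsf{n-1})$, together with an edge-groupoid in the category of linear sets for each edge of $\cP(\mathsf{n-1})$, such that each face is a small double groupoid. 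Applying $\sfG^\sett{n}$ vertex-by-vertex yields $2^n$ linear sets labelled by $\cP(\sfn) = \cP(\mathsf{n-1}) \sqcup \{\alpha \cup \{n\}\}_\alpha$, and for each edge of $\cP(\sfn)$ a groupoid of linear sets in the obvious way.

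The central point is that $\sfG^\sett{n}$, being (a copy of) $\sfG^\sfone$, is a functor from linear sets to groupoids in linear sets which preserves the relevant pullbacks used to form composition diagrams; hence applying $\sfG^\sett{n}$ edge-wise to the $(n-1)$-fold groupoid $\sfG^{\mathsf{n-1}}U$ automatically produces, for each of the $n-1$ ``old'' directions, a groupoid of $1$-fold groupoids, i.e.\ a double groupoid, on every 2-face involving the new direction $n$. Faces not involving $n$ are simply $\sfG^\sett{n}$ applied to the corresponding faces of $\sfG^{\mathsf{n-1}}U$, which remain double groupoids by functoriality. This verifies the compatibility identities required for an $n$-fold groupoid (Appendix \ref{App:nfoldCats}), and yields diagram \eqref{eqn:n-1}: the projections $\pi^n_\sigma$ and the zero section $z^n$ of the freshly introduced groupoid direction are, by the very construction of $\sfG^\sett{n}$ as a functor applied to the $(n-1)$-fold groupoid $\sfG^{\mathsf{n-1}}U$, morphisms of $(n-1)$-fold groupoids. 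Uniqueness is built into the recursion $\sfG^\sfn U := \sfG^\sett{n}(\sfG^{\mathsf{n-1}}U)$: given the definition of $\sfG^\sett{n}$ as a copy of $\sfG^\sfone$, every structure map is forced. The main obstacle I expect is verifying the compatibility on each 2-face, i.e.\ checking that the composition morphisms commute with projections ``in both directions'' -- this is exactly the content of pullback-preservation of $\sfG^\sett{n}$ at the level of the structure maps of $\sfG^{\mathsf{n-1}}U$, and must be spelled out carefully, in affine coordinates if desired.

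For the finite part, the same induction applies with $\SfG^\sett{n}$ in place of $\sfG^\sett{n}$: starting from $\SfG^\sfone U \cong \pG^\sfone U \times \pG^\sfone_? \K^\times$ (where the unit-group factor is just a pair groupoid because it lies entirely over invertible scalars, cf.\ Definition \ref{def:finitepart1}), one applies $\SfG^\sett{n}$ to the inductively constructed product decomposition. Here one uses that $\SfG^\sett{k}$ applied to a cartesian product of linear sets factors as the product of $\SfG^\sett{k}$ applied to the factors (compatibility with products, which is the product-preservation property announced in the introduction for the finite/symmetric part), together with the fact that the finite part of $\sfG^\sfone$ applied to a $\K^\times$-factor of scalar type yields a further pair-groupoid factor $\pG^\sett{n}\K^\times_{n-1}$ times a fresh scalar factor $\K_n^\times$. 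Telescoping the induction gives exactly the advertised decomposition
\[
\SfG^\sfn U \cong \pG^\sfn U \times \pG^\sett{2,\ldots,n}\K_1^\times \times \pG^\sett{3,\ldots,n}\K_2^\times \times \cdots \times \pG^\sett{n}\K_{n-1}^\times \times \K_n^\times ,
\]
where the decreasing ``tails'' of the index set $\{k,\ldots,n\}$ record which later directions $\sfG^\sett{k}$ was applied to the $\K_{k-1}$-factor after it was first introduced. Once this bookkeeping is set up cleanly, the verification is purely formal.
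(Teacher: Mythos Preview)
Your proposal is correct and follows essentially the same inductive strategy as the paper: functoriality of $\sfG^\sett{n}$ transports the $(n-1)$-fold groupoid structure, and the pullback compatibility (Theorem~\ref{la:pullback} in Appendix~\ref{app:pullback}) is precisely what makes the derived composition well-defined on the right domain and stable under $\ast_n$. One small correction for the finite part: $\SfG^\sett{k}$ is not literally product-preserving (that is the \emph{symmetric} functor $\sfGsy_\ttt$); the paper instead uses directly that $\SfG^\sett{k} X \cong \pG^\sett{k} X \times \K_k^\times$ and the product-preservation of the pair-groupoid functor $\pG$, which gives the same telescoping decomposition you wrote down.
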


\begin{proof}
Projections, injections and product $\ast_1$ of the groupoid $(\sfG^\sfone U,\ast_1)$ are given by polynomial (quadratic) formulae.
Hence we can apply $\sfG^\sett{2}$ to these data, and we get again data of the same kind, i.e., a groupoid with product
$\sfG^\sett{2}(\ast_1)$. Note that the product
$\ast_1^\sett{2}$ is defined on the set $(U^\sett{1}\times_{U\times \K} U^\sett{1})^\sett{2}$, which, according to Theorem
\ref{la:pullback}, contains the domain of definition
$$
D:= \sfG^\sett{2} (U^\sett{1}\times_{U\times \K} U^\sett{1}) =
\sfG^\sett{2} U^\sett{1}\times_{\sfG^\sett{2} (U\times \K)} \sfG^\sett{2}  U^\sett{1}
$$
of $\sfG^\sett{2}(\ast_1)$, and hence is well-defined. Once the structure maps are seen to be well-defined, 
it is immediate from the functorial properties of $\sfG^\sett{2}$ that
$\sfG^\sett{2} (\sfG^\sfone,\ast_1)$ is again a groupoid. 
To show that $\sfG^\sett{2} \sfG^\sfone U$ is a double groupoid, note that it carries also a groupoid structure with
product $\ast_2$ over the base $\sfG^\sfone U \times \K$, and this groupoid structure is compatible with the preceding one
since its structure maps are all of the form $\sfG^\sett{2}f$ for some $f$, and such maps are compatible with the
$\ast_2$-groupoid structure. Theorem \ref{la:pullback} is used to ensure that $D$ is indeed stable under
$\ast_2$. 

\ssk
The claim for general $n\in \N$ is now proved by induction, using exactly the same arguments: 
applying $\sfG^\sett{n}$ to the $(n-1)$-fold groupoid structure of $\sfG^{\mathsf{n-1}} U$ yields again an
$(n-1)$-fold groupoid; concerning domains of definitions of partially defined products, this is well defined since 
the functor $\sfG^\sett{n}$ acts on pullbacks as defined in Definition \ref{def:pullback}.
The $(n-1)$-fold groupoid is  of type $\ul{\rm Goid}$ since all structures are compatible with the groupoid structure
$\ast_n$ coming with the functor $\sfG^\sett{n}$.  Taken together, this shows that $\sfG^\sfn U$ is an $n$-fold groupoid.

\ssk
For $\SfG$, we proceed in the same way by induction, starting with Def.\   \ref{def:finitepart1}.
As shown in Part I, $\SfG^\sfone U \cong \pG^\sfone U \times \K_1^\times$, so
$$
\SfG^\sftwo U \cong \pG^\sett{2} (\pG^\sett{1} U \times \K_1^\times) \times \K_2^\times 
= \pG^\sett{1,2} U \times \pG^\sett{2} \K_1^\times  \times \K_2^\times,
$$
and so on, by induction.
\end{proof}

%Remark. In a first version of proof I proved the exchange law by noting that both laws are addition
%in some component, hence the exchange laws follows from commutativity of addition $+$ in $V$.

\nin
The $2^n$ vertex sets $\sfG^{\alpha;\sfn} U$ of the $n$-fold groupoid will be described later in more detail
(Section \ref{sec:full}, Theorem  \ref{th:fullcompute2}). 
For the moment, let us just record the description of the {\em top vertex set} ($\alpha = \sfn$) and of the
{\em bottom vertex set} ($\alpha = \emptyset$):

\begin{theorem}[Top and bottom vertex sets]\label{th:topvertex}
The top vertex set of $\sfG^\sfn U$ is given by
$$
\sfG^{\sfn;\sfn} U =  U^\sett{1,\ldots,n} :
 = (U^\sett{1,\ldots,n-1})^\sett{n} .
$$ 
Up to notation, this is the set $U^{[n]}$ from Definition \ref{def:full}. 
The bottom vertex set  is
$$
\sfG^{\emptyset;\sfn} U = U \times \K^n .
$$
The finite part $n$-fold groupoid is the sub-$n$-fold groupoid of $\sfG^\sfn U$ projecting onto its bottom vertex set 
$$
\SfG^{\emptyset;\sfn} U = U \times (\K^\times)^n .
$$
\end{theorem}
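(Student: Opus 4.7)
The plan is to argue by induction on $n$, using the recursion $\sfG^\sfn U = \sfG^{\{n\}}(\sfG^{\mathsf{n-1}} U)$ provided by Theorem \ref{th:Full1}. The base case $n=1$ is immediate from Definition \ref{def:G1}: one has $\sfG^{\sfone;\sfone} U = U^{\{1\}}$ and $\sfG^{\emptyset;\sfone} U = U \times \K$, and the finite part lies over $U \times \K^\times$ by Definition \ref{def:finitepart1}.

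For the inductive step I would first isolate, from the hypercube bookkeeping of $n$-fold groupoids (Appendix \ref{App:nfoldCats}), the effect of $\sfG^{\{n\}}$ on each vertex of the old hypercube: for $\alpha \in \cP(\sfn)$,
\begin{equation*}
\sfG^{\alpha;\sfn} U =
\begin{cases}
\sfG^{\alpha;\mathsf{n-1}} U \times \K_n & \text{if } n \notin \alpha, \\
(\sfG^{\alpha \setminus \{n\};\mathsf{n-1}} U)^{\{n\}} & \text{if } n \in \alpha.
\end{cases}
\end{equation*}
This is the direct translation of the ``base versus morphism'' splitting performed by the first-order functor $\sfG^{\{n\}}$ of Section \ref{sscec:k-generation}: $n\in\alpha$ means we sit at the morphism vertex of the new generation, while $n\notin\alpha$ means we sit at the base vertex. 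Taking $\alpha=\sfn$ (so $n\in\alpha$) and applying the induction hypothesis yields $\sfG^{\sfn;\sfn} U = (U^{\{1,\ldots,n-1\}})^{\{n\}} = U^{\{1,\ldots,n\}}$, which is exactly $U^{[n]}$ of Definition \ref{def:full} up to the labelling of the scalar variable. Taking $\alpha=\emptyset$ (so $n\notin\alpha$) gives $\sfG^{\emptyset;\sfn} U = (U\times \K^{n-1})\times \K_n = U\times \K^n$.

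The claim about $\SfG^\sfn U$ is obtained by running the same induction on the recursion $\SfG^\sfn U = \SfG^{\{n\}}(\SfG^{\mathsf{n-1}}U)$ of Theorem \ref{th:Full1}, with $\K^\times$ in place of $\K$ at each generation; the bottom vertex is then $U \times (\K^\times)^n$ as claimed, and the product decomposition into iterated pair groupoids is already recorded in Theorem \ref{th:Full1}.

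I do not expect any serious obstacle, since the structural work---existence of $\sfG^\sfn U$ as an $n$-fold groupoid, functoriality on pullbacks, and the explicit form of the finite part---has already been carried out in Theorem \ref{th:Full1}. The one step that deserves care is the bookkeeping of the vertex-set recursion displayed above: one must check that, when $\sfG^{\{n\}}$ is applied to the $(n-1)$-fold groupoid $\sfG^{\mathsf{n-1}}U$, it really splits each old vertex $\alpha \in \cP(\mathsf{n-1})$ into the two new vertices $\alpha$ and $\alpha \cup \{n\}$ in $\cP(\sfn)$, in a way compatible with all projections, zero sections and compositions. Once this convention is fixed, the proof is a direct unwinding.
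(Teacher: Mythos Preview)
Your proposal is correct and follows essentially the same inductive approach as the paper: the top vertex is obtained by applying $\sfG^{\{n\}}$ to the old top vertex, the bottom vertex by taking a direct product with $\K$, and the finite part by replacing $\K$ with $\K^\times$ at each step. You even spell out the general vertex recursion (the cases $n\in\alpha$ versus $n\notin\alpha$), which the paper defers to a later result; for the present statement only the two extreme cases $\alpha=\sfn$ and $\alpha=\emptyset$ are needed, and there your argument coincides with the paper's.
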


\begin{proof}
The top vertex set of $\sfG^\sfn U$ is obtained by applying $\sfG^\sett{n}$ to the top vertex set of $\sfG^{\mathsf n-1} U$.
Since the top vertex set of $\sfG^\sfone U$ is $U^\sett{1}$, the claim follows by induction.
The bottom vertex set of $\sfG^\sfn U$ is a direct product of $\K$ with the bottom vertex set of 
$\sfG^{\mathsf{n-1}} U$. Since the bottom vertex set of $\sfG^\sfone U$ is $U \times \K$ and the one of
$\SfG^\sfone U$ is $U \times \K^\times$, the claims about the bottom vertex sets follow again by induction.
\end{proof}

\nin 
For instance, the top vertex set of $\sfG^\sftwo U$ is given in affine coordinates by
\begin{equation}\label{eqn:U-two}
U^\sftwo = (U^\sett{1})^\sett{2} 
= 
\end{equation}
\begin{equation*}
\Bigsetof{ (v_0,v_1,v_2,v_{12},t_1,t_2,t_{12}) \in V^4 \times \K^3 } {\begin{array}{c}
v_0 \in U \\  v_0 + t_1 v_1 \in U,  \\
  v_0 + t_2 v_2 \in U \\ v_0 + t_2 v_2 +  (t_1+t_2 t_{12} )(v_1+  t_2 v_{12}) \in U \end{array}}
\end{equation*} 
(compare this with (\ref{eqn:02})!). 
Note that is formula is ``not symmetric in the indices $1,2$'', indicating that the $n$-fold groupoid
$\sfG^\sfn U$ is  {\em not edge-symmeric}. Therefore,
it is important to have  notation taking account of issues related to the order of $\N$:

\begin{definition}\label{def:N-def}
Let $N = \{ a_1, \ldots ,a_n \} \subset \N$ be a non-empty finite subset. We consider $N$ as ordered, with the natural order inherited
 from $\N$:  we assume that $a_1 < \ldots < a_n$. 
The $n$-fold groupoid $\sfG^N U$ is defined as in the preceding theorem by
$$
\sfG^N U := (\sfG^{\{ a_n \}} \circ \cdots \circ \sfG^{\{ a_1 \} }) U .
$$
For each vertex $\alpha$, the {\em vertex spaces} are denoted by $\sfG^{\alpha;N} U$;
the {\em top vertex set} is 
$$
\sfG^{N;N}U = U^N =  ( \ldots (U^\sett{a_1})^\sett{a_2} \ldots )^\sett{a_n} 
$$ 
and the {\em bottom vertex set} is $\sfG^{\emptyset;N}U = U \times \K^n$.
For each edge
$(\beta,\alpha)$, the {\em edge projections and injections} are denoted by
$\pi^{\beta,\alpha;N}_U$, resp.\
$z^{\beta,\alpha;N}_U$, and the groupoid law belonging to the edge category is denoted by
$\ast^{\beta,\alpha;N}$. More generally, if $(\gamma,\alpha)$ is a $k$-cube in $\cP(N)$ (so $\gamma \subset \alpha$
and $\vert \alpha \setminus \gamma \vert =k$),
there is a unique section 
$$
z^{\gamma,\alpha;N}: \sfG^{\gamma;N} U \to \sfG^{\alpha;N} U ,
$$
and there are $2^k$ projections $\pi^{\gamma,\alpha;N}_{\bf \sigma}: \sfG^{\alpha;N} U \to \sfG^{\gamma;N} U$
(at each step there is a choice between target and source projection).
The section $z^{\emptyset,N;N}$ is called the {\em bottom-up section}, and the $2^n$ projections
$\pi^{\emptyset,N;N}$ are called the {\em top-down projections}.
\end{definition}

\nin This definition makes it obvious that, if $N = A \cup B$ is a ``Dedekind cut'' (by this we mean:
$\forall a \in A, \forall b \in B$: $a<b$), then
\begin{equation}
\sfG^{A \cup B} = \sfG^B \circ \sfG^A , \qquad U^{A \cup B} = (U^A)^B . 
\end{equation}

\subsection{The $2n$-fold category $\sfG^{\ol n} U$}
To define the symbol $\sfG^{\ol \sfn}=\sfG^\sett{1,1',\ldots,n,n'}$, 
we proceed as above, applying at each step the double-cat rule $\sfG^{\ol{ \{ k \} }}$:

\begin{theorem}\label{th:Full2}
Let $(U,V)$ be a linear set and $n \in \N$. Then there exists a unique $2n$-fold category, given by data as in Theorem
\ref{th:nfoldcat} of the form
$$
\sfG^{\ol \sfn} U = \bigl( \sfG^{\alpha; \ol \sfn} U \bigr)_{\alpha \in \cP(\ol\sfn)} ,
$$
such that $\sfG^{\ol \sfone} U = \sfG^\settt{1} U$ is the double category defined above, and 
$$
\sfG^{\ol \sfn} U = \sfG^\settt{1,\ldots,n}U := \sfG^\settt{n} (\sfG^\settt{1,\ldots,n-1} U) = (\sfG^\settt{n} \circ \sfG^\settt{n-1} \circ \cdots \circ \sfG^\settt{1}) U .
$$
\end{theorem}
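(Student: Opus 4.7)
The plan is to prove Theorem~\ref{th:Full2} by induction on $n$, following exactly the pattern of the proof of Theorem~\ref{th:Full1} but replacing the functor $\sfG^\sett{n}$ at each step by its double-cat cousin $\sfG^\settt{n}$. The base case $n=1$ is already accomplished by Definition~\ref{def:G2}, which assembles $\sfG^{\ol \sfone} U = \sfG^\settt{1} U$ as a small double category with its four vertex sets, four edge projections, zero sections, and the two compositions $\ast$ and $\bullet$.

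For the inductive step, I would assume $\sfG^{\ol{\mathsf{n-1}}} U$ has been constructed as a $(2n-2)$-fold category, with vertex sets $\sfG^{\alpha; \ol{\mathsf{n-1}}} U$ indexed by $\alpha \in \cP(\ol{\mathsf{n-1}})$, and with all edge projections, zero sections, and edge compositions given by polynomial (quadratic) formulae on affine coordinates. Apply the functor $\sfG^\settt{n}$ (Section~\ref{sscec:k-generation}) to this entire body of data. At each old vertex set it produces a fresh double-cat indexed by $\cP(\{n,n'\})$, and at each old edge category its $\sfG^\settt{n}$-image. Since $\cP(\ol \sfn)$ decomposes canonically as $\cP(\ol{\mathsf{n-1}}) \times \cP(\{n,n'\})$, this delivers the required family of vertex sets indexed by the vertices of the extended $2n$-hypercube.

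Two things then need verification, and both are handled exactly as in the proof of Theorem~\ref{th:Full1}. First, the partially defined compositions (originally living over pullbacks of the form $A \times_C B$) transport correctly: by Theorem~\ref{la:pullback}, $\sfG^\settt{n}$ is compatible with pullbacks, so that the $\sfG^\settt{n}$-image of an old composition law has as its domain precisely the pullback required to define an edge category in the new structure. Second, the ``new'' compositions $\ast_{n}$, $\bullet_n$ arising from the double-cat structure of $\sfG^\settt{n}$ are compatible with all ``old'' structure maps: every old structure map is of the form $\sfG^\settt{n} f$ for some $f$, hence by functoriality is a morphism of double-cats with respect to the new operations. Taken together, compatibility on every $2$-face of $\cP(\ol \sfn)$ (inherited--inherited, inherited--new, and new--new) is established, which is the defining property of a $2n$-fold category in the sense of Theorem~\ref{th:nfoldcat}. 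Uniqueness is immediate because the recipe $\sfG^{\ol \sfn} U := \sfG^\settt{n}(\sfG^{\ol{\mathsf{n-1}}} U)$ leaves no choice at any step.

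The main obstacle is not conceptual but organisational. The inherited--inherited case reduces directly to the induction hypothesis, and the new--new case is exactly the double-cat identity built into $\sfG^\settt{n}$ itself; the bookkeeping for the mixed inherited--new $2$-faces --- and in particular the need to ensure that partial domains are respected by each added operation --- is the only delicate point, but it is governed entirely by functoriality plus the pullback compatibility of $\sfG^\settt{n}$. Given how tightly the construction parallels that of $\sfG^\sfn U$ in Theorem~\ref{th:Full1}, no genuinely new idea is required.
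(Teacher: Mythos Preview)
Your proposal is correct and follows precisely the approach the paper takes: the paper's own proof is a single sentence stating that the result is proved by induction ``in the same way as Theorem~\ref{th:Full1}, by applying at each step a functor of the type $\sfG^\settt{k}$ to the structure from the preceding level.'' Your write-up simply unpacks that sentence, correctly invoking Theorem~\ref{la:pullback} (which explicitly covers the $\sfG^{\ol\sfone}$ case) and the functoriality argument from the proof of Theorem~\ref{th:Full1}.
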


\begin{proof}
The theorem is proved by induction, in the same way as Theorem \ref{th:Full1}, by applying at each step a functor of the type $\sfG^\settt{k}$ to
the structure from the preceding level (cf.\ Section \ref{sec:catconstruction}). 
\end{proof}

\nin
The symbols $\SfG^N$ and $\sfG^{\ol N}$ can now be defined similarly as in  \ref{def:N-def}, and the structure of the finite part
can be described similarly as in Theorem \ref{th:Full1}.

\subsection{Morphisms, and laws}

\begin{definition}\label{def:law}
Assume $(U,V)$, $(U',V')$ are linear sets over $\K$. 
A {\em $C^n$-law between $U$ and $U'$} is given by  morphisms of $k$-fold groupoids
$\bff^\sfk : \sfG^\sfk U \to \sfG^\sfk U'$ for $k=0,\ldots,n$, compatible with each other, in the sense that
they complete diagram (\ref{eqn:n-1})
\begin{equation} \label{eqn:fn-1}
\begin{matrix}
\sfG^\sfk U = \sfG^\sett{k} (\sfG^{\mathsf{k-1}}U ) \qquad { } & \buildrel{\bff^\sfk} \over \to & \sfG^\sfk U'  \cr
\pi^n_\sigma \downdownarrows \quad \uparrow z^n  & & \downdownarrows \quad \uparrow \cr 
\sfG^{\mathsf{k-1}}U \times \K_k & \buildrel{\bff^{\mathsf{k-1}} \times \id_\K} \over \to & \sfG^{\mathsf{k-1}}U' \times \K_k.
\end{matrix}
\end{equation}
The set map 
$f:= \bff^\emptyset : U\to U'$ is called the
 {\em base map of $\bff^\sfn$}.
A {\em $C^\infty$-law between $U$ and $U'$} is given by 
morphisms of $k$-fold groupoids
$\bff^\sfk : \sfG^\sfk U \to \sfG^\sfk U'$
 for each $k \in \N$ which are compatible with each other, in the sense
explained above.
In the same way, {\em homogeneous laws} are defined by replacing $\sfn$ by $\ol \sfn$ and morphisms of $n$-fold
groupoids by morphisms of $2n$-fold categories. 
\end{definition}

\begin{definition}\label{def:Cn-cat}
We denote by
\ul{$C^n_\K$-{\rm linset}} (resp. \ul{$C^{\ol n}_\K$-{\rm linset}})
the concrete category of linear sets with (homogeneous) $C^n$-laws as morphisms:
objects of \ul{$C^n_\K$-{\rm linset}} are families of linear sets
$(U,\sfG^\sfn U; V , \sfG^\sfn V)$ and morphisms are $C^n$-laws $(f,\eff^\sfn)$.
Pullbacks in this category are defined by
the base linear set $A \times_C B$ (see Definition \ref{def:pullback}) and the extended sets 
$$
\sfG^\sfn (A \times_C B ) := \sfG^\sfn A \times_{\sfG^\sfn C} \sfG^\sfn B
$$
(which form again an $n$-fold groupoid, by induction from Theorem \ref{la:pullback}). 
In particular, for $C = 0$ we get the behaviour of $\sfG^\sfn$ with respect to cartesian products
$$
\sfG^\sfn (A \times B ) = \sfG^\sfn A \times_{\sfG^\sfn 0} \sfG^\sfn B .
$$
Similar definitions are given for the ccat of linear sets with
 homogeneous $C^n$-laws, with the symbol $\sfG^\sfn$ replaced by $\sfG^{\ol \sfn}$.
\end{definition}

\begin{theorem}\label{th:polylaws}
Assume $V$ and $W$ are $\K$-modules and 
$P:\ul V \to \ul W$ is a polynomial law  (cf.\ Part I, Def.\ 4.12).
Then $P$ induces homogeneous $C^\infty$-laws. 
\end{theorem}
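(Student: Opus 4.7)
The plan is to proceed by induction on $n$, constructing a compatible family of morphisms $\bff^\sfk: \sfG^\sfk U \to \sfG^\sfk U'$ (and the extended $\ol{\bff}^{\ol\sfk}$) for all $k \in \N$. The case $k=0$ is trivial (take the set map $P_\K$), and $k=1$ is Theorem \ref{th:polyone}. For the inductive step, we need to extend a compatible homogeneous $C^{n-1}$-law to level $n$, and the guiding principle is that the construction $\sfG^{\{n\}}$, although defined combinatorially, is a disguised form of scalar extension — a context where Roby's polynomial laws live natively by definition.

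The key lemma underlying the induction is: \emph{if $f: U\to U'$ is the base map of a polynomial law, then so is $f^\sett{1}: U^\sett{1}\to (U')^\sett{1}$} (viewed as a map of $\K$-modules $V\times V\times \K \to W\times W \times \K$). To see this, observe that a triple $(v_0,v_1,t_1) \in V^\sett{1}$ can be interpreted as the element $v_0+t_1 v_1 \in V\otimes_\K \K[\eps]/(\eps^2)$ together with the auxiliary scalar $t_1$; under the polynomial law $P$, naturality with respect to scalar extensions (the defining property of Roby's laws) forces $P_{\K[\eps]}(v_0+\eps v_1) = P(v_0) + \eps\cdot \Phi(v_0,v_1)$ for a polynomial law $\Phi$, and the difference factorizer on $\sfG^\sett{1}U$ is then expressible polynomially in $(v_0,v_1,t_1)$. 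This is exactly what is needed to exhibit $f^\sett{1}$ as the base map of a polynomial law between ambient modules.

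Iterating this lemma $n-1$ times shows that the top-vertex component $\bff^{\mathsf{n-1};\mathsf{n-1}}$ of the law at level $n-1$ is itself the base of a polynomial law, between ambient modules $V^\sfn$ and $W^\sfn$. Applying Theorem \ref{th:polyone} to \emph{this} polynomial law yields a homogeneous $C^1$-morphism between $\sfG^\sett{n}\sfG^{\mathsf{n-1}}U = \sfG^\sfn U$ and $\sfG^\sfn U'$ that extends it — and, critically, is compatible with the ``new'' $n$-th groupoid structure introduced by the outermost $\sfG^\sett{n}$. Compatibility with all $n-1$ preceding groupoid structures is automatic: since each such structure on $\sfG^\sfn U$ is obtained by applying the functor $\sfG^\sett{n}$ to the corresponding structure on $\sfG^{\mathsf{n-1}}U$, and our new morphism agrees (on vertex and edge data) with $\sfG^\sett{n}(\bff^{\mathsf{n-1}})$, functoriality of $\sfG^\sett{n}$ carries the $n-1$ old compatibilities upward. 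The diagram (\ref{eqn:fn-1}) commutes by construction, securing the inductive compatibility.

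The main obstacle is the key lemma itself: one must rigorously identify $P^{[1]}$ as a polynomial law and check that this identification is compatible with the multiple evaluation points and constraints entering the higher iterates (cf.\ the explicit formula of Theorem \ref{th:symmetricformula} and the more intricate full cubic formulae). Once this is done, the homogeneous (extended $2n$-fold) version poses no new difficulties: the additional scalars $s_k$ are introduced by the same dual-number mechanism applied a second time per level, and a polynomial law automatically respects their action, yielding morphisms of small $2n$-fold categories $\ol{\bff}^{\ol\sfn}: \sfG^{\ol\sfn} U \to \sfG^{\ol\sfn} U'$ compatible across all levels. Taking $n\to\infty$ delivers the claimed $C^\infty$-laws.
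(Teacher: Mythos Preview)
Your proof is correct and follows essentially the same approach as the paper, which simply reads ``By induction from Theorem \ref{th:polyone}.'' You have unpacked the implicit content of that induction --- in particular the key lemma that $f^\sett{1}$ is again (the base map of) a polynomial law, which is what makes the iteration possible --- so your argument is a faithful elaboration of the paper's one-line proof (note, incidentally, that for general $t$ the relevant extension ring is $\K[X]/(X^2 - tX)$ rather than $\K[\eps]/(\eps^2)$, as in Section \ref{sec:sym}, but this does not affect the argument).
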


\begin{proof}
By induction from Theorem \ref{th:polyone}.
\end{proof}

\nin
For general laws, the base map does not always determine uniquely the whole law; however, on the level of the finite part,
they do:

\begin{theorem}\label{th:finitelaws}
Every set-map $f:U \to U'$ induces unique morphisms of finite part
$n$-fold groupoids, resp.\ $2n$-fold cats, compatible with each other, such  that $f=\bff^\emptyset$.
\end{theorem}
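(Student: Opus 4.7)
The plan is to reduce the statement to the universal property of pair groupoids, using the explicit product decomposition of the finite part provided by Theorem \ref{th:Full1}:
\[
\SfG^\sfn U \cong \pG^\sfn U \times \pG^\sett{2,\ldots,n}\K_1^\times \times \pG^\sett{3,\ldots,n}\K_2^\times \times \cdots \times \K_n^\times .
\]
The key elementary fact is that for any set map $f : M \to N$, there is a unique morphism of pair groupoids $\pG f : \pG M \to \pG N$ extending $f$, namely $(x,y)\mapsto (f(x),f(y))$; indeed, in a pair groupoid the arrow set is just $M\times M$, and any groupoid morphism must send $(x,y)=(x,x)\ast(x,y)=\ldots$ to $(f(x),f(y))$, forced by the base map and by source/target compatibility. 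Iterating this observation yields, for each finite $N\subset \N$, a unique morphism of $|N|$-fold groupoids $\pG^N f : \pG^N M \to \pG^N N$ extending $f$.

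First, at level $k=0$ we set $\bff^\emptyset := f$. Assuming $\bff^{\mathsf{k-1}}$ has been constructed, define $\bff^\sfk$ componentwise under the above decomposition: on the factor $\pG^\sfk U$ take $\pG^\sfk f$, and on each factor of the form $\pG^{\sett{j+1,\ldots,k}}\K_j^\times$ or $\K_k^\times$ take the identity. This is a morphism of $k$-fold groupoids because a direct product of morphisms of $k$-fold groupoids is again one, and each factor morphism is either an iterated pair-groupoid morphism (hence a morphism in each of its groupoid structures) or the identity. Compatibility with the diagram (\ref{eqn:fn-1}) is immediate: the top-down projections from $\SfG^\sfk U$ factor through $\SfG^{\mathsf{k-1}} U \times \K_k^\times$ by projecting the "new" pair-groupoid layer onto its base, and by construction $\bff^\sfk$ restricts on that base to $\bff^{\mathsf{k-1}}\times\id_{\K_k^\times}$.

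Uniqueness is then forced level by level. Any candidate $\bff^\sfk$ must have the prescribed base map $\bff^{\mathsf{k-1}}$ under the projection of diagram (\ref{eqn:fn-1}); together with the requirement of being a morphism of the $k$-th groupoid structure (whose edge category has pair-groupoid form on the finite part, by Theorem \ref{th:Full1}), this pins down $\bff^\sfk$ on each vertex set by the pair-groupoid rigidity recalled above. Since all factors are (iterated) pair groupoids and the only one on which $f$ acts non-trivially is $\pG^\sfk U$, no freedom remains.

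For the homogeneous statement, the same argument applies verbatim with $\sfn$ replaced by $\ol \sfn$: the finite part $\SfG^{\ol \sfn}U$ admits an analogous product decomposition into iterated pair groupoids of $U$ and of $\K^\times$ (by applying $\SfG^\settt{k}$ at each step, as in the inductive construction of Theorem \ref{th:Full2}), and pair-groupoid rigidity again forces uniqueness. The main point to watch is bookkeeping in the product decomposition to make sure that the action of $\SfG^\sett{k}$ (resp.\ $\SfG^\settt{k}$) on an iterated pair groupoid is again an iterated pair groupoid, so that the inductive step remains within the class where rigidity applies; this is guaranteed by the finite-part formula in Theorem \ref{th:Full1}. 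No genuine obstacle arises, since everything on the finite side is controlled by a single set-theoretic datum, namely $f$ itself.
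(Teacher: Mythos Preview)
Your argument is correct and, at its core, is the same induction as the paper's two-line proof: for $n=1$, over $\K^\times$ the difference factorizer is the ordinary difference quotient $f^{[1]}(v_0,v_1,t)=t^{-1}(f(v_0+tv_1)-f(v_0))$, hence uniquely determined by $f$; then induct. What you have done is recast this $n=1$ step in structural language: the formula for the difference quotient is precisely the affine-coordinate expression of the isomorphism $\SfG^\sfone U \cong \pG U \times \K^\times$ together with pair-groupoid rigidity, and your use of the product decomposition of Theorem~\ref{th:Full1} simply records the effect of iterating that step.

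The gain of your formulation is that it makes the mechanism (rigidity of pair groupoids under the anchor map) explicit rather than hidden in a formula, and it transfers without change to the $2n$-fold case once one knows that $\SfG^{\ol\sfone}U$ is likewise a product of pair groupoids (Definition~\ref{def:finitepart1}, citing Part~I, Theorem~3.6). The paper's version is terser but relies on the reader unpacking ``by induction'' in exactly the way you have spelled out.
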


\begin{proof}
For $n=1$, recall from Part I, that, over $\K^\times$,
 the difference factorizer can be expressed as a difference quotient of the base map, and is uniquely defined in this way.
 For $n>1$, the claim now follows by induction. 
\end{proof}

\nin
In the topological setting, we can replace $\SfG$ by $\sfG$, ``by density'':

\begin{theorem}\label{th:topCn}
Assume $\K$ is a topological ring with dense unit group and $(U,V)$, $(U',V')$ are non-empty open linear sets in topological
$\K$-modules.
Then, for all $n \in \N\cup \{\infty\}$,  we have a bijection between

\begin{itemize}
\item
 $C^n$-maps $f:U \to U'$,
\item 
continuous $C^n$-laws $\bff^\sfn : \sfG^\sfn U \to \sfG^\sfn U'$, 
\item
continuous homogeneous $C^n$-laws $\bff^{\ol\sfn} : \sfG^{\ol\sfn} U \to \sfG^{\ol\sfn} U'$.
\end{itemize}
The $C^n$-laws $\bff^\sfn$, resp.\ $\bff^{\ol \sfn}$ are then entirely defined by the base map $f$.
\end{theorem}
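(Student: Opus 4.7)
The plan is to proceed by induction on $n$, with the base case $n=1$ supplied by Theorem \ref{th:top1}, and with the inductive step powered by the recursive identity $\sfG^\sfn = \sfG^{\{n\}} \circ \sfG^{\mathsf{n-1}}$ from Theorem \ref{th:Full1}. The three assertions to match are set-theoretic $C^n$ in the sense of Definition \ref{def:full}, continuous $n$-fold groupoid morphisms, and continuous $2n$-fold category morphisms; the underlying observation is that, at each level, passing from level $k-1$ to level $k$ amounts to applying a first-order functorial construction whose behaviour is already controlled by Theorem \ref{th:top1} and Theorem \ref{th:polyone}.

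First I would handle uniqueness by a density argument. By Theorem \ref{th:finitelaws}, the base map $f$ determines the finite-part laws $\SfG^\sfn f$ and $\SfG^{\ol\sfn} f$ completely. Inductively, the bottom vertex set $\sfG^{\emptyset;\sfn} U = U \times \K^n$ (Theorem \ref{th:topvertex}) has $U \times (\K^\times)^n$ as a dense subset because $\K^\times$ is dense in $\K$; an induction on $n$ using the pullback description of the higher vertex sets (Definition \ref{def:Cn-cat} and Theorem \ref{la:pullback}) shows that $\SfG^\sfn U$ is dense in $\sfG^\sfn U$. Continuity then forces any two continuous laws with the same base map to agree.

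For existence, assume $f$ is of class $C^n$ in the full cubic sense. By Definition \ref{def:full}, $f^{[1]}$ is $C^{n-1}$, so the induction hypothesis gives a continuous $C^{n-1}$-law $\bff^{\mathsf{n-1}}_{f^{[1]}}$ on $\sfG^{\mathsf{n-1}} U^{\sett{1}}$ as well as one $\bff^{\mathsf{n-1}}_f$ on $\sfG^{\mathsf{n-1}} U$. Applying $\sfG^{\{n\}}$ to $\bff^{\mathsf{n-1}}_f$ and comparing with $\bff^{\mathsf{n-1}}_{f^{[1]}}$ under the canonical identification of top vertices produces continuous vertex maps for $\bff^\sfn$; commutativity of the diagram (\ref{eqn:fn-1}) and the groupoid morphism axioms on each $2$-face of the hypercube $\cP(\sfn)$ are then automatic, being the image under the functor $\sfG^{\{n\}}$ of relations already verified at rank $n-1$. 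Conversely, if $\bff^\sfn$ is continuous, then its top vertex map (in affine coordinates) is, up to projection, precisely $f^{[n]}$; continuity of $f^{[n]}$ together with the induction hypothesis $f \in C^{n-1}$ yields $f \in C^n$. The homogeneous $\sfG^{\ol\sfn}$ variant is then handled by the same induction, because the extra structure on second-kind edges is polynomial in the fibre coordinates (Theorem \ref{th:polylaws}) and hence continuous and determined by $f$ as soon as the underlying $\sfG^\sfn$-law is. The case $n = \infty$ follows by intersecting the bijections for all finite $n$.

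The main obstacle will be the bookkeeping needed to check that the $n$-fold groupoid axioms at rank $n$ genuinely reduce to the $(n-1)$-fold axioms plus the rank-one axioms under the functor $\sfG^{\{n\}}$. This is conceptually clean — it is essentially the statement that $\sfG^{\{n\}}$ preserves pullbacks, so that all partial compositions stay well-defined and commuting pairs stay commuting — but requires carefully invoking Theorem \ref{la:pullback} and the compatibility of the construction with the diagrams in Theorem \ref{th:Full1} at each of the $\binom{n}{2}$ faces. Once this is in place, the bijection between $f$ and $\bff^\sfn$ (respectively $\bff^{\ol\sfn}$) follows, and the uniqueness clause of the theorem is a direct consequence of the density step above.
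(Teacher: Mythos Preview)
Your proposal is correct and follows the same approach as the paper: induction on $n$ with base case Theorem \ref{th:top1}. The paper's own proof is in fact nothing more than the two sentences ``For $n=1$, this is Theorem \ref{th:top1}, and for $n>1$, it follows by induction,'' so your write-up supplies considerably more detail than the author does---in particular the density argument for uniqueness and the explicit unpacking of how the top-vertex map recovers $f^{[n]}$, both of which are left implicit in the paper. Your closing worry about face-by-face bookkeeping is legitimate but already absorbed by Theorem \ref{th:Full1}, which establishes the $n$-fold groupoid structure; once that is in place, the morphism conditions at rank $n$ follow from functoriality of $\sfG^{\{n\}}$ applied to the rank-$(n{-}1)$ morphism, so no additional verification is needed.
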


\begin{proof}
For $n=1$, this is Theorem \ref{th:top1}, and
for $n>1$, it follows by induction.
\end{proof}

\begin{definition}\label{def:derived} For $f:U \to U'$ as in Theorem \ref{th:finitelaws}, the unique laws determined by $f$
will be denoted
$$
\SfG^\sfn f : \SfG^\sfn U \to \SfG^\sfn U' , \qquad \SfG^{\ol \sfn} f : \SfG^{\ol\sfn} U \to \SfG^{\ol\sfn} U' ,
$$
and if $f$ is a $C^n$-map as  in Theorem \ref{th:topCn}, the unique  induced laws are denoted by
$$
\sfG^\sfn f := \bff^\sfn: \sfG^\sfn U \to \sfG^\sfn U', \qquad
\sfG^{\ol \sfn} f:= \bff^{\ol \sfn} :\sfG^{\ol\sfn} U \to \sfG^{\ol\sfn} U' .
$$
In both situations,   we say that these laws are obtained ``by deriving the map $f$''.
We also use such notation and language when $f$ is polynomial law, and especially
when $f$ is constant, linear or bilinear. 
\end{definition}

\nin
The ``derived laws'' of constant, linear and bilinear polynomials are computed by induction, using the results for
$n=1$ in Part I, Section 4.

\subsection{Scaleoids, and the terminal map} 

\begin{definition}
By {\em scaledoid}, we mean the family of $n$-fold groupoids, resp.\ of $2n$-fold categories 
$
\sfG^\sfn 0,  \sfG^{\ol \sfn} 0$,
for $n\in \N$,
where $0=(\{ 0 \},\{ 0\})$ is the zero-space, seen as linear set over $\K$. 
\end{definition}

\nin
For $n=1$, we have $\sfG^\sfone 0 = (\K, \K)$ with trivial groupoid structure, so, by induction, 
$\sfG^\sfn 0$ really is an $(n-1)$-fold groupoid.
Likewise, $\sfG^{\ol \sfone} 0 = (\K , \K^2)$ is the left action category of $\K$ on itself (Part I, Theorem 3.6), and so
$\sfG^{\ol \sfn} 0$ really is a small $(2n-1)$-fold cat. 
Its structure is complicated; however, for the finite part, we easily get

\begin{theorem}
The finite part of $\sfG^\sfn 0$ is isomorphic
to a product of iterated pair groupoids
$$
\SfG^\sfn 0 \cong
 \pG^\sett{2,3,\ldots,n} \K_1^\times \times \pG^\sett{3,4,\ldots,n} \K_2^\times  \times \ldots \times  
\pG^\sett{n} \K_{n-1}^\times \times \K_n^\times .
$$
\end{theorem}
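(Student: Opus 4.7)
The plan is to derive this theorem as an immediate specialization of Theorem \ref{th:Full1} to the terminal object $U = 0$. Since the pair groupoid of a single point is again a single point with the trivial groupoid structure, we have $\pG^\sfn 0 \cong 0$, so the product formula for $\SfG^\sfn U$ given in Theorem \ref{th:Full1} collapses, when $U = 0$, to exactly the asserted product of iterated pair groupoids of the scalar factors $\K_k^\times$. In this sense, no further work is needed; the statement is recorded as a separate theorem only because $\sfG^\sfn 0$ (the scaleoid) plays a distinguished role in the theory.

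Alternatively, one can give a direct inductive argument in parallel with the proof of Theorem \ref{th:Full1}, which would go as follows. For $n = 1$, $\sfG^\sfone 0 = (\K,\K)$ with trivial groupoid structure, and its finite part is just $\K_1^\times$, matching the empty product times $\K_1^\times$ in the claim. For the inductive step, write $\SfG^\sfn 0 = \SfG^\sett{n}(\SfG^{\mathsf{n-1}} 0)$ using the defining recursion of $\SfG^\sfn$ in Theorem \ref{th:Full1}. By Part I (and the case $n = 1$ of Theorem \ref{th:Full1}), applying $\SfG^\sett{n}$ to any space $X$ yields $\pG^\sett{n} X \times \K_n^\times$. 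Setting $X = \SfG^{\mathsf{n-1}} 0$, which by the induction hypothesis is a direct product of iterated pair groupoids of $\K_k^\times$ ($k = 1,\ldots,n-1$), we obtain $\pG^\sett{n} X \times \K_n^\times$.

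It then remains to commute $\pG^\sett{n}$ with the direct product, i.e., to use that the pair groupoid construction preserves cartesian products:
\[
\pG^\sett{n}(Y_1 \times \cdots \times Y_{n-1}) \;\cong\; \pG^\sett{n} Y_1 \times \cdots \times \pG^\sett{n} Y_{n-1}.
\]
Applied factor by factor, this turns $\pG^\sett{n} \pG^\sett{k+1,\ldots,n-1} \K_k^\times$ into $\pG^\sett{k+1,\ldots,n} \K_k^\times$ for each $k$, yielding exactly the claimed formula. This last step is the only nontrivial point; it follows from the general functorial compatibility of $\pG$ with cartesian products (Appendix \ref{app:pg}), which is itself a straightforward consequence of the universal property of the pair groupoid.

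The main obstacle, if any, is not computational but notational: one has to keep careful track of which index set $N \subset \N$ labels each copy of the pair-groupoid functor and which ordered scalar copy $\K_k^\times$ it is applied to. Once the recursion $\sfG^\sfn = \sfG^\sett{n} \circ \sfG^{\mathsf{n-1}}$ and the behaviour of $\SfG^\sett{k}$ on a space are fixed, the result is essentially bookkeeping, and indeed the cleanest presentation is simply to say ``set $U = 0$ in Theorem \ref{th:Full1}''.
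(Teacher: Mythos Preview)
Your proposal is correct and follows exactly the paper's own approach: the paper's proof is the single sentence ``This follows from Theorem \ref{th:Full1} for $U = 0$.'' Your additional remark that $\pG^\sfn 0$ is trivial (so the first factor in the product formula of Theorem \ref{th:Full1} drops out) makes explicit the one point the paper leaves to the reader, and your alternative inductive argument is just an unrolling of the same specialization.
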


\begin{proof}
This follows from Theorem \ref{th:Full1} for $U = 0$.
\end{proof}

\begin{theorem}[The terminal map] 
Assume $(U,V)$ is a linear set.
\begin{enumerate}
\item
The constant map $0_U:U \to 0$ induces [homogeneous] $C^\infty$ laws
$$
0_U^\sfn: \sfG^\sfn U \to \sfG^\sfn 0 , \qquad 
0_U^{\ol \sfn}: \sfG^{\ol \sfn} U \to \sfG^{\ol\sfn} 0 . 
$$
\item
For each $x \in U$, the map $0 \to U$, $0 \mapsto x$ induces [homogeneous] $C^\infty$-laws
$\sfG^\sfn 0 \to \sfG^\sfn U$ and
$\sfG^{\ol \sfn} 0 \to \sfG^{\ol \sfn}0$ which are sections of the morphisms from (1).
\item
Every $C^n$-law $\bff^\sfn$ is compatible with terminal maps  in the sense  that 
$$
0_{U'}^\sfn \circ \bff^\sfn = 0_U^\sfn.
$$
\end{enumerate}
\end{theorem}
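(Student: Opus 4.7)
The plan is to obtain parts (1) and (2) directly from the polynomial-law machinery already developed, and to reduce (3) to a uniqueness statement proved by induction on $n$.

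For (1), the constant map $0_U : U \to 0$ is a (degree-zero) polynomial law in the sense of Part I, Def.\ 4.12, so Theorem \ref{th:polylaws} immediately produces homogeneous $C^\infty$-laws $0_U^\sfn$ and $0_U^{\ol \sfn}$. For (2), the same observation applies to $\iota_x : 0 \to U$, $0 \mapsto x$, which is again a constant polynomial law; Theorem \ref{th:polylaws} yields homogeneous $C^\infty$-laws $\iota_x^\sfn$ and $\iota_x^{\ol \sfn}$. The section claim follows because $0_U \circ \iota_x = \id_0$ at the base level and the construction $\sfG^\sfn$, iterated from the copies $\sfG^\sett{k}$ of Section \ref{sscec:k-generation}, respects composition of polynomial laws; hence $0_U^\sfn \circ \iota_x^\sfn$ equals the law derived from $\id_0$, which is $\id_{\sfG^\sfn 0}$. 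The homogeneous variant is handled identically, using $\sfG^{\ol \sfn}$ in place of $\sfG^\sfn$.

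For (3), the composite $0_{U'}^\sfn \circ \bff^\sfn$ is a $C^n$-law $\sfG^\sfn U \to \sfG^\sfn 0$ whose base map is $0_{U'} \circ f = 0_U$. The claim therefore reduces to showing that \emph{any} $C^n$-law $\sfG^\sfn U \to \sfG^\sfn 0$ with base map $0_U$ must equal $0_U^\sfn$. I would argue this by induction on $n$. The case $n=0$ is tautological. For the inductive step, I invoke the compatibility diagram \eqref{eqn:fn-1} with $U'$ replaced by $0$: the lower horizontal arrow is forced to be $\bff^{\mathsf{n-1}} \times \id_{\K_n}$, and the inductive hypothesis identifies $\bff^{\mathsf{n-1}}$ with $0_U^{\mathsf{n-1}}$. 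Since $\sfG^\sfn U = \sfG^\sett{n}(\sfG^{\mathsf{n-1}} U)$ is a groupoid over $\sfG^{\mathsf{n-1}} U \times \K_n$ whose structure maps are the explicit ones from Definition \ref{def:G1} applied with base $\sfG^{\mathsf{n-1}} U$, the requirement that $\bff^\sfn$ be a morphism of these groupoids, compatible with the projections, the zero section and the groupoid law, pins down $\bff^\sfn$ at every vertex of the hypercube $\cP(\sfn)$.

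The main obstacle is precisely this uniqueness step. In the topological framework of Theorem \ref{th:topCn} it is automatic, since there a $C^n$-law is determined by its base map and (3) becomes a tautology. In the purely algebraic setting, a general $C^n$-law carries independent ``higher-order data'' not dictated by its base map; the saving feature here is that the target $\sfG^\sfn 0$ is essentially \emph{pure scale}, with all non-trivial content confined to the scalar components, and those are exactly the ones fixed by the factor $\id_{\K_k}$ appearing in \eqref{eqn:fn-1}. Turning this heuristic into a clean induction---walking through top, bottom, and intermediate vertices of $\cP(\sfn)$ and verifying that no independent choice survives at any stage---is the technical heart of the proof.
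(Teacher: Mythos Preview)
Your proposal is correct and follows essentially the same route as the paper. Parts (1) and (2) are handled identically: constant maps are polynomial, so Theorem~\ref{th:polylaws} applies, and functoriality gives the section property. For part (3), the paper argues slightly more directly---it observes that compatibility \eqref{eqn:fn-1} forces $f^\sett{1}(x,v,t)=(f(x),f^{[1]}(x,v,t),t)$, so the $t$-component is preserved and composing with $0_{U'}^\sfone$ yields $0_U^\sfone$, then invokes induction---whereas you recast this as a uniqueness claim for $C^n$-laws into $\sfG^\sfn 0$; but the underlying mechanism (the $\id_{\K_k}$ factor in \eqref{eqn:fn-1} pinning down all scalar components, which is all that survives in $\sfG^\sfn 0$) is the same, and you identify it correctly.
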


\begin{proof}
(1) and (2) follow from Theorem \ref{th:polylaws} and  the fact that constant maps are polynomial, and (3) follows from the compatibility 
condition (\ref{eqn:fn-1}): for $n=1$, it implies that $f^\sett{1}(x,v,t)=(f(x),f^{[1]}(x,v,t),t)$, whence
$0_{U'}^\sfone \circ \bff^\sfone = 0_U^\sfone$, and by induction the claim follows. 
\end{proof}

\subsection{$C^\infty$-manifold laws, and general $C^\infty$-spaces}\label{sec:manifolds}
Following Subsection 5.1 of Part I, we may now define {\em $C^n$-manifold laws over $\K$}, and prove that,
if the manifold law is handy, it gives rise
to an $n$-fold groupoid $\sfG^\sfn M$, which we call the {\em $n$-fold magnification of $M$},
 resp.\ a small $2n$-fold cat $\sfG^{\ol \sfn} M$,
 everything projecting to the scaleoid
$\sfG^\sfn 0$, resp.\ to $\sfG^{\ol \sfn}0$. 
Moreover, the {\em finite part} $\SfG^\sfn M$ admits a decomposition into interated pair groupoids, as stated for $M=U$ in
Theorem \ref{th:Full1}.
If the manifold law is not assumed to be handy, all statements remain valid for {\em local} structures, as in Part I. 

\ssk
Since the general category of $C^\infty$-manifolds thus defined is not cartesian closed and lacks existence of inverse images
(Part I, Section 5.2),
one will wish to define more general categories of $C^\infty$-spaces:
these shall be concrete categories of small $2n$-fold cats, projecting to the final object ``scaleoid'',
 and containing finite parts of iterated pair groupoids lying over the finite part of the scaleoid.
I will come back to such issues in subsequent work.

%where ``locality'' is defined with respect to families of (usual, but not necessarily Hausdorff) topologies, taking over the r\^ole of
%Grothendiek topologies from the more advanced topos-theoretic approaches.

%%%%%%%%%%%%%%%%%%%%%%%%% SECTION 3
\section{Symmetric cubic calculus}\label{sec:sym}
%%%%%%%%%%%%%%%%%%%%%%%%%

\subsection{The $n$-fold groupoids $\sfGsy^\sfn_\ttt U$ and  $\sfGsy^\sfn U$}
The basic principle of symmetric cubic
 calculus is to apply the cat rule  $\sfG_t$ (Definition \ref{def:G1}) over and over again, by varying the value of
$t$, so that after $n$ steps we get an $n$-fold cat rule  $\sfGsy^\sfn_\ttt := \sfG_{t_n}\circ \ldots \circ \sfG_{t_1}$. 
On the technical level, the  procedure is simpler than in full
cubic calculus since each of the functors $\sfG_t$ {\em preserves cartesian products} (they are examples of 
``product preserving functors'' in the sense of \cite{KMS93}), so the discussion of pullbacks
from Appendix \ref{app:pullback} can be skipped. As a drawback, implementation of ``homogeneity'' into the set-up is less natural than in the
preceding section, so we will mainly speak about $n$-fold groupoids and not  about $2n$-fold cats.

\begin{theorem}[The symmetric cubic $n$-fold groupoids]\label{th:Sym1}
Let $(U,V)$ be a linear set and $n \in \N$, and fix $\ttt = (t_1,\ldots,t_n)\in \K^n$.
Then there exists a unique $n$-fold groupoid
$$
\sfGsy^\sfn_\ttt U = \bigl( \sfGsy_\ttt^{\alpha; \sfn} U \bigr)_{\alpha \in \cP(\sfn)} ,
$$
such that $\sfGsy^\sfone_t U$ is the groupoid defined in Definition \ref{def:G1}, and 
$\sfGsy_\ttt^\sfn U$ is the $n$-fold groupoid 
obtained by applying $\sfG^\sett{n}_{t_n}$ to the data of the $(n-1)$-fold 
groupoid $\sfGsy^{\mathsf{n-1}}_{t_1,\ldots,t_{n-1}}U$, that is,
$$
\sfGsy^\sfn_\ttt U = \sfGsy^\sett{1,\ldots,n}_\ttt U := \sfG^\sett{n}_{t_n} (\sfGsy^\sett{1,\ldots,n-1}_{t_1,\ldots,t_{n-1}}U) 
= (\sfG^\sett{n}_{t_n} \circ \sfG^\sett{n-1}_{t_{n-1}} \circ \cdots \circ \sfG^\sett{1}_{t_n}) U .
$$
\end{theorem}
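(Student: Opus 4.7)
The proof plan is to proceed by induction on $n$, mirroring the strategy used for Theorem~\ref{th:Full1} but exploiting the crucial simplification that, for a fixed scalar $t$, the functor $\sfG^\sett{k}_{t}$ preserves cartesian products. The base case $n=1$ is settled by Definition~\ref{def:G1} applied with the variable $t_1$ frozen at the value $t$: the groupoid $\sfGsy^\sfone_t U$ has object set $U$, morphism set $U_t$, and the groupoid operations are the restrictions of those of $\sfG^\sfone U$ to the fibre over $t$. Uniqueness at this stage is automatic since the definition leaves no freedom.

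For the inductive step, I would assume the $(n-1)$-fold groupoid $\sfGsy^{\mathsf{n-1}}_{t_1,\ldots,t_{n-1}} U$ is given, with its family of $2^{n-1}$ vertex sets $\sfGsy^{\alpha;\mathsf{n-1}}_{t_1,\ldots,t_{n-1}} U$ indexed by $\alpha \in \cP(\mathsf{n-1})$, together with its edge projections, zero sections, and partial compositions. I would then apply the functor $\sfG^\sett{n}_{t_n}$ to every vertex set and every structure map. This immediately produces $2^n$ vertex sets indexed by $\cP(\sfn)$: for $\alpha \subset \mathsf{n-1}$ one obtains a pair consisting of $\sfG^{\emptyset;\sett{n}}_{t_n}(\sfGsy^{\alpha;\mathsf{n-1}})$ and $\sfG^{\sett{n};\sett{n}}_{t_n}(\sfGsy^{\alpha;\mathsf{n-1}})$, corresponding to the vertices $\alpha$ and $\alpha \cup \{n\}$ of $\cP(\sfn)$.

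The two things to verify are then: (a)~for each of the $n$ directions there is a well-defined groupoid law; (b)~any two of these $n$ groupoid structures are compatible, i.e.\ the structure maps of one are morphisms for the other. For direction $n$, the groupoid structure is delivered directly by applying $\sfG^\sett{n}_{t_n}$. For directions $1,\ldots,n-1$, the structure maps inherited from $\sfGsy^{\mathsf{n-1}}_{t_1,\ldots,t_{n-1}} U$ survive because $\sfG^\sett{n}_{t_n}$ is a functor, hence sends groupoids to groupoids; the point where Theorem~\ref{la:pullback} was needed in the full case becomes trivial here, since $\sfG^\sett{n}_{t_n}$ preserves pullbacks (and in particular the fibre products over which the old compositions are defined), so the domain $\sfG^\sett{n}_{t_n}(\sfGsy^{\alpha;\mathsf{n-1}} \times_{\sfGsy^{\beta;\mathsf{n-1}}} \sfGsy^{\alpha;\mathsf{n-1}})$ of the image composition law coincides with the fibre product of the images. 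Compatibility between direction $n$ and each of the earlier directions is automatic because all old structure maps are of the form $\sfG^\sett{n}_{t_n}(\varphi)$, and such maps commute with the $n$-th composition law $\ast_n$ by naturality.

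The main obstacle, and the place where I would take most care, is verifying the axioms of an $n$-fold groupoid (Appendix~\ref{app:nfoldcats}) at each face of the hypercube $\cP(\sfn)$, i.e.\ that the four projections and two compositions assigned to any two-dimensional face constitute a double groupoid. This reduces to the analogous claim for faces containing direction $n$, which follows from functoriality of $\sfG^\sett{n}_{t_n}$ applied to the corresponding edge of $\cP(\mathsf{n-1})$, and to faces inside $\cP(\mathsf{n-1})$, which holds by the inductive hypothesis and is preserved because $\sfG^\sett{n}_{t_n}$ sends double groupoids to double groupoids. Uniqueness then drops out of the construction: at each stage the only freedom is in the data prescribed by applying $\sfG^\sett{n}_{t_n}$ to the structure of the previous level, and that is forced.
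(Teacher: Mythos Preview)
Your proposal is correct and follows essentially the same approach as the paper: the paper's proof consists of the single sentence ``We use the same arguments as in the proof of Theorem~\ref{th:Full1},'' and your write-up is precisely an unpacking of what those arguments become in the symmetric setting, including the observation (made in the paper's introduction to Section~\ref{sec:sym}) that the pullback issues from Appendix~\ref{app:pullback} simplify because $\sfG_{t}$ is product preserving.
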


\begin{proof}
We use the same arguments as in the proof of Theorem \ref{th:Full1}. 
\end{proof}

Next, to  describe vertex sets and edge projections we need the following definition.
A by-product of the computations will be the non-trivial fact that there is a natural action of the group $S_n$
of permutations on $n$ letters.

\begin{definition}[The ring action]\label{def:K-action}
For any $\alpha = \{ \alpha_1,\ldots,\alpha_\ell \} \in \cP(\sfn)$, let $V_\alpha$ be a copy of $V$, on which we let act the ring $\K^n$
(direct product of rings) in the following way:
$$
\forall \ttt \in \K^n, \forall v_\alpha \in V_\alpha : \quad
\ttt . v_\alpha :=  t_{\alpha_1} \cdots t_{\alpha_\ell} \cdot   v_\alpha .
$$
\end{definition}

\begin{theorem}[Structure of symmetric $n$-fold groupoids]\label{th:Sym2}
Let $(U,V)$ be a linear set and $n \in \N$, and fix $\ttt = (t_1,\ldots,t_n)\in \K^n$.
\begin{enumerate}
\item
For $\alpha \in \cP(\sfn)$, the vertex set $\sfGsy_\ttt^{\alpha;\sfn} U$ of $\sfGsy^\sfn_\ttt U$
is given by
$$
U_\ttt^{\alpha;\sfn}:=
\Bigl\{
(v_\beta)_{\beta \in \cP(\alpha)}  \mid \, \forall \beta \in \cP(\alpha):
v_\beta \in V_\beta, \, 
\sum_{ \gamma  \in \cP(\beta)} \ttt.  v_\gamma
\in U \Bigr\} .
$$
The bottom vertex set is $U_\ttt^{\emptyset;\sfn} = U$.
\item
For an edge $(\beta,\alpha)$ with $\alpha = \beta \cup \{ i \}$, the two edge projections are
\begin{align*}
\pi_0 \bigl( (v_\gamma)_{\gamma \in \cP(\alpha)} \bigr) &= (v_\gamma)_{\gamma \in \cP(\beta)}
\cr
\pi_1 \bigl( (v_\gamma)_{\gamma \in \cP(\alpha)} \bigr) &= (v_\gamma + t_i v_{\gamma \cup \{ i\} })_{\gamma \in \cP(\beta)},
\end{align*}
the zero section $z=z_{\beta,\alpha;\sfn}$ comes from the natural inclusion $\cP(\beta) \subset \cP(\alpha)$:
$$
z  \bigl( (v_\gamma)_{\gamma \in \cP(\beta)} \bigr) = (v_\gamma)_{\gamma \in \cP(\beta)} ,
$$
and the corresponding groupoid composition law $\ast = \ast_{\beta,\alpha;\sfn}$ is given by
$$
(v_\gamma' )_{\gamma \in \cP(\alpha)} \ast  (v_\gamma)_{\gamma \in \cP(\alpha)} =
(v_\gamma, v_{\gamma\cup \{ i \} } + v_{\gamma \cup \{ i\} }' )_{\gamma \in \cP(\beta) }.
$$
\item
The $2^{\vert \alpha \vert}$ top-down projections from $U^{\alpha;\sfn}_\ttt$
 to the bottom vertex set obtained by composing target and source mappings are given by, for $\beta \in \cP(\alpha)$,
$$
\xi_{\alpha,\beta}: 
U^{\alpha,\sfn}_\ttt \to U, \quad 
(v_\gamma)_{\gamma \in \cP(\alpha)} \mapsto x_\beta := \sum_{\gamma 
\in \cP(\beta)} \ttt .  v_\gamma .
$$
\item
The ``flip'' or  ``exchange map'' is an isomorphism of double groupoids: 
$$
\sfGsy^\sftwo_{t_1,t_2} U \to \sfGsy^\sftwo_{t_2,t_1} U, \quad
(v_0,v_1,v_2,v_{12}) \mapsto (v_0,v_2,v_1,v_{12}).
$$
In particular, when $t_1=t_2$, the double groupoid is {\em edge-symmetric}. 
\item
For any $n \in \N$ and $\sigma \in S_n$, there is a natural  isomorphism 
$$
\sfGsy^\sfn_{(t_1,\ldots,t_n)} U \to \sfGsy^\sfn_{t_{\sigma(1)},\ldots,t_{\sigma(n)}} U, 
\quad 
(v_\alpha)_{\alpha \subset \sfn} \mapsto 
(v_{\sigma (\alpha)} )_{\alpha \subset \sfn} ,
$$
and for $t_1 = \ldots = t_n = t$, we get an edge-symmetric $n$-fold groupoid $\sfGsy^\sfn_\ttt U$.
\item
For two linear sets $U \subset V$, $U' \subset W$, and all $\ttt \in \K^n$, we have a natural isomorphism of $n$-fold groupoids
$$
\sfGsy^\sfn_\ttt (U \times U') \cong \sfGsy_\ttt U  \times \sfGsy_\ttt^\sfn U' .
$$
\end{enumerate}
\end{theorem}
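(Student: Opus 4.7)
\medskip

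\noindent\textbf{Proof plan.} The proof proceeds by induction on $n$, exploiting the inductive definition $\sfGsy^\sfn_\ttt U = \sfG^\sett{n}_{t_n}(\sfGsy^{\mathsf{n-1}}_{(t_1,\ldots,t_{n-1})} U)$ established in Theorem \ref{th:Sym1}. For $n=1$ all six claims reduce to Definition \ref{def:G1} (with $\ttt = (t_1)$), so the case $n=1$ is immediate. The bulk of the work is the inductive step from $n-1$ to $n$, which simultaneously yields (1), (2), (3); statements (4)--(6) are then extracted from the explicit formulae of (1)--(3).

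For the inductive step of (1), one fixes $\alpha \in \cP(\sfn)$ and distinguishes two cases. If $n \notin \alpha$, then by the very construction of $\sfG^\sett{n}_{t_n}$ (which only enlarges vertices containing $n$), the vertex $\sfGsy^{\alpha;\sfn}_\ttt U$ agrees with $\sfGsy^{\alpha;\mathsf{n-1}}_{(t_1,\ldots,t_{n-1})} U$ and the formula follows from the induction hypothesis, since no $t_i$ with $i = n$ appears in the sums. If $n \in \alpha$, write $\alpha = \alpha' \cup \sett{n}$ with $\alpha' \subseteq \mathsf{n-1}$. Applying $\sfG^\sett{n}_{t_n}$ to the linear set $L := \sfGsy^{\alpha';\mathsf{n-1}}_{(t_1,\ldots,t_{n-1})}U$ gives the set of pairs $(w,w')$ with $w, w+t_n w' \in L$; relabel the components of $w'$ from $v'_\gamma$ to $v_{\gamma \cup \sett{n}}$ for $\gamma \in \cP(\alpha')$, so that the combined indexing runs over $\cP(\alpha') \sqcup \{\gamma \cup \sett{n} \mid \gamma \in \cP(\alpha')\} = \cP(\alpha)$. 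The conditions $w \in L$ give the claim for $\beta \in \cP(\alpha')$; expanding $w + t_n w' \in L$ using the induction hypothesis and the identity $t_n \cdot (t_{\gamma_1}\cdots t_{\gamma_\ell}) = \ttt.(\text{\textvisiblespace})$ when the index set is augmented by $n$, one sees that the resulting sums consolidate into $\sum_{\delta \in \cP(\beta \cup \sett{n})} \ttt.v_\delta \in U$, producing the condition for the remaining elements of $\cP(\alpha)$.

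The edge projections, zero sections and compositions in (2) are obtained by applying the $(n-1)$-fold-groupoid morphism $\sfG^\sett{n}_{t_n}$ to those coming from the induction hypothesis, reading off the $v_{\gamma \cup \sett{i}}$-components exactly as in Definition \ref{def:G1}. Part (3) is a direct consequence of (2): iterating target/source maps along the chain of edges $\emptyset \subset \{i_1\} \subset \{i_1,i_2\} \subset \cdots \subset \beta$ picks up a summand $\ttt.v_\gamma$ for each $\gamma \in \cP(\beta)$ along which one follows the target projection. For (6), the key observation is that $\sfG^\sett{k}_{t_k}$ is a product-preserving functor (fixing the scalar removes the obstruction analyzed in the full cubic case), so the cartesian product statement follows trivially by iteration.

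The only genuinely non-trivial claims are the flip (4) and the $S_n$-action (5), because the inductive construction is not manifestly symmetric in the $t_i$. However, the formulae supplied by (1)--(3) \emph{are} manifestly $S_n$-equivariant: the index set $\cP(\alpha)$ of each vertex, the conditions $\sum_{\gamma \in \cP(\beta)} \ttt.v_\gamma \in U$, and the ring action of Definition \ref{def:K-action} all commute with permutations $\sigma \in S_n$ acting simultaneously on $\ttt$ and on the labels of the $v_\alpha$. The substitution $v_\alpha \mapsto v_{\sigma(\alpha)}$, $t_i \mapsto t_{\sigma(i)}$ therefore yields a bijection between the vertex sets of $\sfGsy^\sfn_{\ttt} U$ and $\sfGsy^\sfn_{\sigma\ttt} U$ which, by inspection of the formulae in (2), intertwines all edge structures. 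Specializing to a transposition and $n=2$ gives (4); for $t_1 = \cdots = t_n$ we obtain the edge-symmetry. \textbf{The main obstacle} I anticipate is organizational rather than conceptual: keeping the bookkeeping of subsets $\gamma \subseteq \beta \subseteq \alpha \subseteq \sfn$ and the relabelling $\gamma \leftrightarrow \gamma \cup \sett{n}$ clean enough that the collapse of the two conditions of the $\sfG^\sett{n}_{t_n}$-construction into the single condition of (1) is transparent.
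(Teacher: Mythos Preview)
Your proposal is correct and follows essentially the same inductive approach as the paper: both reduce $n=1$ to Definition \ref{def:G1}, push the induction via $\sfGsy^\sfn_\ttt U = \sfG^\sett{n}_{t_n}(\sfGsy^{\mathsf{n-1}}_{(t_1,\ldots,t_{n-1})}U)$, and derive (4)--(5) as a consequence of the manifest $S_n$-equivariance of the explicit formulae in (1)--(3). The only presentational difference is that the paper carries out the step $n=2$ in full detail (writing down all four vertex sets and all four pairs of edge projections, and noting explicitly that the groupoid law derives linearly because vector addition is linear) and then declares that ``for $n\geq 2$, the inductive proof follows exactly the same arguments'', whereas you give the general inductive step directly via the case split $n\notin\alpha$ versus $n\in\alpha$ and the relabelling $\gamma \leftrightarrow \gamma\cup\{n\}$; both routes encode the same computation.
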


\begin{proof}
All assertions are proved by induction.
Concerning (1), for $n=1$, this is simply another notation describing the groupoid $U_{t_1}$.
For $n=2$, let us give explicit formulae describing the double groupoid 
\begin{equation}\label{eqn:diagram}
\begin{matrix}
\sfGsy^{\sftwo;\sftwo}_{t_1,t_2} U  & \rightrightarrows & \sfGsy_{t_2}^{\{ 2 \}; \sftwo} U  \cr
 \downdownarrows & & \downdownarrows   
 \cr
\sfGsy^{\sfone;\sftwo}_{t_1} U  & \rightrightarrows &  U   .
 \end{matrix}
 \end{equation}
Applying Definition \ref{def:G1} in order to compute $U_{t_1,t_2}^{\sftwo,\sftwo} =(U_{t_1}^{[1]})_{t_2}^{[1]}$,  we get 
(cf.\ eqn.\  (\ref{eqn:U-two}))
\begin{equation}\label{eqn:Two}
U_{t_1,t_2}^{\sftwo,\sftwo} = \Bigsetof{ (v_0,v_1,v_2,v_{12}) \in V^4 } {\begin{array}{c}
v_0 \in U \\  v_0 + t_1 v_1 \in U,  \, \, v_0 + t_2 v_2 \in U \\ v_0 + t_1 v_1 + t_2 v_2 +  t_1 t_2 v_{12} \in U \end{array}}
\end{equation} 
projecting to $U_{t_1,t_2}^{\sfone,\sftwo} = \{ (v_0,v_1) \mid v_0 \in U, v_0 + t_1v_1 \in U \}$. 
On the other hand,
$U_{t_1,t_2}^{\{ 2\} ,\sftwo} = \{ (v_0,v_2) \mid v_0 \in U, v_0 + t_2v_2 \in U \}$ is obtained by extending $U$ 
with respect to $t_2$. This describes
the four vertex sets. We describe the four pairs of edge projections
$\pi^{\beta,\alpha;\sftwo}_\sigma$. For the edge $(\beta,\alpha)=(\sfone,\sftwo)=(\{ 1 \}, \{ 1,2 \})$, this is given directly
by the inductive definition:  for $\sigma = 1$ (target),
$$
\pi^{\{ 1 \}, \sftwo}_{t_1,t_2} (v_0,v_1,v_2,v_{12}) = (v_0,v_1) + t_2 (v_2,v_{12}) =
(v_0 + t_2 v_2, v_{2}+t_2 v_{12} ) 
$$
and for $\sigma = 0$ (source), $\pi^{\{ 1 \}, \sftwo}_{t_1,t_2} (v_0,v_1,v_2,v_{12}) = (v_0,v_1)$.
For the edge $(\beta,\alpha)=(\{ 2 \}, \{ 1,2 \})$, we have to apply the functor $\sfG_{t_2}$ to the edge
projection corresponding to the only non trivial edge of the $1$-cube:  for $\sigma = 1$ (target)
by ``deriving'' $v_0 + t_1 v_1$, we get  
$$
\pi^{ \{ 2 \}, \sftwo;\sftwo}_{t_1,t_2} (v_0,v_1,v_2,v_{12}) = (v_0 + t_1 v_1, v_2 +  t_1 v_{12} )
$$
(in full cubic calculus, there will  be here an additional term $t_{12} v_1 + t_2 t_{12} v_{12}$ to be added in the last component,
causing the more complicated structure of full cubic calculus). For $\sigma = 0$ (source), we get
$\pi^{ \{ 2 \}, \sftwo;\sftwo}_{t_1,t_2} (v_0,v_1,v_2,v_{12}) = (v_0,v_2)$. 
The other edge projections are, for $\sigma = 1$,
$$
\pi^{\emptyset,\sfone;\sftwo}_{t_1} (v_0,v_1)=v_0 + t_1 v_1, \qquad
\pi^{\emptyset,\{ 2 \};\sftwo}_{t_2}(v_0,v_2) = v_0 + t_2 v_2,
$$
and for $\sigma=0$ they just project to $v_0$. The groupoid laws are, in all cases, given by usual vector addition in certain
components of $\vvv$. 
Indeed, for the edges $(\emptyset,\sfone)$ and  $(\sfone,\sftwo)$ this follows again from the inductive definition
(and the definition of $\ast$ Part I), and for the remaining edges again we have to apply the functor $\sfG_{t_2}$ to the
groupoid law from the $1$-cube. But since vector addition  is a {\em linear} map $a: V \times V \to V$ (the commutativity of 
$(V,+)$ is important here),  we have $a^\sett{2} (x,u,t_2)=(a(x),a(u),t_2)$ (see Part I), so 
$\sfG_{t_2} a (x,u) = (a(x),a(u))$. 
Note that
 the four conditions appearing in (\ref{eqn:Two}) 
 correspond to the four projections of the top vertex set onto the bottom vertex set $U$, and that (4) is an immediate
 consequence of the explicit formulae. 

For $n \geq 2$, the inductive proof follows exactly the same arguments. 
\end{proof}

\nin
In order to state (5) in the form that $S_n$ acts on a certain $n$-fold groupoid, we incorporate
the $\ttt$-parameter into the groupoid, as we did for $n=1$ in Part I: 

\begin{definition}
For every vertex $\alpha \in \cP(\sfn)$ of the natural $n$-hypercube, 
%writing $\alpha = \{ \alpha_1 , \ldots , \alpha_k \}$ with $\alpha_1 < \ldots < \alpha_k \leq n$, 
%and $\ttt_\alpha = (t_{\alpha_1},\ldots,t_{\alpha_k})$, 
we define 
\begin{equation}
\sfGsy^{\alpha;\sfn} U := \{ (\vvv,\ttt) \in \prod_{\alpha \in \cP(\sfn)} V_\alpha  \times \K^n  \mid \, \vvv \in \sfGsy^{\alpha;\sfn}_{\ttt} U  \} .
\end{equation}
Note that the {\em bottom vertex}, corresponding to $\alpha =\emptyset$, is
\begin{equation}
\sfGsy^{\emptyset;\sfn} U = U \times \K^n .
\end{equation}
Projections, zero sections and composition $\ast$ are defined as in the theorem, and with these structures
$\sfGsy^\sfn U$ is an edge-symmetric $n$-fold groupoid. It comes together with a family of  morphisms
\begin{equation}\label{eqn:sym-n}
\pi_n: \sfGsy^\sfn U \rightrightarrows \sfGsy^{\mathsf{n-1}} U \times \K 
\end{equation} 
and a morphism onto the trivial $n$-fold groupoid $\K^n$:
\begin{equation}
\sfGsy^\sfn U \to  \sfGsy^\sfn 0 = \K^n,\quad  (\vvv,\ttt) \mapsto \ttt
\end{equation}
The inverse image of $(\K^\times)^n$ under this morpism is called the {\em finite part} of $\sfGsy^\sfn U$:
\begin{equation}
\mathsf{Gsyfi}^{\alpha,\sfn} U := \{ (\vvv,\ttt) \in \sfGsy^{\alpha,\sfn} U \mid \forall i : t_i \in \K^\times \} .
\end{equation}
\end{definition}

\begin{theorem}
The finite part is a direct product of an $n$-fold pair groupoid with the trivial $n$-fold groupoid given by $(\K^\times)^n$:
$$
\mathsf{Gsyfi}^{\sfn} U \cong \pG^\sfn (U) \times (\K^\times)^n .
$$
On each vertex set, the isomorphism is given by the ``trivialization maps''
$$
\sfGsy^{\alpha;\sfn}_\ttt  U \to \pG^{\alpha;\sfn} U = \prod_{\gamma \in \cP(\alpha)} U, \quad
(v_\gamma)_{\gamma \in \cP(\alpha)}  \mapsto (x_\gamma)_{\gamma \in \cP(\alpha)},
$$
where $x_\gamma$ is defined as in Item (3) of Theorem \ref{th:Sym2}.
\end{theorem}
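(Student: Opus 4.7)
My plan is to verify the claimed isomorphism directly from the explicit formulas in Theorem \ref{th:Sym2}, with induction only implicit in the overall pattern: for $n=1$, Part I already identified $\sfGsy^\sfone_{t_1} U$ (with $t_1 \in \K^\times$) with $\pG^\sfone U = U \times U$ via $(v_0,v_1)\mapsto (v_0,v_0+t_1 v_1)$, which is precisely the stated trivialization at first order. For general $n$, I fix a vertex $\alpha \in \cP(\sfn)$ and analyze the map $\Phi_\alpha \colon (v_\gamma)_{\gamma \in \cP(\alpha)} \mapsto (x_\gamma)_{\gamma \in \cP(\alpha)}$ with $x_\gamma := \sum_{\delta \in \cP(\gamma)} \ttt.v_\delta$, splitting the argument into well-definedness, bijectivity, and compatibility with the edge structure.

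Well-definedness of $\Phi_\alpha$ into $\pG^{\alpha;\sfn} U = \prod_{\gamma \in \cP(\alpha)} U$ is immediate from Theorem \ref{th:Sym2}(1), whose very definition of the vertex set $\sfGsy^{\alpha;\sfn}_\ttt U$ builds in the condition $x_\gamma \in U$ for all $\gamma \in \cP(\alpha)$. Bijectivity on the finite part is Möbius inversion on the Boolean lattice $\cP(\alpha)$: the relation $x_\gamma = \sum_{\delta \in \cP(\gamma)} \ttt.v_\delta$ inverts to $\ttt.v_\delta = \sum_{\gamma \in \cP(\delta)} (-1)^{|\delta \setminus \gamma|} x_\gamma$, and since $t_j$ is invertible for every $j \in \delta \subset \alpha$, the vector $v_\delta$ is uniquely recovered.

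The central calculation is compatibility with the edge structure from Theorem \ref{th:Sym2}(2); assembling these edgewise checks over all $\alpha$ is what upgrades the collection $(\Phi_\alpha)_\alpha$ to a morphism of $n$-fold groupoids. Fix an edge $(\beta,\alpha)$ with $\alpha = \beta \cup \{i\}$ and identify $\pG^{\alpha;\sfn} U = \pG^{\beta;\sfn} U \times \pG^{\beta;\sfn} U$ via the partition $\cP(\alpha) = \cP(\beta) \sqcup \{\gamma \cup \{i\} : \gamma \in \cP(\beta)\}$. The source projection $\Phi_\beta \circ \pi_0$ tautologically restricts $(x_\gamma)$ to $\gamma \in \cP(\beta)$, matching the pair-groupoid source. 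For the target projection, the key identity $\ttt.(v_\gamma + t_i v_{\gamma \cup \{i\}}) = \ttt.v_\gamma + \ttt.v_{\gamma \cup \{i\}}$ (valid because $i \notin \gamma$) makes the trivialized $\delta$-component of $\pi_1(\vvv)$ telescope to $x_{\delta \cup \{i\}}$, matching the pair-groupoid target. The same telescoping yields composition: for $v,v'$ composable along edge $i$, the $\gamma$-component of $\Phi_\alpha(v' \ast v)$ equals $x_\gamma$ and its $(\gamma \cup \{i\})$-component equals $x'_{\gamma \cup \{i\}}$, matching pair-groupoid concatenation. Finally, the zero section sends $(v_\gamma)_{\gamma \in \cP(\beta)}$ to the family extended by zeros at indices containing $i$; applying $\Phi_\alpha$ gives $y_\delta = x_{\delta \setminus \{i\}}$, the identity diagonal.

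I expect the main obstacle to be purely notational bookkeeping: managing the Boolean-lattice partitions, the $\ttt$-action convention on each $V_\gamma$, and the combinatorics by which inserting a fresh index $i$ splits the sum over $\cP(\gamma \cup \{i\})$ into its two halves. Conceptually nothing unexpected happens, since the statement merely says that once all scales are invertible, Möbius inversion decouples the cubic data into a flat $n$-fold pair groupoid; the only non-trivial analytic ingredient is the single telescoping identity above, applied once per edge.
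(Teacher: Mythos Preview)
Your proof is correct and complete. The paper's own proof is extremely terse: it simply invokes the case $n=1$ from Part I (Theorem 2.7(3)) and then says ``for $n\in\N$, the claim follows by induction'', the inductive step being that applying $\sfG^{\{n\}}_{t_n}$ with $t_n\in\K^\times$ to an $(n{-}1)$-fold pair groupoid produces again a pair groupoid (times a trivial $\K^\times$-factor), since for invertible $t$ the first-order trivialization identifies $\sfG^\sfone_t$ with $\pG^\sfone$.

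Your route is genuinely different: instead of reducing to the $n=1$ case and letting functoriality do the work, you verify the isomorphism directly at level $n$ from the closed formulas of Theorem \ref{th:Sym2}, checking edge by edge that the trivialization intertwines source, target, unit and composition with those of $\pG^\sfn U$. The key computational ingredient is the telescoping identity $\ttt.(v_\gamma + t_i v_{\gamma\cup\{i\}}) = \ttt.v_\gamma + \ttt.v_{\gamma\cup\{i\}}$, together with M\"obius inversion on $\cP(\alpha)$ for bijectivity. This approach is more explicit and self-contained (it does not appeal to Part I beyond motivation), and it makes transparent \emph{why} the trivialization maps are exactly the assembled top-down projections $\xi_\gamma$. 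The paper's inductive argument is shorter and more conceptual, but yours has the advantage of exhibiting the isomorphism concretely at every vertex. One minor point: the composition conventions in the paper for $\ast$ in $\sfGsy^\sfn$ and $\ast$ in $\pG^\sfn$ are written in opposite orders (source-of-left versus target-of-left), so strictly speaking your $\Phi$ satisfies $\Phi(v'\ast v) = \Phi(v)\ast_{\pG}\Phi(v')$; this is harmless for the isomorphism claim but worth noting if you write it out in full.
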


\begin{proof}
For $n=1$, this is Item (3) of Theorem 2.7, Part I, and for $n\in \N$, the claim follows by induction. 
In general, if an $n$-fold groupoid  is isomorphic to an $n$-fold pair groupoid, then the trivialization maps are given by
assembling the $2^n$ top-down projections. 
\end{proof} 

\subsection{Scalar action: homogeneity}
As explained in Part I, the groupoid structure corresponds to vector addition. Concerning  multiplication by scalars,
for various $\ttt$'s, the $n$-fold groupoids $\sfGsy_\ttt$ are related among each other by the following 

\begin{theorem}\label{th:Symscalar}
Let $\ttt, \sss = (s_1,\ldots,s_n) \in \K^n$. 
Then the family of maps, for $\alpha \in \cP(\sfn)$,
$$
\Phi_\sss^{\alpha} : \sfGsy_{\sss \cdot \ttt}^{\alpha,\sfn} U \to \sfGsy_{\ttt}^{\alpha,\sfn} U, \quad
(v_\gamma)_{\gamma \in \cP(\alpha)} \mapsto (\sss . v_\gamma)_{\gamma \in \cP(\alpha)}
$$
defines a morphism $\Phi_\sss$  of $n$-fold groupoids, where
$\sss \cdot \ttt = (s_1  t_1,\ldots , s_n t_n)$, and
$
\sss. v_\gamma = s_{\gamma_1}\cdots s_{\gamma_\ell} v_\gamma
$
is as in Definition \ref{def:K-action}. 
\end{theorem}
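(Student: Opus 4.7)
I would proceed by induction on $n$, using the explicit description of vertex sets, edge projections, zero sections, and groupoid composition laws provided by Theorem \ref{th:Sym2}. The base case $n=1$ is essentially the homogeneity structure already established in Part I: the map $\sfGsy^\sfone_{st_1} U \to \sfGsy^\sfone_{t_1} U$, $(v_0,v_1) \mapsto (v_0, s v_1)$, is well defined because $v_0 + (st_1) v_1 = v_0 + t_1(sv_1)$, it obviously respects source and zero section, and the target component commutes because $v_0 + t_1(sv_1) = v_0 + (st_1) v_1$; and composition is preserved by linearity of $s\cdot$ in the fiber direction.

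For the inductive step, the central combinatorial identity is the commutation of the two ring actions: for any $\gamma \subseteq \sfn$ and any $v_\gamma \in V_\gamma$,
\[
\ttt \, . \, (\sss \, . \, v_\gamma) \; = \; (\sss \cdot \ttt) \, . \, v_\gamma,
\]
which follows immediately from Definition \ref{def:K-action} and commutativity of $\K$. From this, well-definedness is a line: if $(v_\gamma)_{\gamma \in \cP(\alpha)} \in \sfGsy^{\alpha;\sfn}_{\sss \cdot \ttt} U$, then for every $\beta \in \cP(\alpha)$,
\[
\sum_{\gamma \in \cP(\beta)} \ttt \, . \,(\sss . v_\gamma) \;=\; \sum_{\gamma \in \cP(\beta)} (\sss \cdot \ttt) \, . \, v_\gamma \;\in\; U,
\]
so $(\sss . v_\gamma)_{\gamma \in \cP(\alpha)} \in \sfGsy^{\alpha;\sfn}_{\ttt} U$. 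Compatibility with the source projection and the zero section along an edge $(\beta, \alpha)$ with $\alpha = \beta \cup \{i\}$ is trivial since both are restriction of indices and $\Phi_\sss$ acts componentwise. For the target projection, one computes
\[
\sss.(v_\gamma + s_i t_i \, v_{\gamma \cup \{i\}}) \;=\; \sss . v_\gamma \,+\, t_i\,(\sss. v_{\gamma \cup \{i\}}),
\]
using $\sss . v_{\gamma \cup \{i\}} = s_i \cdot (\sss . v_\gamma\text{-part}) \cdot v_{\gamma \cup \{i\}}$ in the sense of Definition \ref{def:K-action}; this exactly matches applying $\Phi^\alpha_\sss$ and then the $\ttt$-target. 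Compatibility with each groupoid composition $\ast_{\beta,\alpha;\sfn}$ reduces to the statement that the scaling $v \mapsto \sss . v$ is additive on each $V_\gamma$, which is immediate.

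The main (very mild) obstacle is purely bookkeeping: one must keep straight that each vertex $\alpha$ carries its own copy $V_\gamma$ of $V$ for $\gamma \in \cP(\alpha)$ and that the \emph{same} action of $\sss$ (as defined on $V_\gamma$) is used whether we view $v_\gamma$ as a coordinate at $\alpha$ or at any $\alpha' \supseteq \alpha$; the definition is designed so that this consistency is automatic. Having verified that $\Phi_\sss$ intertwines all edge projections, zero sections, and composition laws of $\sfGsy^\sfn$, one concludes that $\Phi_\sss = (\Phi_\sss^\alpha)_{\alpha \in \cP(\sfn)}$ is a morphism of $n$-fold groupoids, as claimed.
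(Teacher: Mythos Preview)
Your proof is correct and follows the same strategy as the paper's own proof, which simply reads: ``For $n=1$, this is Theorem 3.1 of Part I, and for general $n$, it follows by induction.'' You have unpacked that induction by using the explicit description of $\sfGsy^\sfn_\ttt U$ from Theorem \ref{th:Sym2} and verifying compatibility with each piece of edge data directly; this is exactly what the inductive step amounts to once one writes it out, and your key identity $\ttt . (\sss . v_\gamma) = (\sss\cdot\ttt).v_\gamma$ together with the extra factor of $s_i$ in $\sss.v_{\gamma\cup\{i\}}$ is precisely what makes the target-projection check go through.
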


\begin{proof}
For $n=1$, this is Theorem 3.1 of Part I, and for general $n$, it follows by induction.
\end{proof}

\begin{remark}
Like in Chapter 3 of Part I, using the action of $\K^n$ from the preceding theorem, one can build an $(n+1)$-fold cat out of 
the $n$-fold groupoid $\sfGsy^\sfn U$. This $(n+1)$-fold cat is a subcat of the $2n$-fold cat $\sfG^{\ol \sfn} U$
from the preceding chapter; however, it seems to be less interesting than the whole $2n$-fold cat.
For $U=0$, the $(n+1)$-fold cat reduces to the left action category of $\K^n$ on itself.
\end{remark}

\begin{theorem} \label{th:tangentbundle}
The {\em $n$-fold tangent bundle of $U$},
$$
\sfT^\sfn U := \sfGsy^\sfn_{(0,\ldots,0)} U,
$$ 
is a {\em symmetric $n$-fold vector bundle} (\cite{GM12}), i.e., an edge-symmetric $n$-fold groupoid such that, for each edge, source and target projections coincide, and each vertex group carries moreover a $\K$-module structure.  
\end{theorem}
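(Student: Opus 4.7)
The plan is to specialize the structural data of Theorem \ref{th:Sym2} to $\ttt = (0,\ldots,0)$ and to read off each required property directly from the collapsed formulas. Since $\sfGsy^\sfn_\ttt U$ is already an $n$-fold groupoid for every $\ttt$ (Theorem \ref{th:Sym1}), the only work is to verify edge-symmetry, coincidence of source and target on every edge, and the $\K$-module structure on each vertex group.

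First I would specialize the vertex sets from Theorem \ref{th:Sym2}(1). For $\ttt = 0$, the action $\ttt . v_\gamma$ vanishes whenever $\gamma \neq \emptyset$, so the membership constraint $\sum_{\gamma \in \cP(\beta)} \ttt . v_\gamma \in U$ collapses to $v_\emptyset \in U$, giving
\[
\sfT^{\alpha;\sfn} U \;=\; U \;\times\; \prod_{\emptyset \neq \beta \subseteq \alpha} V_\beta ,
\]
so each vertex set is a trivial bundle of $\K$-modules over the base $U$. Next, for source/target coincidence: by Theorem \ref{th:Sym2}(2), the two projections along an edge $(\beta,\alpha)$ with $\alpha = \beta \cup \{i\}$ differ by the term $t_i v_{\gamma \cup \{i\}}$, which vanishes identically when $t_i = 0$, hence $\pi_0 = \pi_1$ on every edge. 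Edge-symmetry is then immediate from Theorem \ref{th:Sym2}(5), whose hypothesis $t_1 = \cdots = t_n$ is trivially satisfied; the symmetric group $S_n$ acts on $\sfT^\sfn U$ through the coordinate permutations $(v_\alpha) \mapsto (v_{\sigma(\alpha)})$.

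For the $\K$-module structure on vertex groups, I would identify the fiber of $\sfT^{\alpha;\sfn} U$ over any $x \in U$ with the free $\K$-module $\prod_{\emptyset \neq \beta \subseteq \alpha} V$, equipped with componentwise addition and scalar multiplication. On this fiber, the groupoid law of Theorem \ref{th:Sym2}(2) for edge $(\beta,\alpha)$ becomes precisely componentwise addition in the coordinates whose index contains $i$, with the remaining coordinates preserved; this is exactly addition in a $\K$-submodule of the fiber. Compatibility with scalar multiplication is supplied by Theorem \ref{th:Symscalar}: for $\ttt = 0$ one has $\sss \cdot \ttt = 0$ for every $\sss \in \K^n$, so each $\Phi_\sss$ is an $n$-fold groupoid endomorphism of $\sfT^\sfn U$, and taking the diagonal $\sss = (s,\ldots,s)$ induces a coherent scalar action $v_\beta \mapsto s^{\vert \beta \vert} v_\beta$ on the fibers which is linear in $s$ on each $V_\beta$.

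The main obstacle is bookkeeping rather than substance: one must match the ``vertex group'' and $\K$-module-bundle terminology of \cite{GM12} with the components appearing in our explicit description, and check that the scalar action coming from Theorem \ref{th:Symscalar} is coherent with the edge-compositions across all vertices simultaneously. However, because every structure map at $\ttt = 0$ is either a projection, an insertion of a zero coordinate, or componentwise vector addition, every required coherence identity reduces to trivial componentwise arithmetic in the $V_\beta$'s, so no argument beyond Theorems \ref{th:Sym2} and \ref{th:Symscalar} is needed.
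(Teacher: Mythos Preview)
Your approach is exactly the paper's: specialize Theorem \ref{th:Sym2} at $\ttt=0$ to get edge-symmetry and $\pi_0=\pi_1$, and invoke Theorem \ref{th:Symscalar} for the scalar structure. Your explicit unpacking of the vertex sets and edge maps is more detailed than the paper's two-sentence proof but follows the same route.

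One small correction: the diagonal action $v_\beta \mapsto s^{\vert\beta\vert} v_\beta$ you extract from $\Phi_{(s,\ldots,s)}$ is \emph{not} linear in $s$ when $\vert\beta\vert>1$, so it does not furnish the $\K$-module structure. What you actually need (and already have, from your description of the fibers as $\prod_{\emptyset\neq\beta\subset\alpha} V$ with componentwise operations) is that for each edge $(\beta,\beta\cup\{i\})$ the vertex group is a $\K$-module under the $i$-th scalar action $\Phi_{(1,\ldots,s_i,\ldots,1)}$, which acts as ordinary multiplication by $s_i$ on the components $V_{\gamma\cup\{i\}}$; this is the sense in which an $n$-fold vector bundle carries $n$ compatible linear structures, one per direction. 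Drop the diagonal remark and your argument is clean.
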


\begin{proof}
Theorem \ref{th:Sym2} implies that, for $\ttt = 0$, 
we have an edge-symmetric $n$-fold groupoid whose  source and target projections coincide, and 
the preceding Theorem \ref{th:Symscalar} implies that each vertex set
carries a $\K$-module structure which is invariant.
\end{proof}

\subsection{$C^{n,\sym}$-laws}

\begin{definition}
For $n \in \N$, or $n=\infty$,
a {\em $C^{n,\sym}$-law} between linear sets $U,U'$ is family of  morphism of $k$-fold groupoids, for
$k=0,\ldots,n$,  
$$
\bff^\sfk :
\sfGsy^\sfk U \to \sfGsy^\sfk U'
$$
which are compatible with each other: with notation from Equation (\ref{eqn:sym-n}), we have
$$
\forall k=1,\ldots,n: \qquad 
\pi_k \circ \bff^\sfk = (\bff^{\mathsf{k-1}} \times \id_\K) \circ \pi_k .
$$
Such a law is called {\em symmetric} if each $\bff^\sfk$ commutes with the $S_k$-action, and it is called 
 {\em homogeneous} if it commutes wih scalar actions:
$$
\forall \sss \in \K^n:\qquad
\bff^\sfn \circ \Phi_\sss^\alpha  =  \Phi_\sss^\alpha \circ \bff^\sfn .
$$
\end{definition}

Recall  from Definition \ref{th:symmetricformula} the higher order difference factorizer $f_\ttt^{[n]}$, and from Theorem
\ref{th:symmetricformula} its ``explicit formula''. Putting things together, we get explicit and ``closed'' formulae for all components of
$C^{n,\sym}$-laws. However, such formulae are not ``intrinsic'': in geometric terms, they depend on the flat affine connection of the
chart domain $U$.

\begin{theorem}\label{th:symdiff}
Let $\bff^\sfn$ be a $C^{n,\sym}$-law with base map $f:U \to U'$ and assume $\ttt \in (\K^\times)^n$. 
Then, for a general vertex $\alpha \in \cP(\sfn)$, 
the vertex map can be expressed in terms of the higher order difference factorizers of $f$ by
$$
\sfGsy^{\alpha;n}_{\ttt}  f (\vvv_\alpha) = \bigl( f_{\ttt_\beta}^{[\ell_\beta]} (\vvv_\beta ) \bigr)_{\beta \subset \alpha} ,
$$
where $\vvv_\lambda = (v_\mu)_{\mu \subset \lambda}$.
If, moreover,  $\K$ is a topological base ring and $f:U \to W$ is $C^{n,\sym}$ (Definition \ref{def:sym}), then 
the same formulae define a
 a continuous, symmetric and homogenous $C^{n,\sym}$-law $\bff^\sfn$, which is entirely determined by its base map $f$.
\end{theorem}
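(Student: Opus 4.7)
The plan is to proceed by induction on $n$, exploiting the inductive definition $\sfGsy^\sfn_\ttt U = \sfG^{\sett{n}}_{t_n}(\sfGsy^{\mathsf{n-1}}_{t_1,\ldots,t_{n-1}} U)$ from Theorem \ref{th:Sym1}. The driving mechanism at each step is the first-order fact from Part I: a morphism of groupoids from $\sfG^{\sfone}_t U$ to $\sfG^{\sfone}_t U'$ with fixed base map $f$ and invertible $t$ is forced to be given on the top vertex by the difference factorizer $f^{[1]}_t$. Iterating this through $n$ levels will produce the closed expression in the statement.

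For the base case $n=1$ with $t \in \K^\times$, I would argue that any morphism $\bff^\sfone$ over $f$ must send $(v_0,v_1)$ on the top vertex to a pair $(f(v_0), g(v_0,v_1))$ by compatibility with the source projection, and that compatibility with the target projection then enforces $f(v_0) + t\, g(v_0,v_1) = f(v_0 + t v_1)$, whence $g(v_0,v_1) = t^{-1}(f(v_0+tv_1) - f(v_0)) = f^{[1]}_t(v_0,v_1)$. Conversely, if $f^{[1]}_t$ is well-defined on $U_t$, this prescription yields a genuine morphism of groupoids because additivity in the $v_1$-slot reduces to the identity $f^{[1]}_t(v_0, v_1+v_1') = f^{[1]}_t(v_0,v_1) + f^{[1]}_t(v_0 + t v_1, v_1')$, immediate from the definition.

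For the inductive step, I assume the formula at rank $n-1$ and apply the functor $\sfG^{\sett{n}}_{t_n}$. On the sub-$(n-1)$-fold groupoid obtained by freezing the last coordinate, $\bff^\sfn$ restricts to the inductive formula, which already covers every vertex $\alpha$ with $n \notin \alpha$. For $\alpha \ni n$, writing $\alpha' := \alpha \setminus \sett{n}$, the vertex set $\sfGsy^{\alpha;\sfn}_\ttt U$ is the top-vertex $\sett{n}$-extension of $\sfGsy^{\alpha';\mathsf{n-1}}_{t_1,\ldots,t_{n-1}} U$. Applying the base-case observation to $\sfG^{\sett{n}}_{t_n}$ of the inductive vertex map yields, for each $\beta \subset \alpha'$, one ``old'' component $f^{[\ell_\beta]}_{\ttt_\beta}(\vvv_\beta)$ inherited from the source, and one ``new'' component arising as a $t_n$-difference of two applications of the inductive formula, which I will identify with $f^{[\ell_\beta+1]}_{\ttt_{\beta\cup\sett{n}}}(\vvv_{\beta\cup\sett{n}})$ using the recursive description of the higher difference factorizers underlying Theorem \ref{th:symmetricformula}. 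The two families together enumerate all $\beta \subset \alpha$, yielding the claimed formula.

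The main obstacle I anticipate is checking that the explicit formula simultaneously satisfies all $2^{\vert \alpha \vert}$ source/target projection compatibilities and every edge groupoid law, but this will be handled uniformly by the functoriality of $\sfG^{\sett{n}}_{t_n}$ applied to data that already satisfies the rank $(n-1)$ axioms. Symmetry under the $S_n$-action from Theorem \ref{th:Sym2}(5) and homogeneity under the scalings of Theorem \ref{th:Symscalar} are then read off directly from the closed formula, which is visibly $S_n$-invariant (after reindexing) and degree-respecting in the scale variables. Finally, for the topological statement, continuity of each $f^{[k]}_{\ttt_\beta}$ on its domain of definition is precisely the content of the $C^{n,\sym}$ hypothesis (Definition \ref{def:sym}) and lifts vertex-wise to continuity of $\bff^\sfn$; uniqueness from $f$ is then read off the closed formula at invertible $\ttt$, and extends across the full scaleoid by density of $\K^\times$ in $\K$ combined with continuity.
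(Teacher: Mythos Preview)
Your proposal is correct and follows essentially the same route as the paper: induction on $n$, with the base case forced by source/target compatibility, the inductive step handled by applying $\sfG^{\sett{n}}_{t_n}$ to the rank-$(n-1)$ data and distinguishing old versus new vertices, and the topological part concluded by density. Your write-up is more explicit than the paper's (which compresses the induction to ``the computation is the same''), but the strategy and the key observations---uniqueness on the finite part, symmetry read off the closed formula, homogeneity via the scalar action, and the density extension---coincide.
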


\begin{proof}
For  $n=1$, a $C^1$-law is explicitly described by its top-vertex map
$$
\sfG^{\sfone,\sfone}_{t_1} f
((v_0,v_1)) =  (f(v_0), f^{[1]}_{t_1}  (v_0,v_1,t_1)),
$$
Iterating this for $n=2$, we get 
\begin{equation*}
\sfGsy^{\sftwo,\sftwo}_{(t_1,t_2)} f
((v_0,v_1,v_2,v_{12})) =
\bigr( f(v_0), 
f^{[1]}_{t_1}(v_0,v_1),
f^{[1]}_{t_2}(v_0,v_2),
f^{[2]}_{t_1,t_2}(v_0,v_1,v_2,v_{12}) \bigl),
\end{equation*}
where $f^{[2]}$ is explicitly given by Equation (\ref{eqn:03}). 
For general $\sfn$ and $\alpha \in \cP(\sfn)$, the computation is the same, by induction.
Note that, on the finite part, this law is uniquely determined by the base map $f$, and moreover, it is symmetric,
since the explicit formulae obviously have a symmetric structure. 

\ssk
Now, in a topological setting,
if $f$ is $C^{n,\sym}$, then, ``by density'', all properties carry  over to arbitrary $\ttt$. Indeed, by definition of 
the $C^{n,\sym}$-property, the law can be defined by induction, and the explicit formulae show that it is continuous.
Symmetry now carries over by density, and so does homogeneity. 
\end{proof}

\subsection{Scalar extensions, and symmetry of polynomial laws}
Recall from Part I, Lemma 4.13, that $\sfG^{\sfone}_t$, applied to the ring product $\K \times \K\to \K$,
gives the ring $\K[X]/(X^2 - tX)$, together with its two projections to $\K$. Likewise, $\sfG^\sfone_t$ 
applying it to the (right) action map
$V \times \K \to V$ gives the $\K[X]/(X^2 - tX)$-module structure of the scalar extended module.

\begin{theorem}
Fix  $\ttt \in \K^n$. Applying $\sfGsy_\ttt^\sfn$ to the ring product $\K \times \K \to \K$ yields a {\em ring of type $\ul{\rm Goid}_n$},
that is, an $n$-fold groupoid $\sfG_\ttt^\sfn \K$, such that each vertex set
$\sfGsy^{\alpha;\sfn}_\ttt \K$ carries a ring structure, and both structures are compatible with each other:
the structure maps of the $n$-fold groupoid are morphisms of rings, and the collection of ring structures defines a morphism of
$n$-fold groupoids $\sfGsy^\sfn_\ttt (\K \times \K) \to \sfGsy_\ttt^\sfn \K$. 
\end{theorem}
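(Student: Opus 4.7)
The proof plan is induction on $n$, paralleling the inductive construction of $\sfGsy^\sfn_\ttt$. The base case $n=1$ is Part I, Lemma 4.13: the ring structure on $\K$ lifts to $\sfG^\sfone_{t_1} \K = \K[X]/(X^2 - t_1 X)$, with the groupoid projections, zero section and composition all being ring homomorphisms, so that $\sfG^\sfone_{t_1}\K \rightrightarrows \K$ is literally a groupoid object in the category of rings. In the inductive step, suppose $R := \sfGsy^{\mathsf{n-1}}_{(t_1,\ldots,t_{n-1})} \K$ is already a ring of type $\ul{\rm Goid}_{n-1}$, and apply the functor $\sfG^{\{n\}}_{t_n}$ to \emph{all} of its structure (vertex rings, projections, sections, compositions, ring operations).

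The crucial property of symmetric calculus, highlighted at the start of Section \ref{sec:sym}, is that each $\sfG^{\{n\}}_{t_n}$ \emph{preserves cartesian products}. Consequently, applying the functor to the ring multiplication $m : R \times R \to R$ and addition $a : R \times R \to R$ (which, vertex-by-vertex, are bilinear resp.\ linear polynomial laws, hence lift to morphisms of $(n-1)$-fold groupoids by Theorem \ref{th:polylaws}) yields well-defined maps
\[
\sfG^{\{n\}}_{t_n} m, \, \sfG^{\{n\}}_{t_n} a : \sfG^{\{n\}}_{t_n} R \times \sfG^{\{n\}}_{t_n} R = \sfG^{\{n\}}_{t_n}(R \times R) \to \sfG^{\{n\}}_{t_n} R.
\]
The ring axioms at level $n-1$ are commutative diagrams whose vertices are iterated cartesian products of copies of $R$; functoriality together with product-preservation transports these diagrams intact to level $n$, endowing each vertex of $\sfGsy^\sfn_\ttt \K$ with a ring structure.

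The two compatibility assertions in the statement are two faces of the same fact, namely that we have a ring object in the category of $n$-fold groupoids or, equivalently, an $n$-fold groupoid object in the category of rings. On the one hand, the new structure maps introduced at step $n$ (the $n$-th pair of target/source projections, the zero section, and the composition $\ast_n$) are simply $\sfG^{\{n\}}_{t_n}$ applied to the analogous data at level $n-1$, and since those are ring morphisms by the inductive hypothesis, their images under the product-preserving functor $\sfG^{\{n\}}_{t_n}$ are again ring morphisms. On the other hand, the fact that the ring operations on $\sfGsy^\sfn_\ttt \K$ define a morphism of $n$-fold groupoids $\sfGsy^\sfn_\ttt(\K\times\K)\to\sfGsy^\sfn_\ttt\K$ is exactly Theorem \ref{th:polylaws} applied directly to the polynomial laws $m$ and $a$.

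The main obstacle I anticipate is purely bookkeeping: one has to verify that the various partially defined products (the groupoid compositions $\ast_k$) remain well-defined after the functor is applied. This is harmless here because ring addition and multiplication are everywhere defined on $\K$, so the pullbacks involved in defining $\ast_k$ behave well under products; no analogue of the more delicate pullback analysis needed for full cubic calculus (Theorem \ref{la:pullback}) is required. Once the base case and Theorem \ref{th:polylaws} are in hand, the inductive step is therefore formal diagram-chasing, and the theorem follows.
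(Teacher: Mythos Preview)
Your proposal is correct and follows exactly the same route as the paper: the paper's own proof is the terse two-liner ``For $n=1$, see the result from Part I quoted above; for $n\in \N$ the claim now follows by induction,'' and what you have written is precisely the unpacking of that induction, correctly identifying product-preservation of $\sfG^{\{n\}}_{t_n}$ (Theorem \ref{th:Sym2}(6)) as the mechanism that transports the ring axioms and compatibility conditions from level $n-1$ to level $n$.
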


\begin{proof}
For $n=1$, see the result from Part I quoted above; for $n\in \N$ the claim now follows by induction.
\end{proof}

\begin{theorem}
An algebraic model for the structures from the preceding theorem can be constructed as follows:
for $\alpha \in \cP(\sfn)$, consider the ring
\begin{align*}
\bA_\ttt^{\alpha;\sfn}&:=
\K[X]/(X^2 - t_{\alpha_1} X) \otimes \ldots \otimes \K[X]/(X^2 - t_{\alpha_\ell} X) 
\cr
& = 
 \K[X_{\alpha_1},\ldots,X_{\alpha_n}] / (X_{\alpha_1}^2 - t_{\alpha_1} X_{\alpha_1} , \ldots, X_{\alpha_\ell}^2 - t_{\alpha_\ell} X_{\alpha_\ell}) .
\end{align*} 
For every edge $(\beta,\alpha;\sfn)$, there are edge projections (source and target) and injections, such that
the whole structure forms a ring of type $\ul{\rm Goid}_n$, isomorphic to $\sfGsy_\ttt^\sfn \K$.
\end{theorem}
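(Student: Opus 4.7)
I will proceed by induction on $n$, mirroring the inductive definition of $\sfGsy^\sfn_\ttt$ via repeated application of the symmetric cat rule $\sfG^{\{k\}}_{t_k}$. The base case $n=1$ is exactly the content of Part I, Lemma 4.13, which identifies $\sfG^\sfone_t\K$ (as a ring of type $\ul{\rm Goid}_1$) with $\K[X]/(X^2-tX)$, equipped with its two canonical $\K$-algebra morphisms to $\K$, namely $X\mapsto 0$ (source) and $X\mapsto t$ (target), together with the zero section $\K\hookrightarrow \K[X]/(X^2-tX)$.

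The crucial auxiliary observation, which I would record as a lemma before the induction, is that for any commutative $\K$-algebra $R$, applying the functor $\sfG^\sfone_t$ to the multiplication $R\times R\to R$ produces the ring structure on $R^2$ with product $(r_0,r_1)(s_0,s_1)=(r_0 s_0,\, r_0 s_1+r_1 s_0+t\, r_1 s_1)$, and this ring is canonically isomorphic to $R\otimes_\K\K[X]/(X^2-tX)$ via $(r_0,r_1)\mapsto r_0\otimes 1+r_1\otimes X$. This is Part I, Lemma 4.13 applied with $R$ in the role of $\K$, and the source, target, and injection match the $\K$-linear extensions of those at the base case.

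For the inductive step, assume the theorem at rank $\mathsf{n-1}$, so that for each $\alpha'\in\cP(\mathsf{n-1})$ there is a ring isomorphism $\sfGsy^{\alpha';\mathsf{n-1}}_{(t_1,\ldots,t_{n-1})}\K\cong\bA^{\alpha';\mathsf{n-1}}_{(t_1,\ldots,t_{n-1})}$ compatible with all edge projections, injections, and groupoid compositions. By Theorem \ref{th:Sym1}, $\sfGsy^\sfn_\ttt\K$ is obtained by applying $\sfG^{\{n\}}_{t_n}$ to the whole $(n-1)$-fold groupoid $\sfGsy^{\mathsf{n-1}}_{(t_1,\ldots,t_{n-1})}\K$. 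For a vertex $\alpha\in\cP(\sfn)$ with $n\notin\alpha$, nothing happens and the vertex ring is $\bA^{\alpha;\mathsf{n-1}}=\bA^{\alpha;\sfn}$. For $\alpha=\alpha'\cup\{n\}$ with $n\in\alpha$, the auxiliary observation above gives
$$
\sfGsy^{\alpha;\sfn}_\ttt\K \;=\; \sfG^\sfone_{t_n}\bigl(\sfGsy^{\alpha';\mathsf{n-1}}_{(t_1,\ldots,t_{n-1})}\K\bigr) \;\cong\; \bA^{\alpha';\mathsf{n-1}}\otimes_\K\K[X]/(X^2-t_n X) \;=\; \bA^{\alpha;\sfn},
$$
and the edge in the $n$-direction inherits the source/target/injection of the base case lemma.

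The main step that requires care is verifying compatibility with edges in directions $i<n$. Here I would invoke functoriality: each structure map of $\sfGsy^{\mathsf{n-1}}_{(t_1,\ldots,t_{n-1})}\K$ along such an edge is a ring morphism by the inductive hypothesis, so its image under $\sfG^{\{n\}}_{t_n}$ is again a ring morphism, and under the identification of the preceding paragraph it corresponds to tensoring the original morphism with $\id_{\K[X]/(X^2-t_n X)}$. This is precisely the natural arrow between the tensor-product algebras $\bA^{\beta;\sfn}$ and $\bA^{\alpha;\sfn}$ for $(\beta,\alpha)$ an $i$-edge with $i<n$, and the addition-in-the-$X_i$-coefficient form of the groupoid composition matches the one described in Theorem \ref{th:Sym2}(2). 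The genuine bookkeeping obstacle is not any single edge but the simultaneous coherence across all $2^n$ vertices and their edges; this is resolved by noting that the tensor product of $\K$-algebras is symmetric monoidal, so the order in which the factors $\K[X]/(X^2-t_i X)$ are introduced is immaterial, and the resulting family $\{\bA^{\alpha;\sfn}\}_{\alpha\in\cP(\sfn)}$ with the maps just described is automatically a ring of type $\ul{\rm Goid}_n$ isomorphic to $\sfGsy^\sfn_\ttt\K$.
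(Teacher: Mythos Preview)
Your proposal is correct and takes essentially the same approach as the paper: induction on $n$, with the base case supplied by Part I (the paper cites sections 4.4 and 4.5, you cite Lemma 4.13, which is the same content), and the inductive step obtained by applying $\sfG^{\{n\}}_{t_n}$ to the $(n-1)$-fold structure. The paper's proof is a single sentence (``for $n=1$ this is shown in Part I, 4.4 and 4.5, and for $n\in\N$ it follows by induction''), whereas you have spelled out the key mechanism---that $\sfG^\sfone_t R \cong R\otimes_\K \K[X]/(X^2-tX)$ for any commutative $\K$-algebra $R$---and the functoriality argument for the off-direction edges, which is exactly what the induction silently relies on.
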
 

\begin{proof}
Again, for $n=1$ this is shown in Part I, 4.4 and 4.5, and for $n\in \N$ it follows by induction.
\end{proof}

\begin{theorem}
Assume $P:\ul V \to \ul W$ is a polynomial law. 
Then $P$ defines a homogeneous symmetric $C^{\infty,\sym}$-law between $V$ and $W$. 
\end{theorem}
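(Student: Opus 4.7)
The plan is to proceed by induction on $n$, exploiting the recursive definition $\sfGsy^\sfn_\ttt = \sfG^{\sett{n}}_{t_n} \circ \sfGsy^{\mathsf{n-1}}_{t_1,\ldots,t_{n-1}}$ together with the base case $n=1$, which is exactly Theorem \ref{th:polyone}: a polynomial law already induces a homogeneous (hence, trivially for $n=1$, symmetric) $C^1$-law. For the induction step, given $\bff^{\mathsf{n-1}}$, one applies the cat rule $\sfG^{\sett{n}}_{t_n}$ to all structure maps; since the components of $\bff^{\mathsf{n-1}}$ are polynomial in the affine coordinates, the same argument that carried the proof of Theorem \ref{th:polylaws} through in the full cubic case now carries it through here, and compatibility of the $\bff^\sfk$'s across $k$ is built into the iteration.

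A cleaner conceptual route, which I would favour for writing up the actual proof, uses the scalar extension picture provided by the two preceding theorems. The algebraic model identifies the ring attached to vertex $\alpha$ with
\[
\bA_\ttt^{\alpha;\sfn} = \K[X_{\alpha_1},\ldots,X_{\alpha_\ell}]/(X_{\alpha_i}^2 - t_{\alpha_i} X_{\alpha_i}),
\]
whose $\K$-basis is naturally indexed by $\cP(\alpha)$ via $X_\beta := \prod_{i \in \beta} X_i$. Unpacking Theorem \ref{th:Sym2}(1), one sees that $\sfGsy^{\alpha;\sfn}_\ttt V$ is canonically in bijection with the scalar-extended module $V \otimes_\K \bA_\ttt^{\alpha;\sfn}$, the tuple $(v_\beta)_{\beta \in \cP(\alpha)}$ corresponding to $\sum_\beta v_\beta \otimes X_\beta$. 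Under this dictionary, the edge projections, zero sections, and composition laws of Theorem \ref{th:Sym2}(2) are exactly the set-maps induced on scalar extensions by ring homomorphisms between the $\bA_\ttt^{\alpha;\sfn}$'s.

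Once this dictionary is established, one evaluates $P$ on each commutative $\K$-algebra $\bA_\ttt^{\alpha;\sfn}$ to obtain
\[
P_{\bA_\ttt^{\alpha;\sfn}}: V \otimes_\K \bA_\ttt^{\alpha;\sfn} \to W \otimes_\K \bA_\ttt^{\alpha;\sfn}.
\]
Naturality of $P$ in the ring variable is the single property that does all the work: it forces this family to intertwine every structure morphism of the $n$-fold groupoid, giving the desired $\bff^\sfn$. Compatibility across $n$ (yielding a genuine $C^{\infty,\sym}$-law) comes from naturality applied to the ring projection that kills the new indeterminate $X_n$. Homogeneity follows from naturality applied to the ring morphism $\bA_{\sss\cdot\ttt}^{\alpha;\sfn} \to \bA_\ttt^{\alpha;\sfn}$, $X_i \mapsto s_i X_i$, which by inspection induces the scalar action $\Phi_\sss$ of Theorem \ref{th:Symscalar}. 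Symmetry is obtained analogously from the relabeling-of-indeterminates isomorphisms $\bA^{\alpha;\sfn}_\ttt \to \bA^{\sigma(\alpha);\sfn}_{\sigma(\ttt)}$ that realize the $S_n$-action of Theorem \ref{th:Sym2}(5).

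The main obstacle I anticipate is the bookkeeping for the identification of the combinatorial vertex sets with tensor products, and the verification that the slightly asymmetric-looking composition formula $(v'_\gamma) \ast (v_\gamma) = (v_\gamma, v_{\gamma \cup \{i\}} + v'_{\gamma \cup \{i\}})$ of Theorem \ref{th:Sym2}(2) is genuinely the set-map on $V \otimes \bA_\ttt^{\alpha;\sfn}$ induced by the corresponding diagonal-type ring morphism between scalar extensions. Once this correspondence is checked, every clause in the conclusion reduces to functoriality of $P$ applied to a specific ring homomorphism, and no further computation is required.
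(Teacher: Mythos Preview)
Your proposal is correct and follows essentially the same approach as the paper. The paper's proof is terser: it obtains the homogeneous $C^{\infty,\sym}$-law by induction from the $n=1$ case (Theorem \ref{th:polyone}, i.e.\ Theorem 4.13 of Part I), then notes explicitly that the density argument for symmetry is unavailable in this algebraic setting, and establishes symmetry exactly as you do---by observing that the laws arise via scalar extension by the rings $\bA_\ttt^\sfn$ and that $S_n$ acts on these rings by automorphisms permuting the $X_{\alpha_i}$ and the $t_{\alpha_i}$ simultaneously, so naturality of the polynomial law forces commutation with the $S_n$-action. Your write-up is more detailed (in particular you derive homogeneity from the ring morphism $X_i \mapsto s_i X_i$ rather than by induction, which is a minor organizational variant), but the mathematical content is the same.
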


\begin{proof}
By induction it follows from Theorem 4.13 of Part I  that $P$ 
defines a homogeneous $C^{\infty,\sym}$-law between $V$ and $W$. 
It remains to prove that this law is {\em symmetric} (note that the ``density argument'', proving the corresponding fact in
topological differential calculus, is not available here!). 
Now, by definition these laws are constructed by scalar extension with the rings
$\bA_\ttt^\sfn = \sfGsy_\ttt^\sfn \K$, and the explicit formula shows that the group $S_n$ acts by automorphisms on
this ring of type $\ul{\rm Goid}_n$ (it permutes the $X_{\alpha_i}$ and at the same time the $t_{\alpha_i}$), and hence the laws
defined by scalar extension also commute with this action. 
\end{proof}

%%%%%%%%%%%%%%%%%%%%%%%%%%%%%%%%%%%%%%%%%%%%%%%%%%%%% SECTION 4 :
\section{Full cubic calculus}\label{sec:full}
%%%%%%%%%%%%%%%%%%%%%%%%%%%%%%%%%%%%%%%%%%%%

The structure of the full cubic higher order groupoids is more complicated than the structure of the symmetric ones --
the main reason is that in symmetric calculus {\em scaleoids} $\sfGsy^\sfn 0$ have a trivial $n$-fold groupoid structure, whereas
in full calculus, complications are concentrated in the rather intricate structure of the scaleoids
$\sfG^\sfn 0$. 
In this chapter, we initiate the study of these higher order structures; but by no means we pretend that this study were complete.
%In particular, for a better understanding it will be necessary to have a conceptual theory of {\em simplicial differential calculus}. 
Let us
start by describing in more detail the  algorithm allowing to compute  vertex sets and vertex maps.  To fix notation for this chapter,
let  $N = \{ a_1,\ldots, a_n\} \subset \N$ be a finite subset of cardinality $n$, and we assume that $a_1 < \ldots < a_n$; 
for a vertex  $\alpha \in \cP(N)$ of cardinality $k=\ell(\alpha)$, we write  $\alpha = \{ \alpha_1,\ldots,\alpha_k\}$, and assume
$\alpha_1 < \ldots < \alpha_k$.

\subsection{Top and bottom vertex sets}
First of all, recall from Theorem \ref{th:topvertex}  the description of the top and bottom vertex sets
\begin{align*}
\sfG^{N;N} U & = (\ldots (U^\sett{a_1})^\sett{a_2} \ldots )^\sett{a_n} = U^\sett{a_1,\ldots, a_n} = U^N,
\cr
\sfG^{\emptyset;N} U & = U \times \K^n = U \times 0^\sett{ a_1} \times \ldots \times 0^\sett{ a_n }.
\end{align*} 
For $N=\sfone$, $U^\sett{1} = \{ (v_0,v_1,t_1)\mid \, v_0 \in U, t_1 \in \K, v_0+t_1 v_1 \in U\}$, and
for $N = \sftwo$, see Equation (\ref{eqn:U-two}). 
By induction,  $U^N$ is a subset of $V^{2^n} \times \K^{2^n-1}$.
Its elements will be described by their ``affine coordinates'' 
\begin{equation}
(\vvv, \ttt) = 
\bigl( (v_\alpha)_{\alpha \in \cP(N)},(t_\alpha)_{\alpha \in \cP(N)\setminus \emptyset} \bigr),
\end{equation}
where the elements $v_\alpha \in V, t_\alpha \in \K$ satisfy $2^n$ inductively defined conditions
(the first of which is $v_\emptyset \in U$).

\subsection{Description of vertex sets and vertex maps for $\sfG^N$}
To define general vertex sets $\sfG^{\alpha;N} U$, 
the inductive procedure from Theorem \ref{th:Full1} (and
of Theorem \ref{th:nfoldcat})  leads us to distinguish, at each step of the induction procedure,  
 the cases of ``new'' and ``old'' vertices.
The following definition is designed  to deal with this distinction:

\begin{definition}
With $N$ and $\alpha \in \cP(N)$ as above,  for all $j \in N$, we define 
$$
\langle \alpha , j \rangle  := \Bigr\{
\begin{matrix}
\{ j \} & \mbox{ if } & j \in \alpha \cr 
\emptyset & \mbox{ if } & j\notin \alpha.
 \end{matrix}
 $$
\end{definition}

\begin{theorem}[Computing $\sfG^{\alpha;N}$]\label{th:fullcompute}
With notation as in the preceding definition, 
$$
\sfG^{\alpha;N} =
\sfG^{\langle \alpha, a_n \rangle, \{ a_n\} } \circ 
\sfG^{\langle \alpha, a_{n-1} \rangle, \{ a_{n-1}\} }  \circ \ldots \circ 
\sfG^{\langle \alpha, a_1 \rangle, \{ a_1 \} } .
$$
This applies both to the computation of $\sfG^{\alpha;N} U$ and of $\sfG^{\alpha;N} f$: 
under the assumptions of topological differential calculus, or for polynomial laws, the procedure permits to compute
the vertex maps $\sfG^{\alpha; N} f$  (Definition \ref{def:derived}) from the base map $f$. 
In particular, top vertex and bottom vertex maps are given by
\begin{align*}
\sfG^{N;N} f & = (\ldots (f^\sett{a_1})^\sett{a_2} \ldots )^\sett{a_n} =: f^\sett{a_1,\ldots, a_n},
\cr
\sfG^{\emptyset;N} f & = f \times \id_\K^n . % \times 0^\sett{ \alpha_1} \times \ldots \times 0^\sett{ \alpha_n }.
\end{align*}
\end{theorem}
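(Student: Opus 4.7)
My plan is to proceed by induction on $n = |N|$, unwinding the defining composition of first-order generation functors from Theorem \ref{th:Full1}. For $n=0$ the claim is the zeroth-order convention $\sfG^\emptyset U = U$, $\sfG^\emptyset f = f$. For the inductive step, set $N' = N \setminus \{a_n\}$ and recall from Theorem \ref{th:Full1} that $\sfG^N U = \sfG^{\{a_n\}}(\sfG^{N'} U)$, obtained by applying the groupoid rule of generation $a_n$ to every vertex set of the $(n-1)$-fold groupoid $\sfG^{N'} U$.

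The key single-step observation is the following. The vertex set of $\cP(N)$ splits as $\cP(N') \sqcup \{\alpha' \cup \{a_n\} \mid \alpha' \in \cP(N')\}$, and by the construction of $\sfG^{\{a_n\}}$ acting on an $(n-1)$-fold groupoid (Appendix \ref{App:nfoldCats}), each vertex of $\sfG^{N'}U$ spawns two vertices of $\sfG^N U$: an object-type vertex (indexed by $\alpha' \in \cP(N')$) equal to the old vertex set times $\K$, and a morphism-type vertex (indexed by $\alpha' \cup \{a_n\}$) equal to the first-order extension $(\cdot)^{\{a_n\}}$ with respect to $t_{a_n}$. Using the notation $\sfG^{\langle\alpha,a_n\rangle,\{a_n\}}$ introduced just before the statement, these two cases are uniformly captured as
\[\sfG^{\alpha;N} \;=\; \sfG^{\langle \alpha, a_n\rangle,\{a_n\}} \circ \sfG^{\alpha \setminus \{a_n\}; N'},\]
since $\alpha \setminus \{a_n\} = \alpha \cap N'$ regardless of whether $a_n \in \alpha$. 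Applying the inductive hypothesis to the second factor gives the stated telescoping composition. The extreme cases $\alpha = N$ (all $\langle \alpha, a_j\rangle = \{a_j\}$) and $\alpha = \emptyset$ (all $\langle \alpha, a_j\rangle = \emptyset$) then yield the explicit top- and bottom-vertex descriptions.

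For the vertex-map assertion, the same induction applies with no extra work: the first-order construction $\sfG^{\{a_n\}}$ is functorial on the ambient concrete categories (Definition \ref{def:ccatC1} and Theorem \ref{th:topCn} in the topological case, Theorem \ref{th:polylaws} in the polynomial case), so it transforms a morphism of $(n-1)$-fold groupoids into a morphism of $n$-fold groupoids whose components on the two new vertex types are precisely $f \mapsto f \times \id_\K$ and $f \mapsto f^{\{a_n\}}$. The compatibility condition (\ref{eqn:fn-1}) ensures that the factors assemble consistently across generations, and the inductive step then exhibits the vertex map $\sfG^{\alpha;N} f$ as the same iterated composition.

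The main technical obstacle is the bookkeeping: one must verify at each intermediate level $j$ that the factor $\sfG^{\langle\alpha,a_j\rangle,\{a_j\}}$ is being applied to the correct vertex of the $(j-1)$-fold sub-groupoid, namely the one over $\alpha \cap \{a_1,\ldots,a_{j-1}\}$. This is exactly what the identity $\alpha \setminus \{a_j\} = \alpha \cap \{a_1,\ldots,a_{j-1}\}$ (valid because $a_j$ is the largest element at stage $j$) guarantees, so once the single-step identity above is established, the remainder is a clean unwinding of the inductive definition with no further combinatorial surprises.
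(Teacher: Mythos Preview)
Your proof is correct and follows essentially the same approach as the paper: induction on $n$, peeling off the top element $a_n$ and distinguishing the two cases ``$a_n \in \alpha$'' (derive) versus ``$a_n \notin \alpha$'' (multiply by $\K$), which you package uniformly via the symbol $\sfG^{\langle\alpha,a_n\rangle,\{a_n\}}$. The paper's own proof is somewhat terser and leaves the vertex-map part implicit, whereas you spell out the functoriality and the bookkeeping identity $\alpha\setminus\{a_n\}=\alpha\cap N'$ more explicitly; but the underlying argument is the same unwinding of Theorem~\ref{th:Full1}.
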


\begin{proof}
Written out in more detail, to compute $\sfG^{\alpha;N}U$, the procedure from Theorem \ref{th:Full1} says that,
with $N' = N \setminus \{ a_n \}$ and  $\alpha' = \alpha \setminus \{ a_k \}$,
\begin{enumerate}
\item
if the top element $a_n$ belongs to $\alpha$ (``new vertex'') then we ``derive'':
$$
\sfG^{\alpha;N} U = \sfG^{\{ a_kn\}; \{ a_n \}} (\sfG^{ \alpha' ; N'} U),
$$
%and, when applying the functor $\sfG^{\{ a_k \}; \{ a_k \}}$, it is applied to
% $\sfG^{ \alpha' ; N'} U$ considered, by induction, as an iterated pullback,
\item
if $a_n \notin \alpha$ (``old vertex''), then we ``take a product with $\K$''
$$
\sfG^{\alpha;N} U = \sfG^{\alpha;N'} U \times \K = \sfG^{\alpha;N'} U \times_0 0^\sett{a_k} ,
$$
\end{enumerate}
and so on. This is exactly what the algorithm described above does.
\end{proof}

\begin{remark}
When ``deriving pullbacks'', i.e., 
 when computing the composition of symbols
$\sfG^{\{ i\} , \{ i \}} \circ \sfG^{\emptyset , \{ i-1 \}}$, one has to take account of Relation (\ref{eqn:pb!}).
For instance, the vertex $\{ 2 \}$ of the $2$-cube $\{ 1,2\}$ gives rise to the vertex set
$$
\sfG^{ \{ 2 \}, \{ 1,2 \}} U =
\sfG^{ \{ 2 \}, \{ 2 \} } \circ \sfG^{\emptyset , \{ 1 \}} U =
\sfG^{ \{ 2 \}, \{ 2 \} } ( U \times  0^\sett{1}  ) =
U^\sett{2} \times_{0^\sett{2}} 0^\sett{1,2},
$$
and since ``affine coordinates'' of $0^\sett{1,2}$ are $(t_1,t_2,t_{12}) \in \K^3$, we get those of
$\sfG^{ \{ 2 \}, \{ 1,2 \}} U$, namely $(v_0,v_2,t_1,t_2,t_{12})$ (where $t_2$ appears only once because we take the quotient by $0^\sett{2}$).
In the same way, we get the following tables  for
$N=\sfone $ and  $N=\sftwo$:
\end{remark}

\ssk

\nin \begin{tabular}{ c | l |  l  | l }
$N $ & $\alpha $ & vertex set $\sfG^{\alpha;N}U$ & affine coordinates of elements of $\sfG^{\alpha;N} U$
\\
\hline
$\sfone$ & $\emptyset$ & $U \times 0^\sett{1}$ & $(v_0,t_1)$ 
\\
 & $\{ 1 \}$ & $U^\sett{1}$ & $(v_0,v_1,t_1)$
\\
\hline
$\sftwo$ & $\emptyset$ & $U \times 0^\sett{1}  \times 0^\sett{2}$ & $(v_0,t_1,t_2)$
\\
 & $\{ 1 \}$ & $U^\sett{1}\times 0^\sett{2}$ & $(v_0,v_1,t_1,t_2)$
\\
 & $\{ 2 \}$ & $U^\sett{2}\times_{0^\sett{2} } 0^\sett{1,2} $ & $(v_0,v_2,t_1,t_2,t_{12})$
\\
 & $\{ 1,2 \}$ & $U^\sett{1,2}$ & $(v_0,v_1,v_2,v_{12},t_1,t_2,t_{12})$
\end{tabular}

\msk
\nin
To get the corresponding table of vertex sets
for $N= \mathsf{3}$, for the ``old'' vertices, juste  copy and paste the table for $N=\sftwo$
 and add everywhere $\times 0^\sett{3}$; for the
``new''  vertices, we  ``derive by $\sett{3}$'': everwhere $3$ is added in the superscripts, and
$\times_{0^A}$ is replaced by $\times_{0^{A \cup \{ 3\}}}$. This gives:

\msk
\nin \begin{tabular}{ c | l |  l  | l }
$N $ & $\alpha $ & vertex set $\sfG^{\alpha;N}U$ & affine coordinates of elements of $\sfG^{\alpha;N} U$
\\
\hline
$\sfthree$ & $\emptyset$ & $U \times 0^\sett{1} \times 0^\sett{2} \times 0^\sett{3}$ & $(v_0,t_1,t_2,t_3)$
\\
 & $\{ 1 \}$ & $U^\sett{1}\times  0^\sett{2} \times 0^\sett{3} $ & $(v_0,v_1,t_1,t_2,t_3)$
\\
 & $\{ 2 \}$ & $U^\sett{2}\times_{0^\sett{2}} 0^\sett{1,2} \times 0^\sett{3}$ & $(v_0,v_2,t_1,t_2,t_{12},t_3)$
\\
 & $\{ 1,2 \}$ & $U^\sett{1,2} \times 0^\sett{3}$ & $(v_0,v_1,v_2,v_{12},t_1,t_2,t_{12},t_3)$
\\
\hline
& $\{ 3 \}$ & $U^\sett{3} \times_{0^\sett{3}} 0^\sett{13} \times_{0^\sett{3}} 0^\sett{2,3}$ & 
$(v_0,v_3,t_1,t_3,t_{13},t_2,t_{23})$
\\
& $\{ 1,3 \}$ & $U^\sett{1,3} \times_{0^\sett{3}} 0^\sett{2,3}$ & $(v_0,v_1,v_3,v_{13}, t_1,t_3,t_2,t_{13},t_{23})$
\\
& $\{ 2,3 \}$ & $U^\sett{2,3} \times_{0^\sett{ 2,3 }} 0^\sett{1,2,3}$ &
$(v_0,v_2,v_3,v_{23}, t_1,t_2,t_3,t_{12},t_{13},t_{23},t_{123})$
\\
& $\{ 1, 2,3 \}$ & $U^\sett{1,2,3}$ &
$(v_0,v_1,v_2,v_3,v_{23},v_{13},v_{12},v_{123}, $
\\
& & & $\qquad \qquad 
t_1,t_2,t_3,t_{12},t_{13},t_{23},t_{123})$
%\\
%\hline
%$\sfn$ & $\emptyset$ & $U \times \K_1 \times \ldots \times \K_n$ & $(v_0,t_1,\ldots,t_n)$
%\\
%& $\{ 1 \}$ & $U^\sett{1} \times \K^{n-1}$ & $(v_0,v_1,t_1,\ldots,t_n)$
%\\
%& $\mathsf{n-1}$ & $U^\sett{1,\ldots,n-1} \times \K_n$ & $((v_\alpha)_{\alpha \in \cP(\{1,\ldots,n-1\})} ,(t_\alpha)_{\alpha \in \cP(\{1,\ldots,n-1\})},t_n)$
%\\
%& $\sfn$ &  $U^\sfn$ & $((v_\alpha)_{\alpha \in \cP(\sfn)},(t_\alpha)_{\alpha \in \cP(\sfn)} )$
\end{tabular}

\msk
\nin
The underlying combinatorial scheme is what we call the {\em $\alpha$-stair}, e.g., for $N=\sfthree$:

\begin{center}
\psset{xunit=0.5cm,yunit=0.5cm,algebraic=true,dimen=middle,dotstyle=o,dotsize=3pt 0,linewidth=0.8pt,arrowsize=3pt 2,arrowinset=0.25}
\begin{pspicture*}(-4.3,-4.08)(19.66,4.00)
\pspolygon[linecolor=lightgray,fillcolor=lightgray,fillstyle=solid,opacity=0.1](0.,0.)(0.,1.)(-1.,1.)(-1.,0.)
\pspolygon[linecolor=lightgray,fillcolor=lightgray,fillstyle=solid,opacity=0.1](1.,1.)(1.,2.)(0.,2.)(0.,1.)
\pspolygon[linecolor=lightgray,fillcolor=lightgray,fillstyle=solid,opacity=0.1](2.,2.)(2.,3.)(1.,3.)(1.,2.)
\pspolygon[linecolor=lightgray,fillcolor=lightgray,fillstyle=solid,opacity=0.1](5.,0.)(5.,1.)(4.,1.)(4.,0.)
\pspolygon[linecolor=lightgray,fillcolor=lightgray,fillstyle=solid,opacity=0.1](6.,0.)(6.,1.)(5.,1.)(5.,0.)
\pspolygon[linecolor=lightgray,fillcolor=lightgray,fillstyle=solid,opacity=0.1](6.,1.)(6.,2.)(5.,2.)(5.,1.)
\pspolygon[linecolor=lightgray,fillcolor=lightgray,fillstyle=solid,opacity=0.1](7.,2.)(7.,3.)(6.,3.)(6.,2.)
\pspolygon[linecolor=lightgray,fillcolor=lightgray,fillstyle=solid,opacity=0.1](10.,0.)(10.,1.)(9.,1.)(9.,0.)
\pspolygon[linecolor=lightgray,fillcolor=lightgray,fillstyle=solid,opacity=0.1](11.,1.)(11.,2.)(10.,2.)(10.,1.)
\pspolygon[linecolor=lightgray,fillcolor=lightgray,fillstyle=solid,opacity=0.1](12.,2.)(12.,3.)(11.,3.)(11.,2.)
\pspolygon[linecolor=lightgray,fillcolor=lightgray,fillstyle=solid,opacity=0.1](12.,1.)(12.,2.)(11.,2.)(11.,1.)
\pspolygon[linecolor=lightgray,fillcolor=lightgray,fillstyle=solid,opacity=0.1](12.,0.)(12.,1.)(11.,1.)(11.,0.)
\pspolygon[linecolor=lightgray,fillcolor=lightgray,fillstyle=solid,opacity=0.1](15.,0.)(15.,1.)(14.,1.)(14.,0.)
\pspolygon[linecolor=lightgray,fillcolor=lightgray,fillstyle=solid,opacity=0.1](16.,0.)(16.,1.)(15.,1.)(15.,0.)
\pspolygon[linecolor=lightgray,fillcolor=lightgray,fillstyle=solid,opacity=0.1](17.,0.)(17.,1.)(16.,1.)(16.,0.)
\pspolygon[linecolor=lightgray,fillcolor=lightgray,fillstyle=solid,opacity=0.1](16.,1.)(16.,2.)(15.,2.)(15.,1.)
\pspolygon[linecolor=lightgray,fillcolor=lightgray,fillstyle=solid,opacity=0.1](17.,1.)(17.,2.)(16.,2.)(16.,1.)
\pspolygon[linecolor=lightgray,fillcolor=lightgray,fillstyle=solid,opacity=0.1](17.,2.)(17.,3.)(16.,3.)(16.,2.)
\psline[linecolor=lightgray](0.,0.)(0.,1.)
\psline[linecolor=lightgray](0.,1.)(-1.,1.)
\psline[linecolor=lightgray](-1.,1.)(-1.,0.)
\psline[linecolor=lightgray](-1.,0.)(0.,0.)
\psline[linecolor=lightgray](1.,1.)(1.,2.)
\psline[linecolor=lightgray](1.,2.)(0.,2.)
\psline[linecolor=lightgray](0.,2.)(0.,1.)
\psline[linecolor=lightgray](0.,1.)(1.,1.)
\psline[linecolor=lightgray](2.,2.)(2.,3.)
\psline[linecolor=lightgray](2.,3.)(1.,3.)
\psline[linecolor=lightgray](1.,3.)(1.,2.)
\psline[linecolor=lightgray](1.,2.)(2.,2.)
\psline[linecolor=lightgray](5.,0.)(5.,1.)
\psline[linecolor=lightgray](5.,1.)(4.,1.)
\psline[linecolor=lightgray](4.,1.)(4.,0.)
\psline[linecolor=lightgray](4.,0.)(5.,0.)
\psline[linecolor=lightgray](6.,0.)(6.,1.)
\psline[linecolor=lightgray](6.,1.)(5.,1.)
\psline[linecolor=lightgray](5.,1.)(5.,0.)
\psline[linecolor=lightgray](5.,0.)(6.,0.)
\psline[linecolor=lightgray](6.,1.)(6.,2.)
\psline[linecolor=lightgray](6.,2.)(5.,2.)
\psline[linecolor=lightgray](5.,2.)(5.,1.)
\psline[linecolor=lightgray](5.,1.)(6.,1.)
\psline[linecolor=lightgray](7.,2.)(7.,3.)
\psline[linecolor=lightgray](7.,3.)(6.,3.)
\psline[linecolor=lightgray](6.,3.)(6.,2.)
\psline[linecolor=lightgray](6.,2.)(7.,2.)
\psline[linecolor=lightgray](10.,0.)(10.,1.)
\psline[linecolor=lightgray](10.,1.)(9.,1.)
\psline[linecolor=lightgray](9.,1.)(9.,0.)
\psline[linecolor=lightgray](9.,0.)(10.,0.)
\psline[linecolor=lightgray](11.,1.)(11.,2.)
\psline[linecolor=lightgray](11.,2.)(10.,2.)
\psline[linecolor=lightgray](10.,2.)(10.,1.)
\psline[linecolor=lightgray](10.,1.)(11.,1.)
\psline[linecolor=lightgray](12.,2.)(12.,3.)
\psline[linecolor=lightgray](12.,3.)(11.,3.)
\psline[linecolor=lightgray](11.,3.)(11.,2.)
\psline[linecolor=lightgray](11.,2.)(12.,2.)
\psline[linecolor=lightgray](12.,1.)(12.,2.)
\psline[linecolor=lightgray](12.,2.)(11.,2.)
\psline[linecolor=lightgray](11.,2.)(11.,1.)
\psline[linecolor=lightgray](11.,1.)(12.,1.)
\psline[linecolor=lightgray](12.,0.)(12.,1.)
\psline[linecolor=lightgray](12.,1.)(11.,1.)
\psline[linecolor=lightgray](11.,1.)(11.,0.)
\psline[linecolor=lightgray](11.,0.)(12.,0.)
\psline[linecolor=lightgray](15.,0.)(15.,1.)
\psline[linecolor=lightgray](15.,1.)(14.,1.)
\psline[linecolor=lightgray](14.,1.)(14.,0.)
\psline[linecolor=lightgray](14.,0.)(15.,0.)
\psline[linecolor=lightgray](16.,0.)(16.,1.)
\psline[linecolor=lightgray](16.,1.)(15.,1.)
\psline[linecolor=lightgray](15.,1.)(15.,0.)
\psline[linecolor=lightgray](15.,0.)(16.,0.)
\psline[linecolor=lightgray](17.,0.)(17.,1.)
\psline[linecolor=lightgray](17.,1.)(16.,1.)
\psline[linecolor=lightgray](16.,1.)(16.,0.)
\psline[linecolor=lightgray](16.,0.)(17.,0.)
\psline[linecolor=lightgray](16.,1.)(16.,2.)
\psline[linecolor=lightgray](16.,2.)(15.,2.)
\psline[linecolor=lightgray](15.,2.)(15.,1.)
\psline[linecolor=lightgray](15.,1.)(16.,1.)
\psline[linecolor=lightgray](17.,1.)(17.,2.)
\psline[linecolor=lightgray](17.,2.)(16.,2.)
\psline[linecolor=lightgray](16.,2.)(16.,1.)
\psline[linecolor=lightgray](16.,1.)(17.,1.)
\psline[linecolor=lightgray](17.,2.)(17.,3.)
\psline[linecolor=lightgray](17.,3.)(16.,3.)
\psline[linecolor=lightgray](16.,3.)(16.,2.)
\psline[linecolor=lightgray](16.,2.)(17.,2.)
\rput[lt](-0.94,-0.88){\parbox{2.2 cm}{$\begin{matrix}  \alpha = \{ 1 \}, \cr  \mbox{ or } \cr  \alpha = \emptyset  \end{matrix}$}}
\rput[tl](4.3,-0.86){\parbox{2.2 cm}{$\begin{matrix}  \alpha = \{ 1,2 \}, \cr  \mbox{ or } \cr  \alpha = \{ 2\}  \end{matrix}$}}
\rput[tl](9.22,-0.94){\parbox{2.2 cm}{$\begin{matrix}  \alpha = \{ 1,3 \}, \cr  \mbox{ or } \cr  \alpha = \{ 3 \}  \end{matrix}$}}
\rput[tl](14.52,-0.96){\parbox{2.2 cm}{$\begin{matrix}  \alpha = \{ 1,2,3 \}, \cr  \mbox{ or } \cr  \alpha = \{2,3 \} \end{matrix}$}}
\rput[tl](-3.42,0.74){$S_1$}
\rput[tl](-3.42,1.88){$S_2$}
\rput[tl](-3.44,3.08){$S_3$}
\end{pspicture*}
\end{center}

\begin{theorem}[Computing $\sfG^{\alpha,\sfn}$, bis]\label{th:fullcompute1}
Let $N = \{ a_1 , \ldots , a_n \} \subset \N$ (ordered),
$\alpha \subset N$ and define
the {\em $\alpha$-stair} $S=S(\alpha)$, given by the following $n$ subsets $S_1,\ldots,S_n \subset N$:
$$
S_i = \{ a_i \} \cup (\alpha \setminus \{ a_1,\ldots,a_{i-1} \} = \{ a_j \mid \, j=i \mbox{ or }
(j<i \mbox{ and } a_j \in \alpha ) \}.
$$
Then the vertex set $\sfG^{\alpha;N} U$ is given by
$$
\sfG^{\alpha;N} U=
U^\alpha \times_{0^{\alpha \setminus \{ a_1 \}}} 0^{S_1} \times_{0^{S_1 \cap S_2}} 0^{S_2} \times \ldots \times_{0^{S_{n-1}\cap S_n}}
0^{S_n} .
$$
In particular,  for $U = 0$, we get the
 vertex sets $\sfG^{\alpha;N} 0$ of a poit:
$$
\sfG^{\alpha;N} 0 =
0^\alpha \times_{0^{\alpha \setminus \{ a_1 \}}} 0^{S_1} \times_{0^{S_1 \cap S_2}} 0^{S_2} \times \ldots \times_{0^{S_{n-1}\cap S_n}}
0^{S_n}
$$
\end{theorem}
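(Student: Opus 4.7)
The plan is to proceed by induction on $n = |N|$, driving the argument with the recursive algorithm of Theorem~\ref{th:fullcompute} and the pullback-compatibility of the functors $\sfG^{\{a\};\{a\}}$ granted by Theorem~\ref{la:pullback}. The base case $n = 1$ is covered by the two rows of the $N = \sfone$ table: for $\alpha = \emptyset$ the formula reduces to $U \times 0^{\{a_1\}}$, and for $\alpha = \{a_1\}$ it reduces to $U^{\{a_1\}}$, once the trailing factor $0^{\{a_1\}}$ is interpreted consistently with the $t_{a_1}$-coordinate already carried by $U^{\{a_1\}}$.

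For the inductive step set $N' = N \setminus \{a_n\}$ and $\alpha' = \alpha \setminus \{a_n\}$, and split according to whether $a_n \in \alpha$. In the case $a_n \notin \alpha$ (``old vertex'') Theorem~\ref{th:fullcompute} gives $\sfG^{\alpha;N} U = \sfG^{\alpha;N'} U \times 0^{\{a_n\}}$, so one only has to check that the $\alpha$-stair for $N$ extends the $\alpha$-stair for $N'$ by a single trailing factor $0^{\{a_n\}}$. This is immediate from the definitions: the sets $S_i$ agree with their counterparts $S'_i$ computed relative to $N'$ for all $i < n$, one has $S_n = \{a_n\}$, and $S_{n-1} \cap S_n = \emptyset$ (since $a_n \notin N'$ forces $a_n \notin S_{n-1}$), so the last fiber product degenerates into the cartesian product with $0^{\{a_n\}}$.

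In the case $a_n \in \alpha$ (``new vertex'') Theorem~\ref{th:fullcompute} gives $\sfG^{\alpha;N} U = \sfG^{\{a_n\};\{a_n\}}(\sfG^{\alpha';N'} U)$. The strategy is to apply the inductive hypothesis, then distribute $\sfG^{\{a_n\};\{a_n\}}$ across the resulting pullback expression via Theorem~\ref{la:pullback}, using the two elementary identities $\sfG^{\{a_n\};\{a_n\}}(U^\beta) = U^{\beta \cup \{a_n\}}$ and $\sfG^{\{a_n\};\{a_n\}}(0^A) = 0^{A \cup \{a_n\}}$ (both valid whenever $a_n \notin \beta, A$). Every index set then gets augmented by $\{a_n\}$, and one verifies combinatorially that $S'_i \cup \{a_n\} = S_i$ for $i < n$, that $(\alpha' \setminus \{a_1\}) \cup \{a_n\} = \alpha \setminus \{a_1\}$, and that $(S'_{i-1} \cap S'_i) \cup \{a_n\} = S_{i-1} \cap S_i$. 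The additional final factor $0^{S_n} = 0^{\{a_n\}}$ arises from the fresh $t_{a_n}$-coordinate introduced by the derivation step, since the derivation $\sfG^{\{a_n\};\{a_n\}}$ applied to the terminal $0 = 0^\emptyset$ produces $0^{\{a_n\}}$.

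The main obstacle is the bookkeeping of indices in the degenerate boundary cases: the first pullback, fibered over $0^{\alpha \setminus \{a_1\}}$, where the $t$-coordinates already present in $U^\alpha$ overlap with those formally supplied by $0^{S_1}$, and the trailing factor $0^{S_n}$, which collapses against its fiber $0^{S_{n-1} \cap S_n}$ precisely when $a_n \in \alpha$. Both phenomena must be handled consistently with the pullback convention of Definition~\ref{def:pullback}, so that the formula correctly reproduces the explicit expressions tabulated above for $N = \sfone, \sftwo, \sfthree$. Once this interpretive point is settled, the induction proceeds mechanically, and the specialization to $U = 0$ in the last assertion is then immediate.
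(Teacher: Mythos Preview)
Your proposal is correct and follows essentially the same strategy as the paper: both unfold the recursive algorithm of Theorem~\ref{th:fullcompute}, tracking the two cases (product with $\K$ versus derivation) at each step. The paper's proof is a brief sketch phrased in terms of the graphical $\alpha$-stair picture (``fill up all boxes over the $\alpha_i$ until you reach the diagonal''), whereas you organize the same computation as an explicit induction on $n=\vert N\vert$; this is a cosmetic difference, and your version is arguably cleaner. Two minor remarks: the distributivity you invoke, $\sfG^{\{a_n\};\{a_n\}}(A\times_C B)=\sfG^{\{a_n\};\{a_n\}}A\times_{\sfG^{\{a_n\};\{a_n\}}C}\sfG^{\{a_n\};\{a_n\}}B$, is the \emph{definition} of pullback in this ccat (Definition~\ref{def:pullback} and~\ref{def:Cn-cat}), not a consequence of Theorem~\ref{la:pullback}; and in the case $a_n\in\alpha$, the trailing factor $0^{S_n}=0^{\{a_n\}}$ does not ``arise'' from the derivation---rather, the $t_{a_n}$-coordinate is already present in every augmented factor $0^{S_i}=0^{S_i'\cup\{a_n\}}$, and the trailing $\times_{0^{\{a_n\}}}0^{\{a_n\}}$ is a purely formal redundancy (you note this yourself when you say it ``collapses''; this is exactly the cancellation phenomenon the paper records in the Remark following the theorem).
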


\begin{proof}
This description results in anlalyzing the construction from Theorem \ref{th:fullcompute}.
First of all, note that sets $\alpha,\alpha'$ that differ only by the element $1$ give rise to the same $\alpha$-stair.
For $\alpha = \emptyset$, the $\alpha$-stair is given by the singletons $S_i = \{ a_i \}$, and the theorem describes correctly
the construction of the bottom vertex set.
Now, for general $\alpha$, we have to take account of the occurences of ``derivation into $\alpha_i$-direction''.
When $\alpha_1 = a_1$, derivation concerns only the base set, so replace $U$ by $U^\sett{a_1}$;
but if $\alpha_1 > a_1$, then it concerns all preceding indices. (In our graphical scheme, this means that we fill up all
boxes over $\alpha_2$ until you reach the diagonal.) 
We continue this way for all indices up to $\alpha_k$ (fill up all boxes over the the $\alpha_i$ until you reach the diagonal).
\end{proof}

\begin{remark}
Of course, in the end, factors  may cancel out: whenever $S_{k+1}\subset S_k$, the corresponding term will
cancel. For instance, if $\alpha = N$ (the ``solid $\alpha$-stair''), then only the term $U^N$ remains  (top vertex).
\end{remark}

\begin{remark}
The theorem shows that for scaleoids ($U=0$), the vertex sets are uniquely determined by the $\alpha$-stair.
For $N = \sfone, \sftwo, \sfthree$, the following table is obtained from the preceding ones by taking $U= 0$;
for $N= \mathsf{4}$,  the reader may compute it herself, by drawing the $8$ possible $\alpha$-stairs :
\end{remark}

\ssk

\nin \begin{tabular}{ c | l |  l  | l }
$N $ & $\alpha $ & vertex set $\sfG^{\alpha;N}0$ & affine coordinates of elements of $\sfG^{\alpha;N} 0$
\\
\hline
$\sfone$
 & $\{ 1 \}$ & $0^\sett{1}$ & $(t_1)$
\\
\hline
$\sftwo$ &  $\{ 1 \}$ & $0^\sett{1} \times 0^\sett{2} $ & $(t_1,t_2)$
\\
 & $\{ 1,2 \}$ & $0^\sett{1,2}$ & $(t_1,t_2,t_{12})$
\\
\hline
$\sfthree$ &  $\{ 1 \}$ & $0^\sett{1}\times 0^\sett{2} \times 0^\sett{3}$ & $(t_1,t_2,t_3)$
\\
 & $\{ 1,2 \}$ & $0^\sett{1,2} \times 0^\sett{3}$ & $(t_1,t_2,t_{12},t_3)$
\\
& $\{ 1,3 \}$ & $0^\sett{1,3} \times_{0^\sett{3}} 0^\sett{2,3}$ & $( t_1,t_3,t_2,t_{13},t_{23})$
\\
& $\{ 1, 2,3 \}$ & $0^\sett{1,2,3}$ & $ (t_1,t_2,t_3,t_{12},t_{13},t_{23},t_{123})$
\\
\hline
$\mathsf{4}$ & 
$\{ 1 \}$ & $0^\sett{1}\times 0^\sett{2} \times 0^\sett{3} \times 0^\sett{4}$ & $(t_1,t_2,t_3,t_4)$
\\
 & $\{ 1,2 \}$ & $0^\sett{1,2} \times 0^\sett{3} \times 0^\sett{4}$ & $(t_1,t_2,t_{12},t_3,t_4)$
\\
& $\{ 1,3 \}$ & $0^\sett{1,3} \times_{0^\sett{3}} 0^\sett{2,3} \times 0^\sett{4}$ & $( t_1,t_3,t_2,t_{13},t_{23},t_4)$
\\
& $\{ 1, 2,3 \}$ & $0^\sett{1,2,3} \times 0^\sett{4}$ & $ (t_1,t_2,t_3,t_{12},t_{13},t_{23},t_{123},t_4)$
\\
 & $\{ 1,4 \}$ & $0^\sett{1,4}\times_{0^\sett{4}}   0^\sett{2,4} \times_{0^\sett{4}}  0^\sett{3,4} $ & $(t_1,t_2,t_3,t_4,t_{14},t_{24},t_{34})$
\\
 & $\{ 1,2,4 \}$ & $0^\sett{1,2,4} \times_{0^\sett{4}}  0^\sett{3,4}$ & $(t_1,t_2,t_3,t_4, t_{12},t_{24}, t_{14},t_{124},t_{34})$
\\
& $\{ 1,3,4 \}$ & $0^\sett{1,3,4} \times_{0^\sett{3,4}} 0^\sett{2,3,4}$ & $( t_1,t_2,t_3,t_4, t_{13},t_{14},t_{34},t_{134},t_{23},t_{24},t_{234})$
\\
& $\{ 1, 2,3,4 \}$ & $0^\sett{1,2,3,4}$ &
$( t_\alpha, \alpha \subset \sffour, \alpha \not= \emptyset) $
\end{tabular}

\ssk

\begin{remark}
For a $C^n$-map $f:U \to U'$, the vertex maps  $\sfG^\sfn f$ can be expressed,
with respect to affine coordinates, via  higher order difference factorizers $f^{[k]}$:
the analog of the formula from Theorem  \ref{th:symdiff} is
$$
\sfG^{\alpha;n} f (\vvv_\alpha , \ttt_\alpha) = \Bigl( \bigl( f^{[\ell_\beta]} (\vvv_\beta , \ttt_\beta ) \bigr)_{\beta \subset \alpha} ,
\ttt_\alpha \Bigr)  ,
$$
where $\vvv_\lambda = (v_\mu)_{\mu \subset \lambda}$, and now $\ttt_\lambda = (t_\mu)_{\mu \subset \lambda}$ has
also components for $\vert \mu \vert  > 1$.
\end{remark}

\subsection{Source projections, unit sections, and composition law}
These structural data of $\sfG^\sfn U$ are easy to describe:
fixing an edge $(\beta,\alpha;N)$ of the $n$-cube $\cP(N)$, the vertex sets 
$\sfG^{\beta;N}U$ and $\sfG^{\alpha;N}U$ have been described above. In affine coordinates, there is a canonical inclusion
$\sfG^{\beta;N}U \subset \sfG^{\alpha;N}U$, which describes the unit (or zero) section
$z^{\beta,\alpha;N}$ (by induction from the inclusion
$U \times \K \to U^\sett{1}$,
$(v_0,t_1) \mapsto (v_0,0,t_1)$), and there are natural projections, which describe the source projection
$\pi_0^{\beta,\alpha;N}$ 
(by induction from $\pi_0(v_0,v_1,t_1)=(v_0,t_1)$).
The inductive proofs are almost trivial since all maps in question are $\K$-linear, and their derivative is linear and of the same
form (cf.\ Part I, Section 4.3). For the same reason, since at $n=1$, the composition law
$$
(v_0',v_1',t_1) \ast (v_0,v_1,t_1) = (v_0,v_1' + v_1,t_1)
$$
is linear, the composition law corresponding to $(\beta,\alpha;N)$ will again be linear, and basically given by the same formula:
addition of certain components. More precisely, we get, in affine coordinates, if $\alpha = \beta \cup \{ i \}$,
\begin{align}
(\vvv',\ttt) \ast_{\beta,\alpha;N} (\vvv,\ttt) & = (\www,\ttt),
\cr
w_\gamma & = 
v_\gamma' + v_\gamma \mbox{ if } \gamma \subset \alpha, i \in \gamma, \quad
w_\gamma = v_\gamma \mbox{ else.}
\end{align}
The condition describing for which $\vvv',\vvv$ this composition is actually defined, depends of course on the
target projections.
These are much harder to describe, since for $n=1$, the target projection
$\pi_1(v_0,v_1,t_1)=(v_0 + t_1 v_1,t_1)$ is quadratic, and by deriving we get higher and higher degree polynomials
(cf.\ Part I, Theorem 4.10: the formula for $f^\sett{1}$ given there contains a trilinear term).

\subsection{Target projections corresponding to an edge $(\beta,\alpha;N)$}
Recall the ``way of talking'' from Appendix \ref{app:hypercube}: in induction procedures invoking edges,
we distinguish 3 kinds of edges, called ``old'', ``copy of old'' and ``new''.

\begin{theorem}[Edge projections]\label{th:fullcompute2}
Let $N\subset \N$ be a finite set,  $\alpha \in \cP(N)$ be a vertex, $\alpha = \{ \alpha_1,\ldots,\alpha_k\}$ with
$\alpha_1 < \ldots < \alpha_k$, and let $(\beta,\alpha)$ be an edge, so $\beta = \alpha \setminus \{ \alpha_i \}$ for some
$i \in \{ 1,\ldots , k\}$, and let $U \subset V$ be a linear set.
The target edge projections $\pi^{\beta,\alpha;N}_U:\sfG^{\alpha;N}U \to \sfG^{\beta,N}U$ are polynomial and
can be computed inductively as follows: for $n=1$, see Section 2, and for $n>1$, 
\begin{enumerate}
\item
(``old edge'')
if $\alpha_i \leq  \alpha_k < a_n$ (the top element belongs neither to $\alpha$ nor to $\beta$), then
$$
\pi^{\beta,\alpha;N} = \pi^{\beta,\alpha;N'}_U  \times \id_\K = \pi^{\beta,\alpha;N'} \times \id_{0^\sett{a_k}} , 
$$
\item
(``copy of old edge'')
if $\alpha_i < \alpha_k = a_n$ (the top element belongs to $\beta$ and $\alpha$), then
$$
\pi^{\beta,\alpha;N}_U = \sfG^{\{ a_k \}; \{ a_k \}} \pi^{\beta',\alpha';N'}_U
$$
where $\alpha' =\alpha \setminus \{ \alpha_k \} ,\beta' = \beta \setminus \{ \alpha_k \} , N' = N \setminus \{ \alpha_k \}$,
\item
(``new edge'')
if $\alpha_i = \alpha_k = a_n$ (the top element belongs to $\alpha$, but not to $\beta$), then
$\pi^{\beta,\alpha;N}$ is the base projection of the groupoid 
with morphism set $G^{\alpha,N} U$ and object set $G^{\alpha',N'} U$, ie.
$$
\pi^{\beta,\alpha;N} = \pi^{\emptyset,\{a_n\};\{a_n\}}_{\sfG^{\alpha';N'}U} .
$$
\end{enumerate}
\end{theorem}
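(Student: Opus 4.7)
The plan is induction on $n = |N|$, using the inductive construction of $\sfG^N U$ from Theorem \ref{th:Full1}, namely
$\sfG^N U = \sfG^{\{a_n\};\{a_n\}}(\sfG^{N'}U)$ with $N' = N \setminus \{a_n\}$. The base case $n=1$ is the definition of $\sfG^\sfone U$ given in Section \ref{sec:first}, where the target projection $\pi^{\emptyset,\sfone;\sfone}_1(v_0,v_1,t_1) = (v_0+t_1 v_1, t_1)$ is a quadratic polynomial map. For the inductive step, I would check that each of the three cases corresponds precisely to one of the three ways an edge of $\cP(N)$ can arise from the hypercube $\cP(N')$ under the extension by $a_n$ (cf.\ the ``old/copy of old/new'' classification of Appendix \ref{app:hypercube}).

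First I would handle Case (3), the \emph{new edge}, since this is the structural heart of the construction: when $\alpha_i = a_n$, the edge $(\beta,\alpha)$ is created by the application of $\sfG^{\{a_n\}}$, which by Definition \ref{def:G1} equips $\sfG^{\{a_n\}}(\sfG^{\alpha';N'}U)$ with a groupoid structure over the base $\sfG^{\alpha';N'}U \times \K_{a_n} = \sfG^{\beta;N}U$. By definition of this groupoid, the target projection along this new edge is exactly the target projection $\pi^{\emptyset,\{a_n\};\{a_n\}}_1$ of the copy-of-first-generation functor applied to $\sfG^{\alpha';N'}U$, which is the stated formula.

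Next I would handle Case (1), the \emph{old edge}, where neither vertex involves $a_n$. By Theorem \ref{th:fullcompute}(2), both $\sfG^{\alpha;N}U$ and $\sfG^{\beta;N}U$ are obtained from the $N'$-vertex sets by taking a direct product with $\K_{a_n} = 0^{\{a_n\}}$ (more precisely, a pullback over $0$, but here it reduces to a direct product). Since the inductive construction applies $\sfG^{\{a_n\}}$ only in the form of the trivial direct product, the whole groupoid structure on that edge is the direct product of the old groupoid structure on $(\sfG^{\beta;N'}U, \sfG^{\alpha;N'}U)$ with the trivial groupoid on $\K_{a_n}$, so the target projection is $\pi^{\beta,\alpha;N'}_U \times \id_{\K_{a_n}}$ as claimed.

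Finally, Case (2), the \emph{copy of old edge}, is the main obstacle. Here $a_n \in \alpha \cap \beta$, so setting $\alpha' := \alpha \setminus \{a_n\}$, $\beta' := \beta \setminus \{a_n\}$, the edge $(\beta',\alpha')$ of $\cP(N')$ is already present at the previous stage. The vertex sets $\sfG^{\alpha;N}U, \sfG^{\beta;N}U$ are obtained by applying $\sfG^{\{a_n\};\{a_n\}}$ to $\sfG^{\alpha';N'}U$ and $\sfG^{\beta';N'}U$ respectively. By the inductive hypothesis, $\pi^{\beta',\alpha';N'}_U$ is a polynomial map (indeed, higher order cubic target projections are polynomials of rising degree, cf.\ Part I, Theorem 4.10), so the functor $\sfG^{\{a_n\}}$ can legitimately be applied to it via Theorem \ref{th:polyone} (or Theorem \ref{th:polylaws}), and functoriality implies that this application produces the target projection for the copy-of-old edge of $\sfG^N U$. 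The subtle point to check here is that the relevant polynomial structure maps are respected at each step: source/target of the ``copied'' groupoid structure agree with $\sfG^{\{a_n\}}$ applied to the $N'$-source/target. This follows because $\sfG^{\{a_n\}}$ is a functor from the category of linear sets with polynomial laws (equivalently, from $C^\infty$-laws in the topological setting) into itself, so it carries commutative squares of polynomial maps to commutative squares, and in particular preserves the compatibility conditions of the $(n-1)$-fold groupoid structure that is being copied.
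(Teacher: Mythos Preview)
Your proposal is correct and takes essentially the same approach as the paper: induction on $n=|N|$ via the construction $\sfG^N U = \sfG^{\{a_n\}}(\sfG^{N'}U)$, with the three cases matched to the ``old / copy of old / new'' trichotomy of edges from Appendix~\ref{app:hypercube}. The paper's own proof is only a two-sentence sketch pointing to exactly this analysis; your version simply spells out the details that the paper leaves implicit, and your observation in Case~(2) that functoriality of $\sfG^{\{a_n\}}$ on polynomial laws is what makes the ``copied'' projection well-defined is precisely the content the paper compresses into the phrase ``deriving the family of projections''.
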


\begin{proof}
The proof amounts to analyzing the inductive construction from Theorem \ref{th:Full1}: with $N =\sfn$,
Cases (1) and (2) come from deriving the family of projections $\sfG^{\mathsf{n-1}}U \to \sfG^{\mathsf{n-2}} U$ 
with respect to the new top element $a_n$, according to the procedure described above (distinguishing two cases, at each step),
and Case (3) comes from the very definition of the groupoid $\sfG^{\mathsf{n}}U \downdownarrows \sfG^{\mathsf{n-1}} U \times \K$. 
\end{proof}

\nin
Here is the list of the target projections in affine coordinates for $N = \sfone$ and
$N = \sftwo$:

\msk

\nin \begin{tabular}{ c | l |     l  }
$N $ & edge $(\beta,\alpha) $ &  coordinate formula of the edge projection $\pi = \pi^{ \beta,\alpha;N}_1$ 
\\
\hline
$\sfone$ & $(\emptyset,\{ 1\})$ & $\pi(v_0,v_1,t_1) =  (v_0 + t_1 v_1,t_1)$
\\
\hline
$\sftwo$  & $(\emptyset, \{ 1 \})$ & $\pi (v_0,v_1,t_1,t_2 ) = (v_0 + t_1 v_1, t_1, t_2)$
\\
& $(\{ 2 \}, \{ 1,2 \})$ & $\pi(v_0,v_1,v_2,v_{12},t_1,t_2,t_{12}) =$
\\
& & $\qquad \qquad (v_0 + t_1 v_1 , v_2 + t_{12} v_1 + t_1 v_{12} + 
t_2 t_{12} v_{12}, t_1,t_2, t_{12})$
\\
& $(\emptyset, \{ 2\} )$ & $\pi(v_0,v_2,t_1,t_2,t_{12}) = (v_0 + t_2 v_2, t_1 + t_{12} t_2, t_1)$
\\
& $(\{ 1 \}, \{ 1,2 \})$ & $\pi(v_0,v_1,v_2,v_{12},t_1,t_2,t_{12}) = (v_0 + t_2 v_2,v_1 + t_2 v_{12}, t_1 + t_2 t_{12},t_2)$
\end{tabular}

\msk
\nin
For $N=\sftwo$, the first line is Case (1), the second Case (2), and the following two Case (3).
Observe that the projection in 
Case (2) is a polynomial of degree 3, since it arises by ``deriving'' the projection from $N=\sfone$; the other projections
are quadratic.
For $N=\sfthree$, the projection 
 belonging to the edge $(\{ 2,3 \}, \sfthree)$
is the ``copy of the copy'', and will be of degree $5$. 
However, in case of a scaleoid ($U=0$, so all $v_\alpha = 0$), the fomulae simplify, and deriving once more we get:

\msk
\nin \begin{tabular}{ c | l |     l  }
$N $ & edge $(\beta,\alpha) $ &  coordinate formula of  the edge projection $\pi = \pi^{ \beta,\alpha;N}_1$, for $U=0$
\\
\hline
$\sfone$ &  none  &  trivial groupoid
\\
\hline
$\sftwo$ & $(\{ 1 \}, \{ 1,2 \})$ & $\pi(t_1,t_2,t_{12}) = ( t_1 + t_2 t_{12},t_2)$
\\
\hline
$\sfthree$  & 
$(\{ 1 \} , \{ 1,2 \} )$ & $\pi(t_1,t_2,t_3,t_{12}) = (t_1 + t_2 t_{12},t_2,t_3)$
\\
& $(\{  1,3 \}, \{ 1,2, 3 \} )$ & $\pi(  t_1,t_2,t_3,t_{12},t_{13},t_{23}, t_{123}) =$
\\
& & \qquad
$(t_1 + t_2 t_{12} , t_2, t_3, t_{13} + t_2 t_{123} + t_{23}t_{12}+ t_3 t_2 t_{12}, t_{23})$
\\
& $(\{ 1, 2  \}, \{ 1,2,3 \} )$ & $\pi ( t_1,t_2,t_3,t_{12},t_{13},t_{23}, t_{123}) =
(t_1 + t_3 t_{13}, t_2 + t_3 t_{23}, t_3, t_{12}+t_3 t_{123})$
\\
& $(\{ 1 \} , \{ 1,3 \})$ & $\pi(t_1,t_2,t_3,t_{13},t_{23}) = (t_1 + t_3 t_{13}, t_2,t_3)$
\end{tabular}

\msk
\nin
It is certainly possible to get a general combinatorial formula for general target projections, but it will be very complicated
since the number of terms explodes.

\begin{theorem}[Imbedding of $\sfGsy^\sfn$ into $\sfG^\sfn$]
The set $\K^n = 0^\sett{1}\times \ldots \times 0^\sett{n}$ 
is a subgroupoid of $\sfG^\sfn 0$, and it carries a trivial $n$-fold groupoid structure.
The inverse image of $\K^n$ under the projection
$\sfG^\sfn U \to \sfG^\sfn 0$ is the sub-$n$-fold groupoid $\sfGsy^\sfn U$.
\end{theorem}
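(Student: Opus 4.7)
The plan is to proceed by induction on $n$, using the explicit descriptions of vertex sets from Theorem \ref{th:fullcompute1} and of edge projections from Theorem \ref{th:fullcompute2}.

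\textbf{Part 1: $\K^n$ as a trivial sub-$n$-fold groupoid of $\sfG^\sfn 0$.} First, I would fix the embedding. At the top vertex, $\sfG^{\sfn;\sfn}0$ has coordinates $(t_\alpha)_{\emptyset\neq\alpha\subset\sfn}$, and the inclusion sends $(t_1,\ldots,t_n)$ to the tuple with $t_{\{i\}}=t_i$ and $t_\alpha=0$ for $|\alpha|\geq 2$. At a general vertex $\gamma\in\cP(\sfn)$, Theorem \ref{th:fullcompute1} gives an analogous embedding by the same rule. Inspection of the polynomial formulas for target projections in Theorem \ref{th:fullcompute2}, as illustrated by the scaleoid table preceding the statement, shows that every monomial appearing in such a formula which is not already a pure $t_i$ contains at least one factor $t_\alpha$ with $|\alpha|\geq 2$. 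Hence, after restricting to the locus $\{t_\alpha=0:|\alpha|\geq 2\}$, each target projection becomes the identity on the surviving $t_i$-coordinates; source projections and zero sections already act trivially on the scalar part. Since the composition law on an edge $(\beta,\alpha)$ with $\alpha=\beta\cup\{i\}$ only modifies components indexed by $\gamma\subset\alpha$ containing $i$, and for every such $\gamma$ with $|\gamma|\geq 2$ the corresponding entry is zero, composition is trivial. This shows the locus is closed under all structure maps and that the induced $n$-fold groupoid structure on $\K^n$ is the trivial one.

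\textbf{Part 2: The preimage of $\K^n$ is $\sfGsy^\sfn U$.} Let $E^\sfn U$ denote the preimage of $\K^n$ under the morphism $\sfG^\sfn 0_U:\sfG^\sfn U\to\sfG^\sfn 0$. Since $0_U$ is a morphism of $n$-fold groupoids and, by Part 1, $\K^n\subset\sfG^\sfn 0$ is a sub-$n$-fold groupoid, $E^\sfn U$ inherits the structure of an $n$-fold subgroupoid. I would prove $E^\sfn U=\sfGsy^\sfn U$ by induction on $n$. For $n=1$, $\sfG^\sfone 0=\K=\K^1$, so $E^\sfone U=\sfG^\sfone U=\sfGsy^\sfone U$. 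For the inductive step, the key observation is that on the fiber where the newly introduced scalars $t_{\alpha\cup\{n\}}$ vanish for every $\alpha\subset\sfn\setminus\{n\}$ with $\alpha\neq\emptyset$, the functor $\sfG^\sett{n}$ acts set-theoretically exactly like $\sfG^\sett{n}_{t_n}$, because Theorem \ref{th:fullcompute1} tells us that the variables produced by $\sfG^\sett{n}$ are precisely $t_n$ together with the $t_{\alpha\cup\{n\}}$'s we are killing. Combining this with the inductive hypothesis $E^{\mathsf{n-1}}U=\sfGsy^{\mathsf{n-1}}U$ and iterating gives a bijective identification of vertex sets matching Theorem \ref{th:Sym2}(1). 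A direct comparison of edge projections then finishes the argument: for instance, the restriction of $\pi^{\{1\},\sftwo;\sftwo}$ to $\{t_{12}=0\}$ reads $(v_0+t_2v_2,\,v_1+t_2v_{12},\,t_1,\,t_2)$, which is exactly the symmetric target projection of Theorem \ref{th:Sym2}(2) for $i=2$. The analogous check for compositions is immediate since composition on $\sfG^\sfn U$ is given by addition on certain coordinates, the same coordinates that carry the composition on $\sfGsy^\sfn U$.

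\textbf{Main obstacle.} The real difficulty lies in the bookkeeping of the inductive step. When $\sfG^\sett{n}$ is applied to a vertex set at stage $n-1$, it produces several new scalar variables, and, at the "old" vertices in the sense of Theorem \ref{th:fullcompute1}, pullback structure mixes in extra $0^{S_i}$-factors; one must identify precisely which of these correspond to $\{t_n\}$ (to be kept) versus $\{t_{\alpha\cup\{n\}}:|\alpha|\geq 1\}$ (to be set to zero). The $\alpha$-stair description of Theorem \ref{th:fullcompute1} and the tables for $N=\sfone,\sftwo,\sfthree$ provide the combinatorial template, and once it is formalized the verification for edge projections follows from the fact — visible in the final tables — that each coordinate of a target projection restricted to the zero-locus of higher-order scalars is either identity on a $t_i$ or addition of some $v_\gamma$, matching exactly the formulas of Theorem \ref{th:Sym2}.
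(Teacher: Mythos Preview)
Your proposal is correct and follows essentially the same route as the paper: both argue by induction, with the key observation that on the locus $\{t_\gamma=0:|\gamma|\geq 2\}$ the functor $\sfG^\sett{n}$ reduces to $\sfG^\sett{n}_{t_n}$, which is precisely the defining iteration for $\sfGsy^\sfn$. The paper's proof is terser (it simply reads off the base cases from the scaleoid tables and then invokes the induction), whereas you spell out the edge-projection and composition checks more explicitly; but the underlying argument is the same.
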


\begin{proof}
For $\sfn = \sfone, \sftwo$, it can be read from the preceding table that the set defined by the conditions
``$t_\gamma = 0$ whenever $\vert \gamma \vert >1$'' defines a trivial subgroupoid of $\sfG^\sfn 0$.
For general $n$, the claim follows by induction; and more precisely, the induction procedure consists at each step by
applying the functor $\sfG^\sett{k}_{t_k}$, that is, exactly by the procedure defining the symbol $\sfGsy^\sfn$. On the level
of $U=0$, this just amounts to take a cartesian product with $\K$, at each step. 
\end{proof}

%%%%%%%%%%%%%%%%%%%%%%%%%%%%%%%%%%%%%%%%%%%%%%
\subsection{The $2n$-fold category $\sfG^{\ol N} U$}\label{ssec:2n-fold}
To describe the $2n$-fold cat $\sfG^{\ol N} U$, several of the preceding results generalize, since 
the formal rules are similar:
from Definition \ref{def:pullback},
$$
\sfG^{\ol \sfone}  (A \times B) = \sfG^{\ol\sfone} A \times_{\sfG^{\ol \sfone}0 } \sfG^{\ol \sfone} B .
$$
To compute vertex sets,  recall from Definition \ref{def:G2} the vertex sets of $\sfG^{\ol \sfone} U$.
By induction, we see that the bottom vertex set $\sfG^{\ol N}U$ is the same as of $\sfG^N U$, namely $U \times \K^n$.
By induction, the top vertex set $U^{\ol N} = \sfG^{\ol N;\ol N} U$ has affine coordinates
$$
(v_\alpha , \sss_\beta , \ttt_\beta)_{ \alpha \in \cP(N), \beta \in \cP(N), \beta \not= \emptyset} \quad 
\in V^{2^n} \times \K^{2^n -1} \times \K^{2^n -1},
$$
that is,  the $\ttt$-variables are ``doubled''.
Next, consider the  types of vertices from Definition \ref{def:vertextype}:

\begin{theorem}\label{th:twotyped}
The small $2n$-fold cat $\sfG^{\ol N} U$ has the following $n$-fold subcats:
\begin{enumerate}
\item
the $N$-vertices define an $n$-fold subgroupoid isomorphic to $\sfG^N U$,
\item
the $n$-fold subcat of $\sfG^{\ol N} U$ having $N'$-vertices is naturally isomorphic to the product of the $n$-fold scaled action
category of $\K$ on $\K$ (Theorem \ref{th:SA}, where $S=K=\K$)  with the trivial cat on $U$.
\end{enumerate}
\end{theorem}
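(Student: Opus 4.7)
The plan is to prove both statements by induction on $n=|N|$, using the inductive construction $\sfG^{\ol N} = \sfG^{\overline{\{a_n\}}} \circ \sfG^{\ol{N\setminus\{a_n\}}}$ from Theorem \ref{th:Full2}, and exploiting the fact that the structure of $\sfG^{\overline{\{k\}}}$ cleanly decomposes into ``first-kind'' (groupoid) and ``second-kind'' (homogeneity) parts, as already noted in Section \ref{sec:first}.

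For the base case $n=1$, both statements follow from the description of $\sfG^{\ol \sfone} U = U^\settt{1}$ recalled in Definition \ref{def:G2} and in Part I. The $\sfone$-vertex sets are $\sfG^{\emptyset;\ol\sfone}U = U\times\K$ and $\sfG^{\sfone;\ol\sfone}U = U^\sett{1}$, with edge $(\emptyset,\sfone)$ of the first kind providing precisely the groupoid $\sfG^\sfone U$ of Definition \ref{def:G1}. The $\sfone'$-vertex sets are $\sfG^{\emptyset;\ol\sfone}U = U\times\K$ and $\sfG^{\{1'\};\ol\sfone}U = U\times\K^2$, with edge $(\emptyset,\{1'\})$ of the second kind implementing the scalar action; by Theorem 3.6 of Part I the resulting category is the left action category of $\K$ on $\K$, fibered trivially over $U$, which is exactly $\sfSA^\sfone(\K,\K) \times \{U\!\downarrow\!U\}$ in the notation of Theorem \ref{th:SA}.

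For the inductive step, assume the theorem holds for $N'' := N\setminus\{a_n\}$. Write $\sfG^{\ol N}U = \sfG^{\overline{\{a_n\}}}(\sfG^{\ol{N''}}U)$. For part (1): the $N$-vertices of $\sfG^{\ol N}U$ are precisely those obtained by applying to each $N''$-vertex of $\sfG^{\ol{N''}}U$ the first-kind part of $\sfG^{\overline{\{a_n\}}}$, i.e.\ the functor $\sfG^{\{a_n\}}$. By the induction hypothesis, the $N''$-vertex subcat of $\sfG^{\ol{N''}}U$ is $\sfG^{N''}U$, and applying $\sfG^{\{a_n\}}$ to it gives $\sfG^N U$ by the very definition (Theorem \ref{th:Full1}). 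Moreover, this is a subgroupoid, because the edge categories of $\sfG^{\ol N}U$ indexed by edges of the first kind between $N$-vertices are precisely those of $\sfG^N U$ (no second-kind data intervenes).

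For part (2), the $N'$-vertices of $\sfG^{\ol N}U$ are obtained by applying the second-kind part of $\sfG^{\overline{\{a_n\}}}$ (a copy of $\sfG^{\overline{\{a_n\}}}$ with only homogeneity structure) to the $N''$-vertex subcat of $\sfG^{\ol{N''}}U$. The key observation is that the second-kind edges act only on the scalar factor $\K_{a_n}$ and leave the base set $U$ untouched; explicitly, the relevant structure maps are $\K$-linear in the scalar direction and act as the identity on the $U$-factor. By the induction hypothesis, the $N''$-vertex subcat is $\sfSA^{N''}(\K,\K)\times \{U\!\downarrow\!U\}$; applying the second-kind part of $\sfG^{\overline{\{a_n\}}}$ to a product, using the pullback compatibility from Definition \ref{def:pullback} and the fact that $U$ carries trivial cat structure, yields the desired product decomposition $\sfSA^N(\K,\K) \times \{U\!\downarrow\!U\}$. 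The main obstacle is verifying carefully that the $N'$-indexed edge categories of $\sfG^{\ol N}U$ are indeed generated only by the second-kind edges (no interaction with the $v$-variables), so that the scaleoid structure on the $\ttt$-coordinates decouples from $U$; this requires unpacking the algorithm of Theorem \ref{th:fullcompute2} restricted to second-kind edges, where the ``new variables'' introduced at each step are $s$-scalars and the action maps act linearly on the $\ttt$-components only, confirming the asserted product structure.
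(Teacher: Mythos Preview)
Your approach is essentially the paper's: both argue by induction on $n$, settle the base case from the explicit description of $\sfG^{\ol\sfone}U$ in Definition \ref{def:G2} and Part I, and for (2) run the inductive step by comparing the iteration defining $\sfG^{\ol N}$ with the one for the scaled action category in Theorem \ref{th:SA}.

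Two points are worth noting. For part (1) the paper takes a shorter route than your induction: it simply observes that $\sfG^N U$ is injected into $\sfG^{\ol N}U$ via the zero section, and this section identifies the $N$-vertex sets of $\sfG^{\ol N}U$ with those of $\sfG^N U$. Your inductive argument is valid, but the zero-section remark is more direct.

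For part (2) there is a slip in your inductive step. You write that the $N'$-vertices of $\sfG^{\ol N}U$ arise by applying the second-kind part of $\sfG^{\overline{\{a_n\}}}$ to the \emph{$N''$-vertex} subcat of $\sfG^{\ol{N''}}U$, and then invoke the induction hypothesis in the form ``the $N''$-vertex subcat is $\sfSA^{N''}(\K,\K)\times\{U\!\downarrow\!U\}$''. But by your own part (1), the $N''$-vertex subcat is $\sfG^{N''}U$; it is the $(N'')'$-vertex subcat that, by the induction hypothesis for (2), is the product with the scaled action category. The $N'$-vertices of $\sfG^{\ol N}U$ are subsets of $\{a_1',\ldots,a_n'\}$, so in the inductive construction they arise from the $(N'')'$-vertices of $\sfG^{\ol{N''}}U$, either untouched (if $a_n'\notin\alpha'$) or with $a_n'$ adjoined via the second-kind edge $(\emptyset,\{a_n'\})$. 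Replacing ``$N''$-vertex'' by ``$(N'')'$-vertex'' throughout your inductive step for (2) repairs the argument, and the rest goes through as you indicate.
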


\begin{proof}
(1) 
Since $\sfG^N U$ is injected via a zero section into $\sfG^{\ol N} U$ as a subgroupoid, the vertex sets
$\sfG^{\alpha;\ol N} U$ and $\sfG^{\alpha;N} U$ corresponding to $N$-vertices can be identified.

(2)
For $n=1$, the cat $U \times \K^2 \rightrightarrows U \times \K$ (corresponding to the edge $(\emptyset, \{ 1' \})$)
is a direct product of the trivial cat on $U$ with the left action cat of $\K$ on
itself (Part I, Section 3.2). 
For $n=2$, we get a double cat $\sfG^{\sftwo' , \ol \sftwo} U$, with all projections of type ``$\partial$'':
$$
\begin{matrix} U \times \K^4 & \rightrightarrows & U \times \K^3 \cr
\downdownarrows & & \downdownarrows \cr
U \times \K^3 & \rightrightarrows & U \times \K^2 .
\end{matrix}
$$
Here, the target projections are the usual projections on cartesian products, but,
for instance, the two source projections starting at the top left vertex are
$\partial (v_0; s_1,t_1,s_2,t_2) = (v_0; s_1 t_1, s_2,t_2)$ and  $(v_0;s_1,t_1,s_2 t_2)$. 
For general $n$, the claim follows by induction, comparing the iteration procedure defining $\sfG^{\ol N}$ with the one from 
Theorem \ref{th:SA}.
\end{proof}

\begin{theorem}
For saturated vertices $\ol \alpha \subset \ol N$ (so $\alpha \subset N$), vertex sets can be computed by the algorithm described in
Theorem \ref{th:fullcompute}: letting,  for all $j \in N$, 
$$
\langle \alpha , j \rangle  := \Bigr\{
\begin{matrix}
\{ j , \ol j\} & \mbox{ if } & j \in \alpha \cr 
\emptyset & \mbox{ if } & j\notin \alpha,
 \end{matrix}
 $$
we have
$$
\sfG^{\ol \alpha;\ol N} =
\sfG^{\langle \alpha, a_n \rangle, \{ a_n, \ol a_n \} } \circ 
\sfG^{\langle \alpha, a_{n-1} \rangle, \{ a_{n-1}, \ol a_{n-1} \} }  \circ \ldots \circ 
\sfG^{\langle \alpha, a_1 \rangle, \{ a_1, \ol a_1 \} } .
$$
Likewise, the description from Theorem \ref{th:fullcompute1}
applies, with $\alpha$ and all $S_i$ overlined.
\end{theorem}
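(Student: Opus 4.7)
The plan is to mimic the proof of Theorem \ref{th:fullcompute} step by step, simply replacing the first-order functor $\sfG^\sett{a_k}$ by its saturated companion $\sfG^\settt{a_k}$ throughout, and using Theorem \ref{th:Full2} as the engine of the induction. I would proceed by induction on $n=|N|$. For the base case $n=1$, Definition \ref{def:G2} records the saturated vertices directly: when $\alpha=\emptyset$ one has $\sfG^{\emptyset;\ol\sfone}U = U\times\K$, which is exactly what the algorithm produces from $\sfG^{\emptyset,\{1,1'\}}U$; and when $\alpha=\{1\}$ one has $\sfG^{\ol\sfone;\ol\sfone}U = U^\settt{1}$, matching $\sfG^{\{1,1'\},\{1,1'\}}U$.

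For the inductive step, let $N'=N\setminus\{a_n\}$ and $\alpha'=\alpha\cap N'$. By Theorem \ref{th:Full2}, $\sfG^{\ol N}U = \sfG^\settt{a_n}(\sfG^{\ol{N'}}U)$. Precisely because $\ol\alpha$ is saturated, the topmost step of the recursion falls into one of two clean cases:
\begin{itemize}
\item if $a_n\in\alpha$, then both $a_n$ and $\ol a_n$ belong to $\ol\alpha$, so the $\ol\alpha$-vertex is obtained by ``fully deriving'' the $\ol{\alpha'}$-vertex: it equals $\sfG^{\{a_n,\ol a_n\},\{a_n,\ol a_n\}}\bigl(\sfG^{\ol{\alpha'};\ol{N'}}U\bigr)$;
\item if $a_n\notin\alpha$, then neither $a_n$ nor $\ol a_n$ lies in $\ol\alpha$, so the $\ol\alpha$-vertex is the pullback of the $\ol{\alpha'}$-vertex with the bottom vertex of $\sfG^\settt{a_n}0$, which is $\sfG^{\emptyset,\{a_n,\ol a_n\}}$ applied to the previous vertex.
\end{itemize}
In both cases the induction hypothesis, applied to $\ol{\alpha'}\subset\ol{N'}$, produces the formula of the theorem with one additional factor on the left, which is precisely the asserted composition. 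The saturation hypothesis is essential because it forbids the two mixed configurations ($a_n\in\ol\alpha$ but $\ol a_n\notin\ol\alpha$, or conversely), whose vertex sets are strictly more delicate pullbacks that do not fit the algorithm of Theorem \ref{th:fullcompute}.

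For the second claim, that the $\alpha$-stair description of Theorem \ref{th:fullcompute1} carries over, I would simply unfold the composition just proved: each factor $\sfG^{\{a_k,\ol a_k\},\{a_k,\ol a_k\}}$, respectively $\sfG^{\emptyset,\{a_k,\ol a_k\}}$, contributes to the vertex set exactly the same pullback pattern as in the unsaturated case but with the index $a_k$ replaced by the pair $\{a_k,\ol a_k\}$, hence with each $S_i$ and $\alpha$ overlined. Since the stair combinatorics depends only on which steps ``derive'' and which ``take a product'', and not on the internal structure of the first-order functor being applied, the proof of Theorem \ref{th:fullcompute1} transposes verbatim.

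The main obstacle is bookkeeping the pullbacks: the intermediate vertex sets are themselves iterated pullbacks over the scaleoid vertices, and one must check that $\sfG^\settt{a_n}$, applied to such a pullback, produces again a pullback of the expected shape, decorated by $\settt{a_n}$ instead of $\sett{a_n}$. This is the $2n$-fold analogue of the pullback identity (\ref{eqn:pb!}) invoked in the proof of Theorem \ref{th:fullcompute}, and follows from the general compatibility of $\sfG^\settt{k}$ with pullbacks proved in Theorem \ref{la:pullback}; no new idea is required, only careful notation.
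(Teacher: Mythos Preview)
Your proposal is correct and follows exactly the approach of the paper's own proof, which consists of the single sentence ``One uses the same arguments, where now the cat-rule $\sfG^\sett{k}$ is replaced by the doublecat-rule $\sfG^\settt{k}$.'' You have simply unfolded this remark into an explicit induction, correctly identifying why saturation is needed (to avoid the mixed cases where only one of $a_n,\ol a_n$ lies in the vertex) and correctly invoking Theorem~\ref{la:pullback} for the pullback compatibility of $\sfG^\settt{k}$.
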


\begin{proof}
One uses the same arguments, where now the cat-rule $\sfG^\sett{k}$ is replaced by the doublecat-rule $\sfG^\settt{k}$.
\end{proof}

For generic vertices, the description of the vertex sets is more involved. 
Note also that, in contrast to the $\ast$-laws, the description of the interated $\bullet$-laws in affine coordinates will become as
complicated as the one of target projections, for the same reason: 
for $n=1$, $\bullet$ is bilinear, hence by deriving (applying copies of $\sfG^\sett{k}$), we get polynomial maps of higher and higher degree.
%(Only in case of the $N'$-vertices there is no derivation, whence the simple formulae in Theorem \ref{th:twotyped} (2).)
In a certain sense, it seems that the $2n$-fold cat rule $\sfG^{\ol \sfn}$ is ``assembled'' from the two $n$-fold cat rules
$\sfG^\sfn$ and $\sfA^{\sfn'}$, but at present it is not yet clear to me what kind of ``$n$-dimensional assembly construction'' is at work here.

% The triple category Scaleoid
%$\begin{smallmatrix}0^\settt{1,2} & \stackrel{\partial}{ \rightrightarrows }& 0^\sett{1,2} \cr \pi \downdownarrows { \, \, \, } & & \downdownarrows  \cr 0^\settt{1} \times \K^2 & {\rightrightarrows} & 0^\sett{1}\times \K \end{smallmatrix}$ 

%%%%%%%%%%%%
\appendix 
%%%%%%%%%%%%

% A. Hypercubes
% B. n-fold cats
% C. pair groupoids
% D. action cats
% E. Pullbacks

%%%%%%%%%%%%%
\section{Natural hypercubes}\label{sec:hypercube}\label{app:hypercube}\label{App:hypercubes}
%%%%%%%%%%%%

\subsection{Hypercubes}
Let $N$ be a finite set of cardinality $n:= \vert N \vert  \in \N$.  
By definition, the {\em hypercube of type $N$} is given by its {\em vertex set}, the power set $\cP(N)$, together with its natural
{\em partial order} given by inclusion.
The vertex $\emptyset$ is called the {\em bottom vertex}, and $N$ is called the {\em top vertex}.
An {\em (oriented) edge of $\cP(N)$} is a pair $(\beta,\alpha)$ of vertices such that $\alpha$ contains exactly one element more
than $\beta$, so
$\beta \subset \alpha$ and 
 $\vert \alpha \setminus \beta \vert = 1$.
 If $\alpha = \beta \cup \{ i \}$, we also say that $(\beta,\alpha)$ is {\em the edge from the vertex $\beta$ in direction $i$}.
More generally, for $k\leq n$, a {\em $k$-cube in $\cP(N)$} is given by a pair $(\beta,\alpha)$ of vertices such that
$\beta \subset \alpha$ and $\vert \alpha \setminus \beta \vert = k$.
For $k=2$, it is also called a {\em face}, for $k=3$ a {\em cube} and for $k=4$ a {\em tesseract}.

\subsection{Boolean ring}
Recall that $\cP(N)$ with the two operations
\begin{equation}
\alpha \cdot \beta = \alpha \cap \beta, \qquad
\alpha + \beta = (\alpha \cup \beta) \setminus (\alpha \cap \beta)
\end{equation}
forms a commutative ring such that $\alpha + \alpha = \emptyset$ and $\alpha \cdot \alpha =\alpha$ for all $\alpha$
(Boolean ring).  
It also carries a metric defined by $d(\alpha,\beta) = \vert \alpha + \beta \vert$.

\subsection{The natural $n$-hypercube}
By definition, the {\em natural $n$-hypercube} is the hybercube $\cP(\sfn)=\cP(\{ 1,\ldots, n \})$.
It has, as additional feature to the ones described above, a {\em total order on the vertex set}, given by
{\em lexicographic order} denoted by $\leq$:
\begin{equation}
\cP(\sfn) = \Bigl( \emptyset , \{ 1 \},\{ 2 \}, \{ 1,2 \}, \{ 3\}, \{ 1,3\}, \ldots , \sfn \Bigr)
\end{equation}
(which is the natural order of the ``binary codes'').
This total order is compatible with the partial order: $\beta \subset \alpha \Rightarrow \beta \leq \alpha$. 
More generally, such total order exists whenever $N$ is a finite subset of $\N$, with its order induced by the natural order of $\N$.
We then use the notation
$(\alpha;N)$ for vertices and $(\beta,\alpha;N)$ for oriented edges of $\cP(N)$.
When writing 
$\alpha = \{ \alpha_1,\ldots, \alpha_\ell \}$, we  assume that
$\alpha_1 < \ldots < \alpha_\ell$, and the cardinality
$\ell = \ell_\alpha$ will also be called the {\em length of $\alpha$}.

\begin{remark}
By elementary combinatorics it is seen that the number of $k$-cubes in an $n$-cube is ${n \choose k} 2^{n-k}$.
Thus there are $2^n$ vertices, $n \, 2^{n-1}$ edges and $n(n-1) 2^{n-3}$ faces. 
For instance, 
a tesseract has $16$ vertices, $32$ edges, $24$ faces, and $8$ cubes:

% tesseract2.ggb
\begin{center}
\psset{xunit=0.8cm,yunit=0.8cm,algebraic=true,dotstyle=o,dotsize=3pt 0,linewidth=0.8pt,arrowsize=3pt 2,arrowinset=0.25}
\begin{pspicture*}(-0.92,-3.74)(19.84,4.06)
\psline[linecolor=lightgray](14.42,3.48)(3.36,1.54)
\psline(3.36,1.54)(4.94,0.86)
\psline(4.94,0.86)(3.1,-0.04)
\psline(3.1,-0.04)(3.14,-3.22)
\psline(3.36,1.54)(1.52,0.64)
\psline(1.52,0.64)(3.1,-0.04)
\psline[linecolor=lightgray](1.52,0.64)(12.58,2.58)
\psline(14.42,3.48)(12.58,2.58)
\psline(1.52,0.64)(1.56,-2.54)
\psline(1.56,-2.54)(3.14,-3.22)
\psline(14.42,3.48)(16,2.8)
\psline(16,2.8)(14.16,1.9)
\psline[linecolor=lightgray](3.1,-0.04)(14.16,1.9)
\psline(12.58,2.58)(14.16,1.9)
\psline[linecolor=lightgray](16,2.8)(4.94,0.86)
\psline(12.58,2.58)(12.62,-0.6)
\psline(14.16,1.9)(14.2,-1.28)
\psline(12.62,-0.6)(14.2,-1.28)
\psline[linecolor=lightgray](12.62,-0.6)(1.56,-2.54)
\psline[linecolor=lightgray](3.14,-3.22)(14.2,-1.28)
\psline(14.42,3.48)(14.46,0.3)
\psline(14.46,0.3)(12.62,-0.6)
\psline(16,2.8)(16.04,-0.38)
\psline(16.04,-0.38)(14.2,-1.28)
\psline(16.04,-0.38)(14.46,0.3)
\psline[linecolor=lightgray](3.4,-1.64)(14.46,0.3)
\psline(3.4,-1.64)(4.98,-2.32)
\psline[linecolor=lightgray](4.98,-2.32)(16.04,-0.38)
\psline(4.98,-2.32)(3.14,-3.22)
\psline(3.4,-1.64)(1.56,-2.54)
\psline(4.94,0.86)(4.98,-2.32)
\psline(3.36,1.54)(3.4,-1.64)
\rput[tl](3.08,-3.32){$ \emptyset $}
\rput[tl](0.92,-2.4){$ 1 $}
\rput[tl](5.28,-2.34){2}
\rput[tl](2.66,-0.08){3}
\rput[tl](0.84,0.86){13}
\rput[tl](5.06,1.3){23}
\rput[tl](3.08,2.14){123}
\rput[tl](13.76,-1.5){4}
\rput[tl](16.4,-0.44){24}
\rput[tl](16.32,2.96){234}
\rput[tl](14.56,4.06){1234}
\rput[tl](11.88,-0.62){14}
\rput[tl](3.66,-1.26){12}
\rput[tl](14.7,0.7){124}
\rput[tl](11.58,2.46){134}
\rput[tl](13.44,1.78){34}
\begin{scriptsize}
\psdots[dotstyle=*,linecolor=blue](14.42,3.48)
\psdots[dotstyle=*,linecolor=blue](3.36,1.54)
\psdots[dotstyle=*,linecolor=blue](4.94,0.86)
\psdots[dotstyle=*,linecolor=blue](3.1,-0.04)
\psdots[dotstyle=*,linecolor=blue](3.14,-3.22)
\psdots[dotstyle=*,linecolor=darkgray](1.52,0.64)
\psdots[dotstyle=*,linecolor=darkgray](12.58,2.58)
\psdots[dotstyle=*,linecolor=darkgray](1.56,-2.54)
\psdots[dotstyle=*,linecolor=darkgray](14.16,1.9)
\psdots[dotstyle=*,linecolor=darkgray](3.36,1.54)
\psdots[dotstyle=*,linecolor=darkgray](16,2.8)
\psdots[dotstyle=*,linecolor=darkgray](12.62,-0.6)
\psdots[dotstyle=*,linecolor=darkgray](14.2,-1.28)
\psdots[dotstyle=*,linecolor=darkgray](14.46,0.3)
\psdots[dotstyle=*,linecolor=darkgray](16.04,-0.38)
\psdots[dotstyle=*,linecolor=darkgray](3.4,-1.64)
\psdots[dotstyle=*,linecolor=darkgray](4.98,-2.32)
\end{scriptsize}
\end{pspicture*}
\end{center}

\nin
In the figure, vertices are labelled by $ij$, to abbreviate  $\{i,j\}$, etc.
\end{remark}

\subsection{A way of talking}\label{subsec:talking}
For induction procedures, a useful way of talking is

\begin{definition}
A vertex $\alpha$ of the $n+1$-hypercube is called
\begin{enumerate}
\item
an {\em old vertex} if it belongs to $\cP(\sfn)$, i.e., if $n+1 \notin \alpha$,
\item
a {\em new vertex} if $n+1 \in \alpha$.
\end{enumerate}
We distinguish three kinds of edges of $\cP(\mathsf{n+1})$:
an (oriented) edge  $(\beta,\alpha)$ is called
\begin{enumerate}
\item
an {\em old edge}  if both $\alpha$ and $\beta$ are old vertices;
\item
 a {\em copy of an old edge}  if both $\alpha$ and $\beta$ are new vertices (namely, it is  a copy of the ``old'' edge
$(\alpha \setminus \{ n+1\}, \beta \setminus \{ n+1 \}$)), and
\item
  a {\em new edge}  if 
$\beta$ is an old vertex and $\alpha$ is a new one (so that $\alpha = \beta \cup \{ n+1 \}$).
\end{enumerate}
Similarly, we distinguish three types of faces of $\cP(\mathsf{n+1})$: ``old'' ones, ``copies of old'' ones, and ``new'' ones. 
\end{definition}

\nin The  figure of a tesseract given above illustrates this for $n=3$: 
the ``old'' cube is the lower right cube $\cP(\sfthree)$;
edges are represented by segments joining the corresponding vertices (but the figure is just meant to represent the combinatorical
structure): new edges are represented by grey diagonal segments; old eges are
edges of the lower left cube and copies of old edges, of the upper right cube.

\subsection{The two-typed hypercube} This is another way of seeing the $2n$-hypercube.
for $n=2$ (tesseract), it is illustrated like this:

% tesseract3.ggb
\begin{center}
\newrgbcolor{aqaqaq}{0.63 0.63 0.63}
\newrgbcolor{wqwqwq}{0.38 0.38 0.38}
\psset{xunit=0.9cm,yunit=0.9cm,algebraic=true,dotstyle=o,dotsize=3pt 0,linewidth=0.8pt,arrowsize=3pt 2,arrowinset=0.25}
\begin{pspicture*}(0.50,-3.87)(17.9,4.21)
\pspolygon[linecolor=aqaqaq,fillcolor=aqaqaq,fillstyle=solid,opacity=0.18](1.3,-3.57)(5.5,1.24)(13.59,3.05)(9.39,-1.76)
\pspolygon[linecolor=wqwqwq,fillcolor=wqwqwq,fillstyle=solid,opacity=0.17](1.3,-3.57)(1.26,-2.45)(2.66,-2.43)(2.7,-3.55)
\psline[linecolor=lightgray](13.56,4.16)(5.47,2.36)
\psline(5.47,2.36)(6.87,2.37)
\psline[linecolor=lightgray](6.87,2.37)(2.66,-2.43)
\psline(2.66,-2.43)(2.7,-3.55)
\psline[linecolor=lightgray](5.47,2.36)(1.26,-2.45)
\psline(1.26,-2.45)(2.66,-2.43)
\psline[linecolor=lightgray](1.26,-2.45)(9.35,-0.64)
\psline[linecolor=lightgray](13.56,4.16)(9.35,-0.64)
\psline(1.26,-2.45)(1.3,-3.57)
\psline(1.3,-3.57)(2.7,-3.55)
\psline(13.56,4.16)(14.96,4.18)
\psline[linecolor=lightgray](14.96,4.18)(10.75,-0.62)
\psline[linecolor=lightgray](2.66,-2.43)(10.75,-0.62)
\psline(9.35,-0.64)(10.75,-0.62)
\psline[linecolor=lightgray](14.96,4.18)(6.87,2.37)
\psline(9.35,-0.64)(9.39,-1.76)
\psline(10.75,-0.62)(10.79,-1.74)
\psline(9.39,-1.76)(10.79,-1.74)
\psline[linecolor=lightgray](9.39,-1.76)(1.3,-3.57)
\psline[linecolor=lightgray](2.7,-3.55)(10.79,-1.74)
\psline(13.56,4.16)(13.59,3.05)
\psline[linecolor=lightgray](13.59,3.05)(9.39,-1.76)
\psline(14.96,4.18)(14.99,3.07)
\psline[linecolor=lightgray](14.99,3.07)(10.79,-1.74)
\psline(14.99,3.07)(13.59,3.05)
\psline[linecolor=lightgray](5.5,1.24)(13.59,3.05)
\psline(5.5,1.24)(6.9,1.26)
\psline[linecolor=lightgray](6.9,1.26)(14.99,3.07)
\psline[linecolor=lightgray](6.9,1.26)(2.7,-3.55)
\psline[linecolor=lightgray](5.5,1.24)(1.3,-3.57)
\psline(6.87,2.37)(6.9,1.26)
\psline(5.47,2.36)(5.5,1.24)
\rput[tl](0.82,-3.16){$ \emptyset $}
\rput[tl](0.75,-2.17){$1 $}
\rput[tl](3,-3.37){2}
\rput[tl](4.97,1.42){$1'$}
\rput[tl](4.76,2.41){$11'$}
\rput[tl](7.24,1.45){$21'$}
\rput[tl](7.28,2.5){$121'$}
\rput[tl](11.13,-1.55){$22'$}
\rput[tl](8.73,-1.58){$2'$}
\rput[tl](15.37,3.22){$21'2'$}
\rput[tl](15.31,4.24){$121'2'$}
\rput[tl](8.59,-0.52){$12'$}
\rput[tl](2.97,-2.36){$12$}
\rput[tl](11.07,-0.48){$122'$}
\rput[tl](12.7,3.21){$1'2'$}
\rput[tl](12.39,4.24){$11'2'$}
\psline[linecolor=aqaqaq](1.3,-3.57)(5.5,1.24)
\psline[linecolor=aqaqaq](5.5,1.24)(13.59,3.05)
\psline[linecolor=aqaqaq](13.59,3.05)(9.39,-1.76)
\psline[linecolor=aqaqaq](9.39,-1.76)(1.3,-3.57)
\psline[linecolor=wqwqwq](1.3,-3.57)(1.26,-2.45)
\psline[linecolor=wqwqwq](1.26,-2.45)(2.66,-2.43)
\psline[linecolor=wqwqwq](2.66,-2.43)(2.7,-3.55)
\psline[linecolor=wqwqwq](2.7,-3.55)(1.3,-3.57)
\begin{scriptsize}
\psdots[dotstyle=*,linecolor=blue](13.56,4.16)
\psdots[dotstyle=*,linecolor=blue](5.47,2.36)
\psdots[dotstyle=*,linecolor=blue](6.87,2.37)
\psdots[dotstyle=*,linecolor=blue](2.66,-2.43)
\psdots[dotstyle=*,linecolor=blue](2.7,-3.55)
\psdots[dotstyle=*,linecolor=darkgray](1.26,-2.45)
\psdots[dotstyle=*,linecolor=darkgray](9.35,-0.64)
\psdots[dotstyle=*,linecolor=darkgray](1.3,-3.57)
\psdots[dotstyle=*,linecolor=darkgray](10.75,-0.62)
\psdots[dotstyle=*,linecolor=darkgray](5.47,2.36)
\psdots[dotstyle=*,linecolor=darkgray](14.96,4.18)
\psdots[dotstyle=*,linecolor=darkgray](9.39,-1.76)
\psdots[dotstyle=*,linecolor=darkgray](10.79,-1.74)
\psdots[dotstyle=*,linecolor=darkgray](13.59,3.05)
\psdots[dotstyle=*,linecolor=darkgray](14.99,3.07)
\psdots[dotstyle=*,linecolor=darkgray](5.5,1.24)
\psdots[dotstyle=*,linecolor=darkgray](6.9,1.26)
\end{scriptsize}
\end{pspicture*}
\end{center}

\nin For any finite subset $N \subset \N$, let
$N':= \{ k' \mid k \in N \}$ be another copy of $N$, and let
\begin{equation}
\cP(\ol N) := \cP(N \cup N') 
\end{equation} 
be the $2n$-hypercube defined by $\ol N:= N \cup N'$.
We distinguish several types of vertices, edges and faces:

\begin{definition}\label{def:vertextype}
Let $\alpha \subset N$, with preceding notation. A vertex of the form $(\alpha,\ol N)$ is called an {\em $N$-vertex}, 
a vertex of the form $(\alpha' , \ol N)$ an {\em $N'$-vertex}, and a vertex of the form
$(\ol \alpha,\ol N)$ is called a {\em saturated vertex}. A vertex that is of neither of these forms will be called {\em generic}.
\end{definition}

\begin{definition}\label{def:edgetype}
An edge $(\beta,\alpha) \in \cP(\ol N)$ will be called {\em of first kind} if the unique element
of $\alpha \setminus \beta$ belongs to $N$, and {\em of second kind} if it belongs to $N'$.
We distinguish three kinds of faces, namely faces such that

\ssk
(a) all of its edges are of first kind,

(b) all of its edges are of second kind,

(c) it has two edges of the first and two of the second kind.
\end{definition}

\nin
In the preceding illustration of a tesseract ($N = \sftwo = \{ 1,2 \}$),  edges of the first kind are horizontal or vertical, and edges of the
second kind are ``diagonal'' (grey). Vertices are labelled with the same convention as before. 
There are 16 edges of first kind and 16 edges of second kind, and
4 faces of type (a), 4 of type (b), and 16 of type (c). 
Moreover, we have indicated in grey the face formed by the $\sftwo$-vertices and the face formed by the $\sftwo'$-vertices.

\section{$n$-fold categories and groupoids}\label{App:nfoldCats}\label{app:nfoldcats}

\subsection{The concrete category of small $n$-fold cats}
As in Appendices A and C of Part I, the term {\em small (double) category} is abbreviated by ``small (double) cat'', and
{\em concrete category} by ccat. Ccats  are denoted by underlined roman letters. In this appendix, we
define {\em (strict) small $n$-fold categories} by induction (following the pattern from general category theory, see, e.g.,
the corresponding 
\href{http://ncatlab.org/nlab/show/n-fold+category}{pages in the $n$-lab}), 
and then establish a more direct and algebraic description (Theorem \ref{th:nfoldcat}):

\begin{definition}
For $n=1$, a small $1$-fold cat is a small cat, and this defines the concrete category $\ul{\rm Cat}_1$ of small cats. 
A small $n$-fold cat is a small cat $C$ of type $\ul{\rm Cat}_{n-1}$, and a morphism of small $n$-fold cats
is a functor $f:C \to C'$ which is also a morphism with respect to the $\ul{\rm Cat}_{n-1}$-structures on $C$ and $C'$.
This defines a concrete category $\ul{\rm Cat}_n$.
In the same way, the concrete category $\ul{\rm Goid}_n$ {\em of $n$-fold groupoids} is defined.
\end{definition}

\begin{theorem}\label{th:nfoldcat}
A small $n$-fold cat $C$ is equivalent to the following data: 
\begin{enumerate}
\item
a family of sets $(C_\alpha)_{\alpha\in \cP(\sfn)}$, indexed by vertices of the natural hypercube $\cP(\sfn)$,
\item
a structure of small cat $(\pi^e:C_\alpha \downdownarrows C_\beta, \ast_e,z_e)$, called {\em edge-cat}, 
for every (oriented) edge $e=(\beta,\alpha ; \sfn)$ of the hypercube,
\end{enumerate}
such that the following {\em face condition} holds:
for every face $(\gamma,\alpha;\sfn)$ of the hypercube, the edge categories form a small double cat:
\begin{equation*}
\begin{matrix}
\alpha & \to & \beta \cr \downarrow & & \downarrow \cr
\delta  & \to & \gamma  \end{matrix} \qquad \mbox{ gives a small double cat } \qquad
\begin{matrix}
C_\alpha & \rightrightarrows & C_\beta \cr \downdownarrows & & \downdownarrows \cr
C_{\delta} & \rightrightarrows  & C_\gamma  \end{matrix}
\end{equation*}
To shorten notation, we often write $(C_\alpha)_{\alpha \in \cP( \sfn)}$ to denote the whole structure.
{\em Morphisms of small $n$-fold cats} are equivalent to families of maps $f_{\alpha ;\sfn}: C_{\alpha;\sfn} \to C_{\alpha;\sfn}'$
such that, for each edge $(\beta,\alpha)$, the pair
$(f_\beta,f_\alpha)$, is a  morphism of small cats.
% If all edge categories are groupoids, then these data define an {\em $n$-fold groupoid}.
\end{theorem}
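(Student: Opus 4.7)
The plan is to argue by induction on $n$, unpacking at each step the definition of $\ul{\rm Cat}_n$ as ``cat internal to $\ul{\rm Cat}_{n-1}$'' and matching the resulting data against the combinatorics of the natural $n$-hypercube described in Appendix \ref{App:hypercubes}, in particular the decomposition of $\cP(\sfn)$ into ``old'', ``copy of old'' and ``new'' vertices, edges, and faces (Subsection \ref{subsec:talking}).

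For the base case $n=1$, the natural $1$-hypercube has two vertices $\emptyset,\sfone$ and the single edge $(\emptyset,\sfone)$, and no faces; thus the data (1)--(2) with vacuous face condition is precisely a small cat $C_\sfone \downdownarrows C_\emptyset$, which is exactly an object of $\ul{\rm Cat}_1$. For the inductive step, suppose the theorem holds at rank $n-1$, and let $C$ be a small $n$-fold cat, so $C$ is by definition a small cat
\[
C^{(1)} \downdownarrows C^{(0)}
\]
internal to $\ul{\rm Cat}_{n-1}$. Applying the induction hypothesis separately to $C^{(0)}$ and $C^{(1)}$, we get families $(C^{(0)}_\alpha)_{\alpha \in \cP(\mathsf{n-1})}$ and $(C^{(1)}_\alpha)_{\alpha \in \cP(\mathsf{n-1})}$ together with edge-cat and face data at rank $n-1$. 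Using the bijection between $\cP(\sfn)$ and $\cP(\mathsf{n-1})\sqcup\cP(\mathsf{n-1})$ that sends an ``old'' vertex $\alpha$ to $\alpha \in C^{(0)}$ and a ``new'' vertex $\alpha \cup \{n\}$ to $\alpha \in C^{(1)}$, I would set
\[
C_{\alpha;\sfn} := C^{(0)}_\alpha \quad\text{for $n \notin \alpha$,}\qquad
C_{\alpha;\sfn} := C^{(1)}_{\alpha\setminus\{n\}} \quad\text{for $n \in \alpha$.}
\]
For edges, old and copy-of-old edges inherit their edge-cat structure from the $(n-1)$-rank structures on $C^{(0)}$ resp.\ $C^{(1)}$, while each new edge $(\beta,\beta\cup\{n\})$ receives its edge-cat structure from the object/morphism cat $C^{(1)}_\beta \downdownarrows C^{(0)}_\beta$ obtained by evaluating the projections, unit and composition of $C^{(1)} \downdownarrows C^{(0)}$ at the vertex $\beta$; this evaluation is well defined precisely because those structure maps are morphisms in $\ul{\rm Cat}_{n-1}$.

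The key point is to verify the face condition for each of the three types of faces of $\cP(\sfn)$. Old faces lie entirely in $\cP(\mathsf{n-1})$ inside $C^{(0)}$ and are handled by the inductive hypothesis; copy-of-old faces are handled the same way inside $C^{(1)}$. The only substantive check is for new faces, which are of the form
\[
\begin{matrix}
\beta\cup\{n\} & \to & \beta'\cup\{n\} \cr
\downarrow & & \downarrow \cr
\beta & \to & \beta'
\end{matrix}
\]
with $\beta' \subset \beta$, $|\beta\setminus\beta'|=1$. The two horizontal edges carry the edge-cat structure of $C^{(1)}$ resp.\ $C^{(0)}$ at the edge $(\beta',\beta)$, and the two vertical edges carry the projection/unit/composition structure of $C^{(1)} \downdownarrows C^{(0)}$ evaluated at the vertices $\beta',\beta$. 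That the resulting square is a small double cat is exactly the statement that the horizontal structure maps (projections, units, composition of the edge-cat of $C^{(0)}$ and $C^{(1)}$ at $(\beta',\beta)$) are morphisms in $\ul{\rm Cat}_1$ with respect to the vertical cat structures; and this holds because those horizontal structure maps are, by the inductive description of $C^{(0)},C^{(1)}$ as $(n-1)$-fold cats, morphisms in $\ul{\rm Cat}_{n-1}$, which in particular send the projections/units/composition of $C^{(1)} \downdownarrows C^{(0)}$ at $\beta$ to those at $\beta'$.

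For the converse, starting with data (1)--(2) satisfying the face condition, I reverse the construction: the new edges assemble into a cat $C^{(1)} \downdownarrows C^{(0)}$, the old edges and their faces equip $C^{(0)}$ with an $(n-1)$-fold cat structure (by induction), the copy-of-old edges do the same for $C^{(1)}$, and the new-face condition is precisely the statement that projections, unit and composition of $C^{(1)} \downdownarrows C^{(0)}$ are morphisms in $\ul{\rm Cat}_{n-1}$, giving an object of $\ul{\rm Cat}_n$. The morphism statement is proved analogously: a functor $C \to C'$ in $\ul{\rm Cat}_n$ restricts to functors $C^{(0)} \to {C'}^{(0)}$ and $C^{(1)} \to {C'}^{(1)}$ in $\ul{\rm Cat}_{n-1}$ which commute with the new edges' projections, units and composition; the induction hypothesis unpacks each into a family $(f_{\alpha;\mathsf{n-1}})$ respecting edge-cats, and assembling the two families gives $(f_{\alpha;\sfn})$ as required. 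The main obstacle I anticipate is purely notational: keeping the threefold old/copy-of-old/new bookkeeping tidy when checking that every face condition at rank $n$ reduces to either a face condition at rank $n-1$ or to the $\ul{\rm Cat}_{n-1}$-morphism property of the cat structure maps of $C^{(1)} \downdownarrows C^{(0)}$.
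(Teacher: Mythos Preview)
Your proposal is correct and takes essentially the same approach as the paper's proof: induction on $n$, splitting vertices, edges, and faces of $\cP(\sfn)$ into old, copy-of-old, and new, and identifying the new-face condition with the requirement that the structure maps of the internal cat $C^{(1)} \downdownarrows C^{(0)}$ be $\ul{\rm Cat}_{n-1}$-morphisms. One sentence in your forward-direction new-face check has the roles reversed --- it is the \emph{vertical} structure maps (projections, unit, composition of $C^{(1)} \downdownarrows C^{(0)}$) that are $\ul{\rm Cat}_{n-1}$-morphisms and hence respect the horizontal edge-cat at $(\beta',\beta)$, not the other way round --- but you phrase this correctly in the converse direction, and the argument is sound.
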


\begin{proof}
By induction on $n$.
For $n=1$, the equivalence is trivial (and the face condition is empty), and for  $n=2$, 
the equivalence is also trivial (since there is just one face, it says that, by definition, 
a $2$-fold cat is the same as a double cat. Note that in  Theorem % \ref{th:smalldoublecats}.
C.1  from Part I an ``explicit'' description of double cats has been given). Assume now that the claim holds for some $n \in \N$.

\ssk
Assume $C$ is a small $n+1$-fold cat and let us show that it gives rise to a structure as described in the theorem. 
By assumption,  $C = C_0 \dot\cup C_1$, and $C_0,C_1$ are small $n$-fold cats. 
By induction, 
$C_0 = (C_\alpha)_{\alpha \in \cP(\mathsf{n})}$ and
$C_1 = (D_\alpha)_{\alpha \in \cP(\mathsf{n})}$ are families of sets, projections and products having the properties 
listed in the theorem.
Moreover, $\pi_\sigma : C_1 \rightrightarrows C_0$ defines two morphisms of $n$-fold cats, hence, again by induction,
is given by two families $(\pi^{\alpha}_\sigma : D_\alpha \rightrightarrows C_\alpha)_{\alpha \in \cP(\mathsf{n})}$
which for each $\alpha$ define a morphism of small cats. 
Letting for each ${\alpha \in \cP(\mathsf{n})}$,
$$
C_{\alpha \cup \{ n+1 \}} := D_\alpha, \qquad
\pi^{\alpha,\alpha \cup \{ n +1 \};\mathsf{n+1}} := \pi^\alpha :  D_\alpha = C_{\alpha \cup \{ n +1 \}}  \rightrightarrows C_\alpha,
$$
and by distinguishing several cases (the three cases of ``old'', ``copies of old'' and ``new'' edges, and similar for faces,
see section  \ref{subsec:talking}),
one checks that the data $(C_\alpha)_{\alpha \in \cP(\mathsf{n+1})}$ satisfy the properties from the theorem.

\ssk
Conversely, assume data $(C_\alpha)_{\alpha \in \cP(\mathsf{n+1})}$ with properties as in the theorem are given.
Then $C_0:=  (C_\alpha)_{\alpha \in \cP(\mathsf{n})}$ and
$C_1 := (C_\alpha)_{\alpha \in \cP(\mathsf{n+1})\setminus \cP(\mathsf{n})}$ 
both satisfy the induction hypothesis, hence define small $n$-fold cats.
The family of projections
$\pi^{\alpha,\alpha \cup \{ n+1 \};\mathsf{n+1}}$ is, by induction, a morphism between these $n$-fold cats, and so is the family
of unit sections. 
To conclude that $(C_0,C_1,\ast)$ is a small cat of type $\ul{\rm Cat}_n$, it only remains to show that  $\ast$ is a morphism
of small $n$-fold cats.
By induction, this is equivalent to showing that $\ast$ defines, on the level of each ``old edge'' $(\gamma,\beta)$ 
coming from  $\cP(\sfn)$, 
a morphism of small cats. This, in turn, is equivalent to saying that the ``new face'' 
$(\gamma, \beta \cup \{ n+1 \})$ defines a small double cat. But this is ensured by the face condition, whence the claim.
\end{proof}

\begin{example}
If $C_1,\ldots,C_n$ are small cats (resp.\ groupoids), then the {\em external pro\-duct}
$C_1 \square \ldots \square C_n$ is an $n$-fold cat (resp.\ $n$-fold groupoid)
(see \cite{FP10}, def.\ 2.9).
\end{example}

\begin{theorem}
For a small $n$-fold cat $C = (C_\alpha)_{\alpha \in \cP(\sfn)}$, the following are equivalent:
\begin{enumerate}
\item
$C$ is an $n$-fold groupoid,
\item
for each edge $(\beta,\alpha;\sfn)$ of $\cP(\sfn)$, the edge cat $(C_\beta,C_\alpha)$ is a groupoid.
\end{enumerate}
\end{theorem}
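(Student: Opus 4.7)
The plan is to proceed by induction on $n$, mirroring the structure of the proof of Theorem \ref{th:nfoldcat}, with the crucial extra ingredient being a ``double-groupoid fact'': a small double cat in which all four edge-cats are groupoids is automatically a small double groupoid, in the sense that the four edge-inverses are morphisms of cats for the other two edge structures. This is a standard consequence of the interchange law (and is used implicitly in Part I, Appendix C); given a face structure, uniqueness of inverses in a groupoid, combined with the compatibility of identities and compositions expressed in the interchange law, forces the inverse along any one edge to commute with the composition along the transverse edge.

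For the base case $n=1$ the equivalence is tautological, since $\cP(\sfone)$ has a single non-trivial edge and by definition an $n$-fold groupoid (for $n=1$) is a groupoid. For the inductive step, assume the theorem holds for $n$ and consider a small $(n+1)$-fold cat $C$. By Theorem \ref{th:nfoldcat}, $C$ decomposes, via the top coordinate, as a pair $(C_0,C_1)$ of small $n$-fold cats
$$C_0 = (C_\alpha)_{\alpha \in \cP(\sfn)}, \qquad C_1 = (C_{\alpha \cup \{n+1\}})_{\alpha \in \cP(\sfn)},$$
together with a small cat structure $C_1 \rightrightarrows C_0$ that is internal to $\ul{\rm Cat}_n$ (the vertex-wise projections, units and compositions indexed by $\alpha \in \cP(\sfn)$ precisely correspond to the ``new'' edges $(\alpha,\alpha \cup \{n+1\})$ of $\cP(\mathsf{n+1})$).

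For the implication $(1) \Rightarrow (2)$, suppose $C$ is an $(n+1)$-fold groupoid, i.e., a groupoid object in $\ul{\rm Goid}_n$. Then $C_0$ and $C_1$ are themselves $n$-fold groupoids, so by induction every edge-cat indexed by an edge of $\cP(\sfn)$ (the ``old'' edges and the ``copies of old'' edges) is a groupoid. Moreover, since $C_1 \rightrightarrows C_0$ is a groupoid internal to $\ul{\rm Goid}_n$, the set-level inverse map has, at each vertex $\alpha$, a component that is a groupoid inverse for the new edge $(\alpha, \alpha \cup \{n+1\})$; so all edge-cats are groupoids.

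For the converse $(2) \Rightarrow (1)$, assume every edge-cat of $C$ is a groupoid. Restricting to the edges inside $\cP(\sfn)$ and applying the inductive hypothesis, $C_0$ and $C_1$ become $n$-fold groupoids. The new edges give, vertex-by-vertex, groupoid inversions $\iota^\alpha \colon C_{\alpha \cup \{n+1\}} \to C_{\alpha \cup \{n+1\}}$; the task is to show that the family $(\iota^\alpha)_\alpha$ defines a morphism in $\ul{\rm Goid}_n$. By induction (again via Theorem \ref{th:nfoldcat}), it suffices to verify compatibility with each edge of $\cP(\sfn)$, i.e., on each face $(\alpha, \alpha \cup \{i, n+1\})$ of $\cP(\mathsf{n+1})$ of ``mixed'' type having a new edge. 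Here the double-groupoid fact mentioned above applies: the face condition produces a small double cat whose four edge-cats are, by assumption, all groupoids; hence the inverse along the new direction commutes with the composition along direction $i$, which is exactly the desired functoriality. Gluing these facts across all faces promotes $\iota$ to a morphism in $\ul{\rm Goid}_n$, turning $C_1 \rightrightarrows C_0$ into a groupoid internal to $\ul{\rm Goid}_n$, i.e., $C$ into an $(n+1)$-fold groupoid.

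The only genuinely non-formal step, and the one I expect to require care, is the double-groupoid fact: packaging the interchange law to conclude that, in a face whose four edge-cats are groupoids, the inverse map in one direction is automatically a functor for the composition in the transverse direction. Once this one-face statement is in hand, the rest of the induction runs in parallel with the proof of Theorem \ref{th:nfoldcat}.
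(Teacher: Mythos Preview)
Your proposal is correct and follows the same inductive approach as the paper, whose entire proof reads ``Straightforward by induction.'' Your version usefully unpacks the one non-formal step---the ``double-groupoid fact'' that in a face whose four edge-cats are groupoids the inverse in one direction is automatically a functor for the transverse composition---which is exactly what makes the induction go through.
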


\begin{proof}
Straightforward by induction. 
\end{proof}

See also \cite{FP10} for a  concise proof of the essentially same results. 
Note that the description given in the theorems does not refer to the total order of $\cP(\sfn)$, but only to its partial order.
Therefore every permutation of $\sfn$ defines again an $n$-fold cat:

\begin{definition} 
Let $\tau \in S_n$ be a permutation of $\sfn$. For a small $n$-fold cat $C=(C_\alpha)_{\alpha \in \cP(\sfn)}$, the
{\em $\tau$-transposed $n$-fold cat} is given by $C^\tau:=(C_{\tau(\alpha)})_{\alpha \in \cP(\sfn)}$.
The $n$-fold cat $(C_\alpha)_{\alpha \in \cP(\sfn)}$ is called {\em edge-symmetric} if, for each $\tau \in S_n$, there is
an isomorphism of $n$-fold cats $f_\tau : C \to C^\tau$ such that
$f_\tau \circ f_\nu = f_{\tau \nu}$ and $f_{\id}=\id_{C}$. In other words, we have an action of $S_n$ by isomorphisms.\footnote{
In work of Brown and Higgins \cite{BH81}, and in \cite{BHS11}, it is always assumed that $n$-fold cats and groupoids
are edge symmetric, and notation is then chosen such that, whenever $\alpha$ and $\alpha'$ are conjugate under $S_n$
(i.e., have same cardinality), $C_\alpha$ and $C_{\alpha'}$ are denoted by the same symbols.}
\end{definition}

\begin{definition}
Let $C = (C_0,C_1,\ast)$ be  a small $n$-fold cat.
Then the opposite cat $C^{opp}$ is again a small $n$-fold cat. Iterating this procedure, 
for each subset $\gamma \subset \sfn$ we get the
{\em $\gamma$-opposite $n$-fold cat} $C^{\gamma{\rm-opp}}$ defined by taking at level $i$ the opposite cat structure if $i \in \gamma$
and the original one else. 
\end{definition}

\begin{definition}\label{def:topdown}
In a small $n$-fold cat,  the {\em $n$-fold source map} is defined by
$$
\xi : = \pi^{\emptyset,\sfone}_0 \circ \pi^{\sfone,\sftwo}_0 \ldots \circ \pi^{\mathsf{n-1},
\sfn}_0  :
 C_\sfn \to C_\emptyset .
$$
More generally, for any subset $\gamma \subset \sfn$, we define the
{\em $\gamma$-top-down projection}
$$
\xi_\gamma : = \pi^{\emptyset,\sfone}_{k_1} \circ \pi^{\sfone,\sftwo}_{k_2} \ldots \circ \pi^{\mathsf{n-1},\sfn}_{k_n}  
 : C_\sfn \to C_\emptyset
$$
where $k_i = 1$ (target) if $i\in \gamma$ and
$k_i=0$ (source) if $i \notin \gamma$. In other words, this is the $n$-fold source map of $C^{\gamma{\rm-opp}}$.
\end{definition}

\subsection{Two-typed $2n$-fold cats}

\begin{lemma}
Let $k \in \{ 1,\ldots, n-1\}$.
The following are equivalent:
\begin{enumerate}
\item
$C$ is a small $n$-fold cat,
\item
$C$ is a small $k$-fold cat of type $\ul{\rm Cat}_{n-k}$.
\end{enumerate}
\end{lemma}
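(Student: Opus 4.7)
\emph{Plan.} I would prove the lemma by induction on $n$, with the content reducing to a trivial associativity of the recursive definition. Recall that $\ul{\rm Cat}_m = \ul{\rm Cat}(\ul{\rm Cat}_{m-1})$, where $\ul{\rm Cat}(\ul D)$ denotes small categories internal to a concrete category $\ul D$, and the same recursive rule makes sense starting from any $\ul D$: a ``$k$-fold cat of type $\ul D$'' is, by iterating the rule $k$ times, an object of $\ul{\rm Cat}^k(\ul D)$. With this reading, the phrase ``$k$-fold cat of type $\ul{\rm Cat}_{n-k}$'' refers to an object of $\ul{\rm Cat}^k(\ul{\rm Cat}_{n-k})$, and the lemma is precisely the identity
$$
\ul{\rm Cat}^n(\ul{\rm Set}) \;=\; \ul{\rm Cat}^k(\ul{\rm Cat}^{n-k}(\ul{\rm Set})),
$$
i.e.\ associativity of iteration of the functor $\ul{\rm Cat}$.

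The base $n=2$ forces $k=1$, and here the equivalence is literally the definition of a small $2$-fold cat as a small cat of type $\ul{\rm Cat}_1$. For the inductive step, fix $n\geq 3$ and assume the lemma at all strictly smaller $n$. The case $k=1$ is again the definition. For $k\geq 2$, a small $k$-fold cat of type $\ul{\rm Cat}_{n-k}$ unpacks, by one application of the recursive rule, as a small internal cat whose object and morphism sets are $(k-1)$-fold cats of type $\ul{\rm Cat}_{n-k}$, with structure maps being morphisms of such. Applying the inductive hypothesis at rank $n-1$ with parameter $k-1\in\{1,\ldots,n-2\}$ (noting $(n-1)-(k-1)=n-k$), the category of $(k-1)$-fold cats of type $\ul{\rm Cat}_{n-k}$ is the same as $\ul{\rm Cat}_{n-1}$. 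Hence the whole datum is a small cat of type $\ul{\rm Cat}_{n-1}$, which by definition is a small $n$-fold cat. The same unpacking shows morphisms correspond, yielding the required equivalence of concrete categories.

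Alternatively, one can argue combinatorially using Theorem~\ref{th:nfoldcat}: partition $\sfn = A\cup B$ with $\vert A\vert = k$, $\vert B\vert = n-k$, identify $\cP(\sfn)\cong\cP(A)\times\cP(B)$, and reorganize the $2^n$ vertex sets $(C_\gamma)_{\gamma\in\cP(\sfn)}$ as a $\cP(A)$-indexed family of $\cP(B)$-indexed families. For fixed $\alpha\in\cP(A)$, the slice $(C_{\alpha\cup\beta})_{\beta\in\cP(B)}$ together with its $\cP(B)$-edge cats and $\cP(B)$-faces forms an $(n-k)$-fold cat; the $\cP(A)$-edge cats then assemble these slices into a $k$-fold cat of $(n-k)$-fold cats.

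The main technical point in this combinatorial route — and the only non-bookkeeping step anywhere in the proof — is to check that the ``mixed'' face conditions of $\cP(\sfn)$ (those faces with one edge in $\cP(A)$ and one edge in $\cP(B)$) translate precisely into the statement that the $\cP(A)$-edge cat structures are morphisms in $\ul{\rm Cat}_{n-k}$. This is the substance of the double-cat face condition and holds by its manifest symmetry in the two edge-directions; once granted, it re-expresses the $n$-fold cat axioms in the form of a $k$-fold cat in $\ul{\rm Cat}_{n-k}$. No surprises are expected, since this combinatorial argument simply externalizes the abstract associativity used in the inductive proof above.
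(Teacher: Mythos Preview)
Your proposal is correct. The paper's own proof is essentially your second (``alternatively'') route: it unpacks (2) as ``$C$ is a small $k$-fold cat and a small $(n-k)$-fold cat such that all structure maps of the $k$-fold cat are morphisms of the $(n-k)$-fold cat structure'', and then invokes the combinatorial description of Theorem~\ref{th:nfoldcat} to identify this with (1). Your primary inductive argument (associativity of iterating $\ul{\rm Cat}$) is equally valid and arguably cleaner at the level of definitions, but the paper prefers to lean on Theorem~\ref{th:nfoldcat} since that theorem has already done the work of matching the recursive and combinatorial pictures; either way the content is the same.
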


\begin{proof}
By definition, (2) means that $C$ is a small $k$-fold cat and a small $n-k$-fold cat such
that all structure maps of the $k$-fold cat are morphisms of the $n-k$-fold cat structure.
Applying the description from the theorem, this amounts to (1). 
\end{proof}

\nin
Note, however, that (2) defines a kind of additional structure on $C$, when the value of $k$ is considered to be fixed. 

\begin{definition}
A {\em two-typed $2n$-fold cat} is a small $2n$-fold cat, together with a fixed structure of small $n$-fold cat of type $\ul{\rm Cat}_n$.
As index set, we then use the two-typed hypercupe $\cP(\ol \sfn)$, and we often denote projections and composition laws
corresponding to edges of the second
kind by the symbols  $\partial$ and $\bullet$, whereas for those corresponding to edges of the first kind we use $\pi$ and $\ast$.
\end{definition}

\subsection{Cat-rules}\label{sec:catconstruction}
The $n$-fold cats considered in this work arise in the following way:

\begin{definition}
Fix some  ccat $\ul{\rm C}$.
We assume that one can form pullbacks in $\ul{\rm C}$ (cf.\ Appendix \ref{app:pullback}).
 A {\em cat-rule $\sfQ$ on $\ul{\rm C}$} 
 is a pullback-preserving functor from $\ul{\rm C}$ to \ul{\rm C-Cat}. That is, we have 
a functor $\sfQ = \sfQ^\sfone = \sfQ^\sett{1}$ from $\ul{\rm C}$ to itself, 
%that is, a rule assigning to each object set $U$ of
%$\ul C$ a set $\sfQ U$ of $\ul C$, and to every morphism $f:U\to U'$ another one 
%$\sfQ f:\sfQ \to \sfQ'$ such that $\sfQ (\id_U)=\id_{\sfQ U}$ and
%$\sfQ (g \circ f) =\sfQ g \circ \sfQ f$, 
such that each set
$\sfQ^\sfone U$ carries a structure of small cat of type $\ul{\rm C}$ (with object set
$\sfQ^{\emptyset;\sfone} U$, morphism set $\sfQ^{\sfone;\sfone} U$ and composition $\ast_1$),  and each
  $\sfQ  f:\sfQ U \to \sfQ U'$ is a morphism of small cats. 
Moreover, we require that   
 $\sfQ$ be compatible with (set-theoretic) pullbacks, i.e.,
\begin{equation}\label{eqn:pullbackQ}
\sfQ (A \times_B C) = \sfQ A \times_{\sfQ B} \sfQ C .
\end{equation}
\end{definition}
  
\begin{theorem}\label{th:catrules}
Assume $\sfQ$ is a cat-rule on $\ul{\rm C}$. Applying  a second copy $\sfQ^\sett{2}$ of $\sfQ^\sett{1}$ to the data of the small cat
$\sfQ^\sfone U$, we get a small double cat
\begin{equation*}
\sfQ^\sett{1,2} U := \sfQ^\sett{2} (\sfQ^\sett{1} U) ,
\end{equation*}
and iterating this procedure, we get a small $n$-fold cat
$\sfQ^\sfn U = \sfQ^\sett{n} (\sfQ^\sett{1,\ldots,n-1} U)$.
%(include also description of vertex sets?)
\end{theorem}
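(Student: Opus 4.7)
The plan is to prove this by induction on $n$, using Theorem \ref{th:nfoldcat} as the working definition: an $n$-fold cat is equivalent to a family of vertex sets indexed by $\cP(\sfn)$, together with small-cat structures on each edge, satisfying the face condition. The hypotheses on $\sfQ$ — functoriality and pullback-preservation — are exactly what is needed to transport small-cat data through $\sfQ$, and their repeated application is what produces higher cat structures.

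For the base case $n=2$, I would apply $\sfQ^\sett{2}$ to each ingredient of the small cat $(\sfQ^{\emptyset;\sfone}U, \sfQ^{\sfone;\sfone}U, \pi_\sigma, z, \ast_1)$. The vertex sets $\sfQ^{\alpha;\sftwo} U := \sfQ^\sett{2}(\sfQ^{\alpha;\sfone} U)$ for $\alpha \in \cP(\sfone)$ each acquire a small cat structure (the $\ast_2$-structure) by the very definition of a cat-rule applied to $\sfQ^\sett{2}$. The images $\sfQ^\sett{2}(\pi_\sigma)$, $\sfQ^\sett{2}(z)$ are morphisms of small cats by functoriality. The critical point is the composition: $\ast_1$ has domain the pullback $\sfQ^{\sfone;\sfone}U \times_{\sfQ^{\emptyset;\sfone}U} \sfQ^{\sfone;\sfone}U$, and here the pullback-preservation rule (\ref{eqn:pullbackQ}) is indispensable — it guarantees that $\sfQ^\sett{2}$ of this pullback agrees with the pullback of the $\sfQ^\sett{2}$-images, so $\sfQ^\sett{2}(\ast_1)$ is a well-defined map of exactly the expected form, and a morphism of small cats by functoriality. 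Thus we obtain a small cat $(\sfQ^{\emptyset;\sftwo}U, \sfQ^{\sfone;\sftwo}U, \ldots, \sfQ^\sett{2}(\ast_1))$ of type $\ul{\rm C}\text{-Cat}$, which is precisely a small double cat.

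Next I would verify the face condition, which for $n=2$ reduces to showing that the two edge-cat structures (the $\ast_1$- and $\ast_2$-structures) are compatible in the sense of Part I, Appendix C. Compatibility amounts to the statement that every structure map of the $\ast_1$-cat is a morphism with respect to the $\ast_2$-structure — but this is automatic, because every such structure map is of the form $\sfQ^\sett{2}(g)$ for some $g$, and the image of $\sfQ^\sett{2}$ lies in $\ul{\rm C}\text{-Cat}$ by definition.

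For the inductive step, assume $\sfQ^\sett{1,\ldots,n-1} U$ is a small $(n-1)$-fold cat, with vertex sets and edge-cats indexed by $\cP(\mathsf{n-1})$ and satisfying all face conditions. Apply $\sfQ^\sett{n}$ to every ingredient: this yields new vertex sets indexed by $\cP(\sfn)$ (the ``old'' vertices $\alpha \subset \mathsf{n-1}$ give $\sfQ^\sett{n}$-images of the old vertex sets, and the ``new'' vertices $\alpha \cup \{n\}$ are manufactured by the cat-rule $\sfQ^\sett{n}$ itself) and new edge-cats (of the three types described in Section \ref{subsec:talking}: old, copy of old, new). The face condition then splits into three cases according to the type of face. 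The old faces satisfy it by inductive hypothesis; the ``copy of old'' faces satisfy it because $\sfQ^\sett{n}$ is a functor into $\ul{\rm C}\text{-Cat}$ and so transports the inductive double-cat structure to another double cat; and the ``new'' faces — those involving a new edge and an old edge — satisfy it precisely because the structure maps of the old edge-cat are morphisms of small cats, hence intertwine the $\ast_n$-structure. The main obstacle in the entire argument is managing the domains of partially-defined compositions: at each level the composition is defined on an iterated pullback, and one must repeatedly invoke (\ref{eqn:pullbackQ}) to know that $\sfQ^\sett{n}$ sends this iterated pullback to the iterated pullback of the images. Once that bookkeeping is in place, everything reduces to functoriality and the face conditions at the previous level, and the induction closes.
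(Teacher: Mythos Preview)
Your proposal is correct and follows essentially the same approach as the paper: functoriality of $\sfQ$ transports the cat data, pullback-preservation (\ref{eqn:pullbackQ}) handles the domains of the composition laws, and compatibility with $\ast_n$ follows because every derived structure map lies in the image of $\sfQ^\sett{n}$. Your inductive step is in fact more explicit than the paper's (which simply writes ``and so on''), and your use of the old/new edge and face terminology from Section~\ref{subsec:talking} to organize the face-condition check is exactly in the spirit of the paper's treatment elsewhere (e.g., Theorem~\ref{th:nfoldcat}).
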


\begin{proof}
Since $\sfQ^\sett{2}$ is a functor, applying it to the small cat structure with composition $\ast_1$ gives another small cat
with ``derived'' composition $\sfQ^\sett{2}(\ast_1)$ and  ``derived'' projections and unit section. 
To make this statement formally correct, we have, however, to pay attention to the domains of definition of composition laws:
composition $\ast_1$ is defined on a domain $D = A \times_B C \subset A \times C$ with 
$A=C=\sfQ^{\sfone;\sfone} U$ and $B=\sfQ^{\emptyset;\sfone} U$, and thus
$\sfQ^\sett{2} (\ast_1)$ is, {\it a priori}, defined on $\sfQ^\sett{2} D$.
But the very definition of a small cat requires that this product be defined on the set 
$\sfQ^\sett{2} A \times_{\sfQ^\sett{2} B} \sfQ^\sett{2} C$ (pullback using projections of
 $\sfQ^\sett{2} (\sfQ^\sett{1} U)$). Therefore, a sufficient condition for the construction to work is that 
  $\sfQ$ be compatible with pullbacks, i.e., (\ref{eqn:pullbackQ}).
 
On the other hand,   $\sfQ^\sett{2} (\sfQ^\sett{1} U)$ arises from applying  $\sfQ^\sett{2}$ to a set, hence  is a small cat with composition $\ast_2$. Since ``derived'' maps $\sfQ^\sett{2} f$ are compatible with such structure,
all structure maps defined above are morphisms of the small cat with composition $\ast_2$, and so
$\sfQ^\sett{1,2}U$ is a small doublecat. 
Applying another copy $\sfQ^\sett{3}$ to this structure, we get a small triplecat, and so on. 
\end{proof}

\begin{example}
Our examples of cat-rules are:
\begin{enumerate}
\item
the {\em pair groupoid rule} $\pG$, defined on $\ul{\rm C} = \ul{\rm Set}$ (Appendix \ref{app:pg}),
\item
the {\em scaled action groupoid rule} $\sfA$, defined on the ccat of $S$-modules for a commutative monoid $S$
(Appendix \ref{app:scaledaction}),
\item
the {\em derived groupoid rule} $\sfG$ (main object of this work), defined on the ccat of linear sets, with morphisms initially the
polynomial laws (but then extended to all $C^\infty$-laws), and pullbacks as in Appendix \ref{app:pullback},
\item
the {\em finite part derived groupoid rule} $\sfGfi$, defined on the ccat of linear sets, with morphisms arbitrary set-maps,
\item 
the {\em symmetric derived groupoid rule} $\sfGsy$, with morphisms as for $\sfG$,
\item
for each $t \in \K$, the rule $\sfG_t$ (in particular, for $t=0$, the {\em tangent functor} $\sfT$).
\end{enumerate}
\end{example}
  
\nin  For all of these examples, we check Property (\ref{eqn:pullbackQ}) ``by hand''
(see Appendix \ref{app:pullback} for the most important case $\sfQ = \sfG$). 
 
\begin{definition}
An {\em $n$-fold cat rule} on a ccat $\ul{\rm C}$ is given by functors, for each subset $N \subset \N$ of cardinality $n$,
$\sfQ^N$, from $\ul{\rm C}$ to the ccat $\ul{\rm C-Cat}_N$ of small $n$-fold cats of type ${\rm C}$, and preserving pullbacks.
\end{definition}

Thus any cat-rule gives rise, by iteration, to an $n$-fold cat rule.
But it is also possible to compose different cat-rules with each other:

\begin{theorem}
Assume $\sfQ$ and $\sfP$ are cat-rules defined on a common ccat $\ul{\rm C}$.
Then $\sfQ \circ \sfP$ and $\sfP \circ \sfQ$ are doublecat rules on $\ul{\rm C}$ (different from each other, in general).
\end{theorem}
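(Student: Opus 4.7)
The plan is to repeat, essentially verbatim, the argument given in the proof of Theorem \ref{th:catrules}, where now the ``outer'' copy of $\sfQ^\sett{2}$ is replaced by an arbitrary cat-rule $\sfP$ (or vice-versa for the other composition). The point is that the proof of Theorem \ref{th:catrules} really uses only three features of the outer cat-rule: that it is a functor on $\ul{\rm C}$, that it produces small cats of type $\ul{\rm C}$, and that it preserves pullbacks in the sense of equation (\ref{eqn:pullbackQ}). All three features are axiomatic for any cat-rule.

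In detail, I would proceed as follows. Starting with $U\in \ul{\rm C}$, the rule $\sfP$ produces a small cat $\sfP U$ with object set $\sfP^{\emptyset;\sfone}U$, morphism set $\sfP^{\sfone;\sfone}U$, projections $\pi^\sfP_\sigma$, unit section $z^\sfP$, and composition $\ast_\sfP$ defined on the pullback $D_\sfP = \sfP^{\sfone;\sfone}U \times_{\sfP^{\emptyset;\sfone}U} \sfP^{\sfone;\sfone}U$. Applying the functor $\sfQ$ to this data yields a pair of objects $\sfQ(\sfP^{\emptyset;\sfone}U)$, $\sfQ(\sfP^{\sfone;\sfone}U)$ in $\ul{\rm C}$, together with $\sfQ(\pi^\sfP_\sigma)$, $\sfQ(z^\sfP)$ and a partial law $\sfQ(\ast_\sfP)$, initially defined on $\sfQ(D_\sfP)$. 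Now invoke pullback-preservation of $\sfQ$, giving
\begin{equation*}
\sfQ(D_\sfP) = \sfQ(\sfP^{\sfone;\sfone}U) \times_{\sfQ(\sfP^{\emptyset;\sfone}U)} \sfQ(\sfP^{\sfone;\sfone}U),
\end{equation*}
which is exactly the set on which a composition law must be defined in order to turn the functor-image of $\sfP U$ into a small cat in $\ul{\rm C}$. Functoriality then forces the cat axioms (associativity, unit laws) to be inherited from those of $\sfP U$.

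At this stage, $\sfQ(\sfP U)$ carries two compatible small cat structures: one with composition $\sfQ(\ast_\sfP)$ (from deriving $\sfP$'s structure through $\sfQ$) and one with composition $\ast_\sfQ$ (from $\sfQ$'s own cat-rule applied to each of the two objects $\sfP^{\emptyset;\sfone}U$ and $\sfP^{\sfone;\sfone}U$). Compatibility -- i.e., that each structure map of the first is a morphism for the second -- is automatic: every structure map of the $\sfQ(\ast_\sfP)$-cat is of the form $\sfQ(g)$ for some morphism $g$ in $\ul{\rm C}$, and all such arrows are morphisms of small cats for the $\ast_\sfQ$-structure by functoriality of $\sfQ$ as a cat-rule. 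This yields the face condition, hence the structure of a small double cat on $\sfQ\circ \sfP(U)$ by Theorem \ref{th:nfoldcat} (case $n=2$). Morphism functoriality $\sfQ(\sfP(f))$ and compatibility with pullbacks of $\sfQ\circ \sfP$ follow by composing the corresponding properties of $\sfQ$ and $\sfP$. The same argument, exchanging the roles of $\sfQ$ and $\sfP$, handles $\sfP \circ \sfQ$.

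The only real step that is not pure bookkeeping is the domain-of-definition issue for $\sfQ(\ast_\sfP)$, handled by pullback-preservation, exactly as in the proof of Theorem \ref{th:catrules}; I would highlight this as the substantive part. The remark that $\sfQ \circ \sfP \neq \sfP \circ \sfQ$ in general then follows by examples (for instance comparing their bottom and top vertices $\sfQ^{\emptyset;\sfone}\sfP^{\emptyset;\sfone}U$ versus $\sfP^{\emptyset;\sfone}\sfQ^{\emptyset;\sfone}U$ in the cases $\sfQ = \sfG^\sett{1}$, $\sfP = \pG$, whose structures visibly do not coincide).
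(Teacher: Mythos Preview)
Your proposal is correct and follows exactly the approach the paper intends: the paper's proof is the single line ``Same arguments as in the proof of Theorem \ref{th:catrules}'', and you have faithfully unpacked those arguments with the outer copy $\sfQ^\sett{2}$ replaced by an arbitrary cat-rule. Your identification of pullback-preservation as the only substantive step matches the emphasis in the proof of Theorem \ref{th:catrules}, and your added remarks on functoriality, pullback-preservation of the composite, and the example witnessing $\sfQ\circ\sfP \neq \sfP\circ\sfQ$ are welcome elaborations that the paper leaves implicit.
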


\begin{proof}
Same arguments as in the proof of Theorem \ref{th:catrules}.
\end{proof}

\begin{example}
The cat rules 
$\sfQ = \sfG$ and $\sfP = \sfA$ (for $S=K=\K$) cannot be composed  since $\sfG$ does not preserve the required equivariance
properties (we cannot define the common ccat  $\ul{\rm C}$).
On the other hand, $\sfG^{\ol \sfone}$ is a doublecat rule, which in effect does what a hypothetical composition of $\sfG$ and $\sfA$ 
should do. 
\end{example}

%%%%%%%%%%%%%%%%%%%%%%%%%
\section{The $n$-fold pair groupoid}\label{app:pg}
%%%%%%%%%%%%%%%%%%%%%%%%%

Let $M$ be a set. Recall that the {\em pair groupoid} $\pG^\sfone M$ of $M$ is given by the two projections
$M \times M \downdownarrows M$, unit section the diagonal map $M \to M\times M$ and product
$(x,y) \ast (y,z) = (x,z)$ and inverse $(x,y)^{-1}=(y,x)$.
Every set-map $f:M \to M'$ induces a groupoid morphism
$\pG^\sfone f = (f \times f,f)$, and  $\pG^\sfone$ is a cat-rule (even, a groupoid-rule).
Let $\pG^\sett{k}$ be a copy of $\pG^\sfone$, ``of $k$-th generation''.
It follows that $\pG^\sftwo M:= \pG^\sett{2} (\pG^\sfone M)$ is a double groupoid (see also \cite{Ko87}), etc.:
\begin{equation}
\pG^\sfn M := \pG^\sett{n} (\pG^\sett{1,\ldots,n-1} M )
\end{equation}
is an $n$-fold groupoid, called the {\em $n$-fold pair groupoid}. 
Let us describe it in terms of hypercubes, as in Theorem \ref{th:nfoldcat}:

\begin{theorem}
Let $M$ be a set.  For every $\beta \in \cP(\sfn)$, let $M_\beta$ be a formal copy of $M$, 
and define the vertex sets, for $\alpha \in \cP(N)$,
\begin{equation}
\pG^{\alpha,\sfn} M:=  \prod_{\beta \in \cP(\alpha)} \,  M_\beta \cong M^{2^{\ell_\alpha}} .
\end{equation}
Let $(\beta,\alpha)=(\beta,\beta \cup \{ i \})$ be an edge, where $i \notin \beta$.
The two projections corresponding to this edge are defined by
\begin{equation}
\pi_0^{\beta,\alpha} ((x_\gamma)_{\gamma \subset \alpha}) :=
(x_\gamma)_{\gamma \subset \beta}, \qquad
\pi_1^{\beta,\alpha} ((x_{\gamma} )_{\gamma \subset \alpha}) :=
(x_{\gamma \cup \{ i\} })_{\gamma \subset \beta}
\end{equation}
the zero sections are the ``diagonal imbedding''
$(x_\gamma) \mapsto (x_\gamma,x_\gamma)$,
and the product corresponding to this edge is defined for $x,y$ such that $x_{\gamma \cup \{ i\}} = y_\gamma$
(for $\gamma \subset \beta$), by
\begin{equation}
(x_\gamma) \ast_{\alpha,\beta} (y_\gamma)=(z_\gamma), \quad
z_\gamma = \Bigl\{
\begin{matrix}
x_\gamma & \rm{if} & i \notin \gamma
\cr
y_\gamma & \rm{if} & i \in \gamma .
\end{matrix}
\end{equation}
These data define an edge-symmetric $n$-fold groupoid, naturally isomorphic to the inductively defined $\pG^\sfn M$.
\end{theorem}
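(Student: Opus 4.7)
The plan is to prove the theorem by induction on $n$. For the base case $n=1$, I would verify directly: $\cP(\emptyset)=\{\emptyset\}$ gives $\pG^{\emptyset,\sfone}M = M_\emptyset \cong M$, and $\cP(\{1\})=\{\emptyset,\{1\}\}$ gives $\pG^{\{1\},\sfone}M = M_\emptyset\times M_{\{1\}}\cong M\times M$. The projections in the theorem pick out the $\emptyset$-coordinate and the $\{1\}$-coordinate, matching source and target of the standard pair groupoid, and the composition $(x_\emptyset,x_{\{1\}}) \ast (y_\emptyset,y_{\{1\}}) = (x_\emptyset,y_{\{1\}})$ subject to $x_{\{1\}}=y_\emptyset$ is exactly $(a,b)\ast(b,c)=(a,c)$ after relabeling.

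For the inductive step, assume the description of $\pG^{\mathsf{n-1}}M$ holds. By definition $\pG^\sfn M = \pG^{\sett{n}}(\pG^{\mathsf{n-1}}M)$, so I would first analyze what applying the pair-groupoid cat-rule $\pG^{\sett{n}}$ does vertex by vertex: for an old vertex $\alpha\subset\mathsf{n-1}$ the vertex set is unchanged, while for a new vertex $\alpha\cup\{n\}$ it is doubled to $\pG^{\alpha,\mathsf{n-1}}M\times \pG^{\alpha,\mathsf{n-1}}M$. The key combinatorial identification is the bijection
\begin{equation*}
\cP(\alpha\cup\{n\}) \;=\; \cP(\alpha) \;\sqcup\; \{\gamma\cup\{n\}\mid \gamma\in\cP(\alpha)\},
\end{equation*}
which yields $\prod_{\gamma\subset\alpha\cup\{n\}} M_\gamma \cong \prod_{\gamma\subset\alpha}M_\gamma \times \prod_{\gamma\subset\alpha}M_{\gamma\cup\{n\}}$. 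Combined with the inductive hypothesis, this shows the vertex-set formula is correct in both cases.

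Next I would verify the structure maps in three cases, following the trichotomy of edges from Appendix~\ref{app:hypercube}. For old edges the maps are inherited unchanged and the induction gives the formulas; for new edges $(\beta,\beta\cup\{n\})$ the two projections are precisely the two factor projections of the pair groupoid structure on $\pG^{\beta,\mathsf{n-1}}M\times\pG^{\beta,\mathsf{n-1}}M$, matching $\pi_0,\pi_1$ after relabeling indices $\gamma$ versus $\gamma\cup\{n\}$; for copies of old edges $(\beta\cup\{n\},\alpha\cup\{n\})$, the cat-rule $\pG^{\sett{n}}$ acts componentwise on the two factors, so the projections and products are diagonal copies of the old ones, which under the bijection above become exactly the stated component-wise formulas. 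The face condition for every face follows automatically since $\pG^{\sett{n}}$ is a functor and hence carries the small-cat structure of each old edge to a morphism of small cats; verifying new faces (those involving one new edge and one old one) reduces to checking that source/target and composition in the pair groupoid commute with the functor's action, which is tautological.

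For edge-symmetry, since the explicit formula depends only on the poset structure of $\cP(\alpha)$ via $\alpha\mapsto\prod_{\beta\subset\alpha}M_\beta$ and uses no ordering of $\sfn$, any $\tau\in S_n$ acts by relabeling indices $\beta\mapsto\tau(\beta)$, giving an isomorphism $\pG^\sfn M\to (\pG^\sfn M)^\tau$ satisfying $f_{\tau\nu}=f_\tau f_\nu$. The main obstacle I expect is purely bookkeeping: keeping the canonical bijection $\cP(\alpha\cup\{n\})\leftrightarrow \cP(\alpha)\sqcup(\cP(\alpha)+\{n\})$ consistent across the description of projections and compositions, especially distinguishing copies-of-old from new edges; however, once this indexing convention is fixed at the start of the inductive step, the rest of the verification is mechanical.
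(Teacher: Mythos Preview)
Your proposal is correct and follows essentially the same approach as the paper: induction on $n$, verifying the base case $n=1$ directly against the usual pair groupoid, then matching the inductive definition $\pG^{\sett{n}}(\pG^{\mathsf{n-1}}M)$ to the explicit vertex-set and edge formulas, with edge-symmetry coming from the $S_n$-action by relabeling indices. Your version is considerably more detailed than the paper's (which handles the inductive step with ``Similarly for the general induction step'' after writing out $n=2$), in particular your use of the old/new/copy-of-old edge trichotomy and the explicit bijection $\cP(\alpha\cup\{n\})\cong\cP(\alpha)\sqcup(\cP(\alpha)+\{n\})$ makes the bookkeeping transparent; but the underlying argument is the same.
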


\begin{proof}
For $n=1$, 
$\pi_0^{\emptyset,\sfone}(x_\emptyset,x_1)= x_\emptyset$,
$\pi_1^{\emptyset,\sfone}(x_\emptyset,x_1)=x_1$,
$(x_\emptyset,x_1)\ast (y_\emptyset,y_1)=(x_\emptyset,y_1)$
whenever $y_\emptyset = x_1$, describing the pair groupoid. 
For $n=2$, we have the following source and target projections, 
$$
\begin{matrix}
M^4 & \rightrightarrows & M^2 \cr \downdownarrows & & \downdownarrows \cr
M^2 & \rightrightarrows & M 
\end{matrix},
\qquad 
\begin{matrix}
(x_0,x_1,x_2,x_{12} )& \mapsto & (x_0,x_2) \cr \downarrow & \pi_0 & \downarrow \cr
(x_0,x_1) & \mapsto & x_0 
\end{matrix}
\qquad
\begin{matrix}
(x_0,x_1,x_2,x_{12}) & \mapsto & (x_1,x_{12}) \cr \downarrow & \pi_1 & \downarrow \cr
(x_2,x_{12}) & \mapsto & x_{12}
\end{matrix}
$$
which match precisely the data of $\pG^\sett{2} \pG^\sfone M$. 
Similarly for the general induction step. 
Edge-symmetry is directly seen for $n=2$ from the formulae above; more formally,
whenever $\alpha$ and $\alpha'$ have same cardinality and $\tau:\alpha \to \alpha'$ is a bijection,
there is an induced bijection $f_\tau: M_\alpha \to M_{\alpha'}$, which together assemble to  define an isomorphism.
\end{proof}

Note also that
the $2^n$ top-down projections of $\pG^\sfn M$ are precisely the $2^n$ projections of $M^{2^n}$ to $M$, and that every
 map $f:M \to M'$ induces a morphism of $n$-fold groupoids
$f^\sfn : \pG^\sfn(M) \to \pG^\sfn(M')$, such that the symbol $\pG^\sfn$ can be seen as an
$n$-fold groupoid rule on $\ul{\rm Set}$.

\begin{theorem}
For every small $n$-fold cat $C = (C_\alpha)_{\alpha \in \cP(\sfn)}$, the {\em iterated anchor map}
$$
C \to \pG^\sfn (C_\emptyset)
$$
is morphism of small $n$-fold cats. On $C_\alpha$, this map is given by associating to $x \in C_\alpha$
the family $(\xi_\gamma(x))_{\gamma \in \cP(\alpha)}$ where $\xi_\gamma$ is the $\gamma$-top-down projection 
(Definition \ref{def:topdown}).
\end{theorem}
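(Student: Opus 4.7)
The plan is to reduce the problem, via Theorem \ref{th:nfoldcat}, to a per-edge verification. Setting $M := C_\emptyset$, I would define
\[
\Phi_\alpha : C_\alpha \to M^{\cP(\alpha)} = \pG^{\alpha;\sfn}(M), \quad x \mapsto (\xi_\gamma(x))_{\gamma \in \cP(\alpha)} .
\]
To show that the $\Phi_\alpha$ assemble into a morphism of $n$-fold cats, it suffices to check, for every edge $(\beta,\alpha;\sfn)$ with $\alpha = \beta \cup \{i\}$, that the pair $(\Phi_\beta,\Phi_\alpha)$ is a morphism from the edge cat of $C$ to the corresponding edge pair groupoid. This reduces to four compatibilities (source, target, unit section, composition), all of which hinge on a single preliminary observation.

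The preliminary observation is that each $\xi_\gamma$ may be computed via any chain of edge projections from $\alpha$ down to $\emptyset$, provided that the source/target choice at each step is dictated by whether the element removed lies in $\gamma$; in particular, one is free to pull the projection in direction $i$ to the front. This independence is an iterated application of the interchange law built into the face condition of Theorem \ref{th:nfoldcat}: any two adjacent steps in such a chain lie on a common face and hence commute by the double cat axioms. Once granted, it yields the identities
\[
\xi_\gamma = \xi_\gamma^\beta \circ \pi_0^{\beta,\alpha;\sfn} \enspace (i \notin \gamma), \qquad
\xi_\gamma = \xi_{\gamma \setminus \{i\}}^\beta \circ \pi_1^{\beta,\alpha;\sfn} \enspace (i \in \gamma),
\]
where $\xi^\beta$ denotes the analogous top-down projection on $C_\beta$. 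Source and target compatibility of $\Phi$ are then immediate: the pair groupoid source (resp.\ target) keeps exactly the coordinates with $i \notin \gamma$ (resp.\ $i \in \gamma$), and the identities above match these to $\Phi_\beta \circ \pi_0$ (resp.\ $\Phi_\beta \circ \pi_1$). Unit section compatibility follows from $\pi_\sigma \circ z = \id$, which gives $\xi_\gamma(z(x)) = \xi_{\gamma \cap \beta}^\beta(x)$ for all $\gamma \in \cP(\alpha)$, namely the diagonal embedding of $\Phi_\beta(x)$. For composition, taking $x,y \in C_\alpha$ which are $\ast^{\beta,\alpha;\sfn}$-composable and invoking the cat axioms $\pi_0(x \ast y) = \pi_0(x)$ and $\pi_1(x \ast y) = \pi_1(y)$, combined with the identities above, yields $\xi_\gamma(x \ast y) = \xi_\gamma(x)$ when $i \notin \gamma$ and $\xi_\gamma(x \ast y) = \xi_\gamma(y)$ when $i \in \gamma$ — exactly the pair groupoid composition rule of Appendix \ref{app:pg}.

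I expect the main, and essentially only non-trivial, step to be the preliminary commutation observation: a clean proof of it requires an auxiliary induction on the number of adjacent-transposition moves needed to bring the $i$-projection to the front of the chain, each such transposition being justified by the face condition for the face swept out by the two steps being swapped. As an alternative, one may dispense with the commutation observation entirely and prove the whole theorem by induction on $n$: the base case $n=1$ is the standard fact that the anchor of any small cat is a morphism to its pair groupoid, and the induction step uses the description of $C$ as a small cat of type $\ul{\rm Cat}_{n-1}$ in the $n$-th direction, mirroring the inductive definition $\pG^\sfn = \pG^{\{n\}} \circ \pG^{\mathsf{n-1}}$.
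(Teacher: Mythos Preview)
Your proposal is correct. The paper's own proof is the terse induction on $n$ that you mention as your alternative at the end: the base case $n=1$ is the ordinary anchor map, and for $n>1$ the paper simply says ``the claim follows by induction'', relying on $\pG^\sfn = \pG^{\{n\}} \circ \pG^{\mathsf{n-1}}$ and the description of $C$ as a cat of type $\ul{\rm Cat}_{n-1}$.

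Your main route---direct per-edge verification via Theorem~\ref{th:nfoldcat}, resting on the commutation identity $\xi_\gamma = \xi^\beta_{\gamma\setminus\{i\}} \circ \pi^{\beta,\alpha}_{[i\in\gamma]}$---is a genuinely different and more explicit argument. It unpacks exactly what the induction leaves implicit: the commutation of source/target projections across faces, which is the content of the face condition. The payoff is that you get the explicit description of $\Phi_\alpha$ on every vertex set and a transparent match with the pair-groupoid structure maps of Appendix~\ref{app:pg}, whereas the paper's induction hides this in the functoriality of $\pG^{\{n\}}$. Conversely, the paper's approach is shorter and makes clear that the result is a formal consequence of $\pG$ being a cat-rule, with no separate commutation lemma needed.
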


\begin{proof}
For $n=1$, this map is the usual anchor map $x \mapsto (\pi_0(x),\pi_1(x))$, which is a morphism $C \to \pG^\sfone (C_\emptyset)$.
For $n>1$, the claim follows by induction.
\end{proof}

\begin{definition}
An {\em $n$-fold equivalence relation on $M$} is a subgroupoid of $\pG^\sfn M$.
\end{definition}

\nin
For $n=1$, we get a usual equivalence relation, and for $n=2$, we get a 
{\em double equivalence relation} as defined in  \cite{Ko87}.

%Case where $M=U$ is a linear set $(U,V)$: change of variables $y_\alpha := \sum_{\beta \subset \alpha} x_\beta$
% ring extension of $\K$: idempotent ring, see old version: idempotent basis, inclusion-exclusion formula.... 

%%%%%%%%%%%%%%%%%%%%%%%%%%%%%%%%%%
\section{The $n$-fold scaled action category}\label{app:scaledaction}
%%%%%%%%%%%%%%%%%%%%%%%%%%%%%%%%%%%

Let $S$ be a
 commutative monoid, acting (from the left) on a set $K$ (``the scales'', later we will take for $S$ and $K$ a copy of $\K$).
Assume $S$ acts (from the right) on a set $V$, and let $U \subset V$ a non-empty subset.
We define  a small cat $\sfA^\sfone U$ (see Part I, Lemma B.1) by sets
\begin{align}
\sfA^{\emptyset,\sfone} U &:=  U \times K \cr
\sfA^{\sfone,\sfone} U &:= \{ (v,s,t) \in V \times S \times K  \mid \, v \in U, vs  \in U \}  , 
\end{align}
with injection $(v,t) \mapsto (v,1,t)$ and
and projections  $\partial: \sfA^{\sfone,\sfone} U   \downdownarrows \sfA^{\emptyset,\sfone} U$ given by
\begin{equation}
\partial_1(v,s,t)=(vs,t), \qquad
\partial_0(v,s,t)=(v,st)
\end{equation}
and composition law
$(v',s',t') \bullet (v,s,t)= (v,ss',t')$. 

\begin{lemma}With notation as above,
\begin{enumerate}
\item
 every $S$-equivariant map  $f:V \to V'$ defines a morphism
$$
\sfA^\sfone f =(f \times \id_S \times \id_K,f \times \id_K),
$$
\item
let $S$ act on $V \times S \times K$ and on $V \times K$ by:
$$
(v,s,t) . \lambda := (v \lambda , s,t), \qquad
(v,t). \lambda := (v \lambda,t) ,
$$
then both projections $\partial_\sigma$ and  composition $\bullet$ are  equivariant maps.
\end{enumerate}
\end{lemma}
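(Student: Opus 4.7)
The plan is to verify both statements by direct algebraic computation, reading off the required identities from the explicit formulae for $\partial_0$, $\partial_1$, the unit section, and $\bullet$. Nothing here depends on the combinatorics of hypercubes: we are at the very bottom level, where a small cat is given by object set $U \times K$, morphism set $\{(v,s,t) \mid v,vs \in U\}$, and the structure maps as written out just above.

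For part (1), I would check in turn that $(f \times \id_S \times \id_K, f \times \id_K)$ respects source, target, unit section, and composition. Compatibility with $\partial_0(v,s,t) = (v,st)$ and with the unit section $(v,t) \mapsto (v,1,t)$ is immediate, since the $S$- and $K$-factors are untouched: both sides of each diagram read $(f(v),st)$, resp.\ $(f(v),1,t)$. The only nontrivial check is with $\partial_1(v,s,t) = (vs,t)$, which amounts to the single identity $f(vs) = f(v)s$ --- precisely the hypothesis of $S$-equivariance. Once this is in hand, compatibility with $\bullet$ is purely formal: the rule $(v',s',t') \bullet (v,s,t) = (v,ss',t')$ only inserts $f$ into the first slot on both sides, and preservation of composability $\partial_1(v,s,t) = \partial_0(v',s',t')$ follows from the preceding compatibility checks.

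For part (2), I would check equivariance of each structure map separately. For $\partial_0$: no hypothesis is needed, since $\partial_0((v,s,t).\lambda) = (v\lambda, st) = \partial_0(v,s,t).\lambda$. For $\partial_1$: one computes $\partial_1((v,s,t).\lambda) = (v\lambda \cdot s, t)$ and $\partial_1(v,s,t).\lambda = (vs\cdot \lambda, t)$, so equality reduces to $v \cdot(\lambda s) = v \cdot (s\lambda)$, which holds because $S$ is commutative (the hypothesis on $S$ fixed at the outset of the appendix). For $\bullet$: composability of $(v,s,t).\lambda$ with $(v',s',t').\lambda$ follows from the equivariance of $\partial_0, \partial_1$ just established, and both $((v',s',t') \bullet (v,s,t)).\lambda$ and $((v',s',t').\lambda) \bullet ((v,s,t).\lambda)$ evaluate to $(v\lambda, ss', t')$.

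The only points worth flagging are that $S$-equivariance of $f$ enters \emph{only} at the $\partial_1$-check in part (1), and that commutativity of $S$ is genuinely used in the $\partial_1$-equivariance of part (2); everywhere else the identities are formal consequences of the componentwise form of the maps. So there is no real obstacle beyond careful bookkeeping of which argument carries the $S$-action and which does not.
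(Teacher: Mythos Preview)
Your proof is correct and follows essentially the same approach as the paper's own proof, which is also a direct check highlighting precisely the two points you flag: that $S$-equivariance of $f$ is needed exactly for the $\partial_1$-compatibility in (1), and that commutativity of $S$ is what makes $\partial_1$ equivariant in (2). Your write-up is simply a more detailed version of the paper's terse ``Direct check''.
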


\begin{proof} Direct check. As to (1),
equivariance of $f$ is needed to ensure that
$\partial_1 \circ \sfA^\sfone f = (f \times \id) \circ \partial_1$,
and as to (2), equivariance of  $\partial_1$, amounts to
 $v\lambda s = vs \lambda$ for all $s$ and $\lambda$, so here commutativity of $S$ enters.
\end{proof}

\begin{theorem}\label{th:SA}
Let $\sfA^\sett{k}$ be a formal copy of the functor $\sfA^\sfone$ defined above. Then, for any non-empty
subset $U \subset V$, we get a small  edge symmetric $n$-fold cat, defined by induction
$$
\sfA^\sfn U := \sfA^\sett{n} (\sfA^{\mathsf{n-1}} U) =
\sfA^\sett{n} \ldots \sfA^\sett{1} (U).
$$
Its vertex sets, edge projections and injections and composition laws are described as follows:
For any $i \in \N$ let $S_i$ be a formal copy of $S$ and $K_i$ one of $K$.
For $\alpha = \{ \alpha_1,\ldots, \alpha_\ell \} \in \cP(\sfn)$  the vertex sets are
\begin{align*}
\sfA^{\alpha,\sfn} U &=\bigl\{ (x,\sss,\ttt) \in 
 U \times S_{\alpha_1}\times \ldots \times S_{\alpha_\ell} \times K_1 \times \ldots \times K_n \mid
 \cr
 & \qquad \qquad \forall \beta \in \cP(\alpha) : x s_{\beta_1} \cdots s_{\beta_\ell} \in U \bigr\} ,
\end{align*}
and for an edge $(\beta,\alpha)$ with $\alpha = \beta \cup \{ i \}$, the edge projections are
$$
\partial_1 (x,\sss,\ttt ) = (x s_i , (s_k)_{k\in \beta}, \ttt), \qquad
\partial_0(x,\sss,\ttt) = (x, (s_k)_{k\in \beta}, s_i t_i, (t_k)_{k\in \beta}) ;
$$
in other words, each $s_i$ acts on $V$ but
 only on the $i$-th component of $K^n$. The zero section is the natural inclusion, and the composition law is
$$
(x,\sss',\ttt') \bullet (x,\sss,\ttt) = (x, s_i s_i', (s_j)_{j\not= i},  \ttt').
$$
Moreover, any map $f:U \to U'$
satisfying $f(vs)=f(v)s$ whenever $v \in U$, $s \in S$ and $vs \in U$
 gives rise to a morphism $\sfA^\sfn f : \sfA^\sfn U \to \sfA^\sfn U'$.
In particular, there is a canonical morphism $\sfA^\sfn U \to \sfA^\sfn 0$, where
$0$ is the zero $S$-module. 
\end{theorem}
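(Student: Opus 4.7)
The plan is to proceed by induction on $n$, invoking Theorem \ref{th:catrules} once we verify that $\sfA^\sett{1}$ is a cat-rule on the ccat of $S$-sets with a distinguished subset (i.e., pairs $(U,V)$ with $V$ an $S$-set and $U\subset V$, morphisms $S$-equivariant maps sending $U$ into $U'$). The base case $n=1$ is exactly the Lemma above: $\sfA^\sfone U$ is a small cat and $\sfA^\sfone f$ a functor for each equivariant $f$. Next I would check that $\sfA^\sfone$ preserves pullbacks in this ccat. This is essentially a set-theoretic verification: a triple $(v,s,t)$ in the pullback $\sfA^\sfone(A\times_C B)$ amounts to a pair of triples with matching images in $\sfA^\sfone C$, because the $S$-action and the $K$-factor are tacked on diagonally. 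Granted this, Theorem \ref{th:catrules} yields at once that the iterated functor $\sfA^\sfn U$ carries the structure of a small $n$-fold cat.

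First I would then unpack the explicit formulae by induction, using the description of $n$-fold cats via hypercube data (Theorem \ref{th:nfoldcat}). For the vertex sets: at rank $n-1$ the vertex $\sfA^{\alpha;\mathsf{n-1}} U$ carries the indicated form with parameters indexed by $\alpha\subset \mathsf{n-1}$, plus the accumulated $K_i$'s; applying $\sfA^\sett{n}$ either
\emph{adjoins a new factor} $S_n$ and imposes the additional membership conditions in $U$ (if $n\in\alpha$), or else merely tacks on $K_n$ as a product (if $n\notin\alpha$, which corresponds to ``taking a product with $\K$'' in the language of Theorem \ref{th:fullcompute}). This matches the stated description of $\sfA^{\alpha;\sfn} U$ verbatim. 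The formula for $\partial_\sigma$ and $\bullet$ along a ``new'' edge $(\beta,\beta\cup\{n\})$ is the definition of $\sfA^\sett{n}$; along an ``old'' edge the formula is obtained by applying the functor $\sfA^\sett{n}$ to the previous structure maps, which, since those are equivariant set-maps, amounts to $\partial_{\sigma}^{\mathsf{n-1}}\times \id$ on the new $S_n, K_n$ components.

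Next I would verify the face condition for each face $(\gamma,\alpha;\sfn)$ of the hypercube, distinguishing the three kinds of faces (old, copy-of-old, new) of \ref{subsec:talking}. The old and copy-of-old faces inherit their double-cat structure from rank $n-1$ by the inductive hypothesis together with the functoriality of $\sfA^\sett{n}$ (since $\sfA^\sett{n}$ applied to a double cat yields a double cat). For the new faces, the interchange law boils down to the identity
\[
(x(s_i s_i'))s_n = (x s_n)(s_i s_i'),\qquad s_i t_i\cdot 1 = 1\cdot s_i t_i,
\]
that is, to the commutativity of $S$ (and its trivial commutation with $K$). This is the one place where the commutativity hypothesis on $S$ is genuinely used, and it is also what forces edge-symmetry: the component-wise bijections $(x,\sss,\ttt)\mapsto (x,\sss_{\tau},\ttt_{\tau})$ indexed by $\tau\in S_n$ are well-defined precisely because the action factors $s_i$ commute, and they clearly preserve projections, zero sections and $\bullet$. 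The claim about $\sfA^\sfn f$ for equivariant $f$ follows by induction: at rank $1$ it is the Lemma, and $\sfA^\sett{n}$ sends morphisms to morphisms; the canonical morphism $\sfA^\sfn U\to\sfA^\sfn 0$ is then the instance where $f$ is the constant map $v\mapsto 0$, which is trivially equivariant.

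The main obstacle I expect is keeping the bookkeeping of the $S_i$- and $K_i$-factors straight in the inductive step, especially checking that when applying $\sfA^\sett{n}$ to the ``old'' edge projections, the two new factors $S_n$ and $K_n$ are indeed just carried along as identity components (as required for the stated formula); but this is a direct consequence of the pullback-compatibility of $\sfA^\sett{n}$ together with the fact that old projections are already equivariant, so no real difficulty arises beyond notation.
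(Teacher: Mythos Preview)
Your proposal is correct and follows essentially the same approach as the paper: verify that $\sfA$ is a cat-rule (via the preceding Lemma plus a direct pullback check), invoke Theorem~\ref{th:catrules} to obtain the $n$-fold cat, and then establish the explicit formulae and edge-symmetry by induction. Your separate verification of the face condition is redundant once Theorem~\ref{th:catrules} has been applied, but it does no harm and usefully isolates where commutativity of $S$ enters.
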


\begin{proof}
From the preceding lemma, it follows that $\sfA$ is a cat-rule (by direct check, $\sfA$ is compatible with pullbacks), 
and hence by iterating we get an $n$-fold cat. 
The explicit formulae for vertex sets, edge projections and composition laws are proved by induction. 
These formulae show an edge-symmetric structure, when we let act the symmetric group $S_n$ on the indices and
define the maps on the level of vertex sets by the corresponding identification maps. 
For instance, when  $n=2$ and $U=V$, we have a diagram of projections
$$
\begin{matrix}
V \times S_1 \times S_2 \times K_1 \times K_2 & \rightrightarrows  & V \times S_1 \times K_1 \times K_2 \cr
\downdownarrows & & \downdownarrows \cr
V \times S_2 \times K_1 \times K_2 & \rightrightarrows & V \times K_1 \times K_2 ,
\end{matrix}
$$
having the natural symmetry properties. 
\end{proof}

\begin{example}
If $S$ acts trivially on $V$, then $\sfA^\sfn U = U \times \sfA^\sfn 0$ is just a direct product of $U$ with the iterated ``usual'' 
$n$-fold action cat of $S$ on $K$. 
\end{example}

%\begin{example}
%If $S=G$ is a group, acting on itself from the left, so $K=G$, then ... get something looking like the pair groupoid, or rather a subgroupoid ?
%\end{example}

%%%%%%%%%%%%%%%%%%%%%%%%%%%%%%
\section{Pullbacks in first order calculus}\label{app:pullback}
%%%%%%%%%%%%%%%%%%%%%%%%%%%%%%
%cf also: T. Fiore : examples of double cats : Rng = pseudo double cat of rings, bimodules and equivariant maps, see p. 15 in  http://www-personal.umd.umich.edu/~tmfiore/1/fiorefolding.pdf 

In general category theory, {\em pullbacks} are defined by a universal property. In the concrete cat $\ul{\rm Set}$, 
they are given by the following construction (see \cite{MM92}, I.2), which equally applies to the concrete cat of linear sets with
set-maps: 

\begin{definition}\label{def:pullback1}
Assume  $A \subset V_1$, $B \subset V_2$, $C \subset W$ are linear sets and
 $f:A \to C$ and $g:B \to C$ maps. 
The {\em pullback} (in the concrete category of linear sets with set maps),
 $ (A \times_C B, A,B,p_A,p_B,f,g)$, of these data is the linear set defined by 
\begin{align*}
P:=A \times_C B & := \{ (a,b) \in A \times B \mid f(a)=g(b) \}  \subset V_1 \times V_2 ,
\end{align*} 
{\em together} with its two projections $p_A:P \to A$ and $p_B:P \to B$ fitting into the commutative diagram of mappings
\begin{align}
\begin{matrix}
P & \to & B \cr \downarrow & & \downarrow \cr A & \to & C . \end{matrix} 
\end{align}
In the following, we distinguish the {\em set} $P$ and the whole structure of pullback, denoted by $A \times_C B$.
Note that,
if $C=0$ is a point, then $P$ is the usual cartesian product $A \times B$ of sets. 
\end{definition}

However, this definition is not appropriate for the concrete  cat of linear sets with $C^1$-laws. The reason for this is that base maps need not determine the whole law. 
Morally, they do if the domain of definition is ``open''; but, for instance, diagonals $\Delta_V$ in vector spaces $V$ are typically not open in
$V \times V$,
and thus   the set
$(\Delta_V)^\sett{1}$ is ``too big'': we want it to be equal to  $\Delta_{V^\sett{1}}$. In other words, we have ``to derive maps first and then take
set-theoretic pullbacks'', and not the other way round:

\begin{definition}\label{def:pullback}
Recall from Definition \ref{def:ccatC1} the ccat of linear sets with $C^1$-laws.
With notation as above, 
 we {\em define the pullback in the concrete cat of linear sets with $C^1$-laws} to be the set-theoretic pullback 
of $\sfG^\sfone A$ and $\sfG^\sfone C$   via $\sfG^\sfone f$ and $\sfG^\sfone g$, i.e., it is given by the base set
$A \times_C B$ and the extended sets
\begin{equation}\label{eqn:pb!}
\sfG^{\sfone}  (A \times_C B)  := \sfG^{\sfone} A \times_{\sfG^{\sfone} C} \sfG^{\sfone} B . 
\end{equation}
Similar definitions are given for $\sfG^{\ol \sfone}$ and $\sfG^\settt{k}$. In particular,  with $\K_1 := 0^\sett{1}$,
$$
\sfG^{\sfone}  (A \times B) = \sfG^{\sfone} A \times_{\K_1} \sfG^{\sfone} B, \qquad 
\sfG^{\ol \sfone}  (A \times B) = \sfG^{\ol\sfone} A \times_{\K_1 \times \K_1} \sfG^{\ol \sfone} B.
$$
\end{definition} %This pullback has the universal property \url{http://en.wikipedia.org/wiki/Pullback_(category_theory)}.

\nin
The following theorem ensures that the pullbacks thus defined are again groupoids (resp.\ small double cats).

\begin{theorem}\label{la:pullback}
With notation as in Definition \ref{def:pullback1}, assume  $\bff,\bfg$ are $C^1$-laws with base maps $f,g$. Then the
following pair of linear sets is a subgroupoid of the groupoid $\sfG^\sfone P = (P\times \K, \sfG^{\sfone;\sfone} P)$:
$$
 \sfG^{\sfone} A \times_{\sfG^{\sfone} C} \sfG^{\sfone} B =
 \bigl( \sfG^{\emptyset;\sfone} A \times_{\sfG^{\emptyset;\sfone} C} \sfG^{\emptyset;\sfone} B,
 \sfG^{\sfone;\sfone} A \times_{\sfG^{\sfone;\sfone} C} \sfG^{\sfone;\sfone} B \bigr)
$$
A similar statement holds for $\sfG^\sfone$ replaced by $\sfG^{\ol \sfone}$.  
\end{theorem}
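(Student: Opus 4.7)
The strategy is to unpack both sides in affine coordinates, produce a natural set-theoretic inclusion of the two vertex sets of $\sfG^\sfone A \times_{\sfG^\sfone C} \sfG^\sfone B$ into the corresponding vertex sets of $\sfG^\sfone P$, and then verify that all five groupoid structure maps (two projections, zero section, composition, inversion) restrict to this subset. First I would observe that an element of $\sfG^{\sfone;\sfone} A \times_{\sfG^{\sfone;\sfone} C} \sfG^{\sfone;\sfone} B$ is, unwinding the condition $\sfG^\sfone f (a_0,a_1,t_A) = \sfG^\sfone g (b_0,b_1,t_B)$, a quintuple $\bigl((a_0,a_1,t),(b_0,b_1,t)\bigr)$ (with equal scalars $t_A=t_B=t$ forced by the third coordinate) satisfying simultaneously $f(a_0)=g(b_0)$ and $f^{[1]}(a_0,a_1,t) = g^{[1]}(b_0,b_1,t)$. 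The obvious candidate for the inclusion map sends this quintuple to $\bigl((a_0,b_0),(a_1,b_1),t\bigr) \in V_1\times V_2\times V_1\times V_2\times \K$.

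The main verification--and what I expect to be the only nontrivial step--is that this candidate actually lands in $\sfG^{\sfone;\sfone} P$. Here I need two things: $(a_0,b_0) \in P$, which is immediate from $f(a_0)=g(b_0)$, and $(a_0+ta_1,\, b_0+tb_1) \in P$, i.e.\ $f(a_0+ta_1)=g(b_0+tb_1)$. The latter is exactly where the $C^1$-law hypothesis is used: by the defining property of a difference factorizer we have
\[
f(a_0+ta_1)-f(a_0) \;=\; t\,f^{[1]}(a_0,a_1,t), \qquad g(b_0+tb_1)-g(b_0) \;=\; t\,g^{[1]}(b_0,b_1,t),
\]
and the two pullback-equalities in the line above equate the right-hand sides and the subtracted constants, hence equate the left-hand sides as required. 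Conversely, a routine check shows that every element of $\sfG^{\sfone;\sfone} P$ lying over a compatible pair is obtained in this way, so the inclusion identifies the pulled-back set with a well-defined subset of the top vertex of $\sfG^\sfone P$. An analogous and even simpler argument handles the bottom vertex, where only the condition $f(a_0)=g(b_0)$ enters.

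Once the inclusions on both vertex levels are established, closure under the groupoid operations of $\sfG^\sfone P$ is essentially formal. The source projection is just the restriction of the cartesian projection $(a_0,b_0,a_1,b_1,t)\mapsto(a_0,b_0,t)$, whose image obviously lies in the bottom pullback. For the target projection one checks that the pair $\bigl((a_0+ta_1,t),(b_0+tb_1,t)\bigr)$ still satisfies $f(a_0+ta_1)=g(b_0+tb_1)$, which is precisely the identity just proved. The zero section $((a_0,b_0),t)\mapsto((a_0,b_0),(0,0),t)$ is well-defined because any morphism of groupoids $\bff^\sfone,\bfg^\sfone$ sends zeros to zeros, so $f^{[1]}(a_0,0,t)=0=g^{[1]}(b_0,0,t)$ and the pullback condition at the top vertex is trivially met. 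For the composition, two composable elements share their top base points, and the sum $(a_1+a_1',\,b_1+b_1')$ inherits the pullback condition from linearity of the fibrewise addition $\ast$ (which was one of the defining properties of the $C^1$-law in Definition \ref{def:ccatC1}); inverses are handled in the same way.

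Finally, the statement for the $\sfG^{\ol\sfone}$-version follows by the same template, with the additional compatibility with the two ``second-kind'' edges being guaranteed by the hypothesis that $\bff^{\ol\sfone},\bfg^{\ol\sfone}$ are morphisms of double cats: one only needs to remark that the scalar action and the $\bullet$-composition are also given by linear (indeed, multiplicative) expressions in the affine coordinates, so preservation of pullback relations is automatic once it has been checked at the level of the four vertex sets. I expect no conceptual surprises here; the only genuine content of the theorem is the one-line verification using the difference-factorizer identity.
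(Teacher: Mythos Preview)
Your approach is essentially the same as the paper's: establish the inclusion $Q:=A^\sett{1}\times_{C^\sett{1}}B^\sett{1}\subset P^\sett{1}$ via the difference-factorizer identity, then check closure under the groupoid operations using that $\bff^\sfone,\bfg^\sfone$ are groupoid morphisms. The paper's verification is a bit more compressed (it focuses on $\ast$ and dismisses units and inversion as obvious), but the content is the same.

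One remark: your ``Conversely'' sentence is either wrong or confused. The inclusion $Q\subset P^\sett{1}$ is \emph{strict} in general: an element $((a_0,b_0),(a_1,b_1),t)\in P^\sett{1}$ only satisfies $t\cdot f^{[1]}(a_0,a_1,t)=t\cdot g^{[1]}(b_0,b_1,t)$, and when $t$ is a zero-divisor (in particular $t=0$) this does not force $f^{[1]}(a_0,a_1,t)=g^{[1]}(b_0,b_1,t)$. The paper explicitly notes this in the example following the proof. Fortunately you never use the converse direction---the theorem only asserts a subgroupoid, not an equality---so simply delete that sentence and the argument stands.
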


\begin{proof}
On the level of object sets, both groupoids from the claim coincide, by using the natural identification
$(A \times \K) \times_{(C \times \K)} (B \times \K) = (A \times_C B) \times \K$. 
On the level of morphism sets, 
let us show that $(\sfG^{\sfone;\sfone} A \times_{\sfG^{\sfone;\sfone} C} \sfG^{\sfone;\sfone} B ) \subset \sfG^{\sfone;\sfone} P$. 
On the one hand,
\begin{align*}
P^\sett{1} &=
\Bigsetof{ ((a,b),(u,v),t))} {\begin{array}{c} a\in A, b \in B, u \in V_1, v \in V_2 ,t \in \K :
 a+ut \in A, b+tv \in B, \\ f(a) = g(b), \quad
f(a+ut)=g(b+vt) 
\end{array}}
\cr
& =
\Bigsetof{ ((a,b),(u,v),t))} {\begin{array}{c} a\in A, b \in B, u \in V_1, v \in V_2,t \in \K : 
a+ut \in A, b+vt \in B, \\ f(a) = g(b), \quad
f^{[1]}(a,u,t) \, t  =  g^{[1]}(b,v,t) \,  t  
\end{array}} .
\end{align*}
(For the last equality, note that $f(a+ut)=f(a) +  f^{[1]}(a,u,t) \, t$. Note also that the set $P^\sett{1}$ depends only on the base
maps of $\bff$ and $\bfg$.) 
On the other hand, using the maps $f^\sett{1}:A^\sett{1}\to C^\sett{1}$ and $g^\sett{1}:B^\sett{1} \to C^\sett{1}$:
\begin{align*}
Q & := A^\sett{1} \times_{C^\sett{1}} B^\sett{1}
\cr &  =
\Bigsetof{ ((a,u,t),(b,v,s))} {\begin{array}{c}  (a,u,t) \in A^\sett{1}, \, (b,v,s) \in B^\sett{1} :
 \\  f^\sett{1}(a,u,t) = g^\sett{1}(b,v,s) 
\end{array}}
\cr
& = \Bigsetof{ ((a,u,t),(b,v,t))} {\begin{array}{c} a\in A, b \in B, u \in V_1, v \in V_2, t\in \K: a+tu \in A, b+tv \in B,   \\ 
f(a) = g(b),  \quad f^{[1]}(a,u,t) = g^{[1]}(b,v,t) 
\end{array}} 
\end{align*}
(and this set set does not only depend on the base maps).
Since, obviously, 
$f^{[1]}(a,u,t) = g^{[1]}(b,v,t) \Rightarrow f^{[1]}(a,u,t) \, t= g^{[1]}(b,v,t) \, t$,
we have $Q \subset P^\sett{1}$.
Now let us show that $Q$ is stable under the law $\ast$ of $P^\sett{1}$.
Write $(a,u;b,v;t)$ for the element $((a,u,t),(b,v,t)) \in Q$, so
$f^{[1]}(a,u,t) = g^{[1]}(a,u,t)$, etc.; then, for $a'=a+ut$, $b'=b+vt$, 
\begin{align*}
(a',u';b',v';t) \ast (a,u;b,v;t) &=
(a,u'+u;b,v'+v;t)  
\end{align*}
satisfies, since $f^\sett{1}$ and $g^\sett{1}$ are groupoid morphisms,
\begin{align*}
f^{[1]}(a,u'+u,t) & = f^{[1]}(a',u',t) + f^{[1]}(a,u,t) \cr
 &=  g^{[1]}(a',u',t) + g^{[1]}(a,u,t) = g^{[1]}(a,u'+u,t) ,
\end{align*}
and similarly for $(b,v'+v',t)$. The description of $Q$ given above shows now that $(a',u';b',v';t) \ast (a,u;b,v;t) \in Q$.
Obviously, $Q$ contains the units and is stable under inversion, hence  $Q$ is a subgroupoid. 
This proves the first claim, and the final claim follows from this by a straightforward computation, using that, by definition,
$U^\settt{1} = \{ (x,v,s,t) \in V^2 \times \K^2 \mid \, (x,v,st) \in U^\sett{1} \}$.
\end{proof}

\begin{example} In general, the inclusion is strict. However, if
$C =0$ is a point, so $f:A \to 0$ and $g:B \to 0$ are constant and
$A \times_C B = A \times B$, we get  with $C^\sett{1} \cong \K$,
$$
(A \times B)^\sett{1} = (A \times_C B)^\sett{1} \supset A^\sett{1} \times_\K B^\sett{1},
$$
and in this case the inclusion is an equality (see  Theorem 2.10 in  %\ref{th:Manifold-one}, 
Part I). 
One may interprete this by saying that, ``when $C=0$, the diagonal is  open in $C \times C$''.
\end{example}

\end{document}